\newtheorem{thm}{Theorem}[section]
\newtheorem*{thm*}{Theorem}
\newtheorem*{prop*}{Proposition}
\newtheorem*{corol*}{Corollary}
\newtheorem{prop}[thm]{Proposition}
\newtheorem{lemma}[thm]{Lemma}
\newtheorem*{claim*}{Claim}
\theoremstyle{definition}
\newtheorem{defi}[thm]{Definition}
\newtheorem*{defi*}{Definition}
\newtheorem{remark}[thm]{Remark}
\newtheorem{question}[thm]{Question}
\newtheorem{example}[thm]{Example}
\newtheorem*{remark*}{Remark}
\newtheorem*{example*}{Examples}
\theoremstyle{plain}
\newtheorem*{namedthm}{\namedthmname}
\newcounter{namedthm}
\newenvironment{named}[1]
{\def\namedthmname{#1}%
	\refstepcounter{namedthm}%
	\namedthm\def\@currentlabel{#1}}
{\endnamedthm}
\title{Boundary dynamics in unbounded Fatou components}
\title{Boundary dynamics in unbounded Fatou components}
\author[1]{Anna Jov\'e\thanks{This work is supported by the Spanish government grant FPI PRE2021-097372. Corresponding author. \url{ajovecam7@alumnes.ub.edu}.}}
\author[1,2]{N\'uria Fagella\thanks{This work is supported by the (a) Spanish State Research Agency, through the Severo Ochoa and María de Maeztu Program for Centers and Units of Excellence in R\&D (CEX2020-001084-M, and PID2020-118281GB-C32; (b) Generalitat de Catalunya through the grants 2017SGR1374 and ICREA Academia 2020.}}
\affil[1]{\small Departament de Matemàtiques i Informàtica, Universitat de Barcelona, Barcelona, Spain}
\affil[2]{\small Centre de Recerca Matemàtica, Barcelona, Spain}
\begin{document}
\maketitle
\begin{abstract}
	We study the behaviour of a transcendental entire map $ f\colon \mathbb{C}\to\mathbb{C} $ on an unbounded invariant Fatou component $ U $, assuming
	that infinity is accessible from $ U $. It is well-known that $ U $ is simply connected. Hence, by means of a Riemann map $ \varphi\colon\mathbb{D}\to U $ and the associated inner function $ g\coloneqq \varphi^{-1}\circ f\circ\varphi $, the boundary of $ U $ is described topologically in terms of the disjoint union of clusters sets, each of them consisting of one or two connected components in $ \mathbb{C} $, improving the results in \cite{BakerDominguez99, Bargmann}. 
	
	Moreover, under mild assumptions on the location of singular values in $U$ (allowing them even to accumulate at infinity, as long as they accumulate through accesses to $ \infty $), we show that periodic and escaping boundary points are dense in $ \partial U $, and that all  periodic boundary points accessible from $ U $. Finally, under similar conditions, the set of singularities of $ g $ is shown to have zero Lebesgue measure,  strengthening substantially the results in \cite{efjs,FatousAssociates}.
\end{abstract}

\section{Introduction}
Consider a transcendental entire function $ f\colon\mathbb{C}\to\mathbb{C} $ and denote by $ \left\lbrace f^n\right\rbrace _{n\in\mathbb{N}} $ its iterates, which generate a discrete dynamical system in $ \mathbb{C} $. Then, the complex plane is divided into two totally invariant sets: the {\em Fatou set} $ \mathcal{F}(f) $, defined to be the set of points $ z\in\mathbb{C} $ such that $ \left\lbrace f^n\right\rbrace _{n\in\mathbb{N}} $ forms a normal family in some neighbourhood of $ z$; and the {\em Julia set} $ \mathcal{J}(f) $, its complement. For background on the iteration of entire functions see e.g. \cite{bergweiler}.

The Fatou set is open and consists typically of infinitely many connected components, called {\em Fatou components}. Due to the invariance of the Fatou and the Julia sets, Fatou components are periodic, preperiodic or wandering. Replacing $ f $ by $ f^n $, the study of periodic Fatou components can be reduced to the study of the invariant ones. Such Fatou components are always simply connected \cite{baker1984}, and are classified according to their internal dynamics into \textit{Siegel disks}, \textit{attracting} and \textit{parabolic basins}, and \textit{Baker domains}, being the latter exclusive of transcendental functions. Unlike Siegel disks or basins, for which there is a well-defined normal form in a neighbourhood of the convergence point,  Baker domains do not come with a unique asymptotic expression. Indeed, this leads to a classification of Baker domains into three types: \textit{doubly parabolic}, \textit{simply parabolic} and \textit{hyperbolic}, whose normal forms are, respectively, $ \textrm{id}_\mathbb{C}+1 $, $ \textrm{id}_\mathbb{H}\pm1 $ and $ \lambda \textrm{id}_\mathbb{H} $, for some $ \lambda>0 $ \cite{konig}. See also \cite{FagellaHenriksen}.

We aim to study the boundary of unbounded invariant Fatou components for transcendental entire functions, from a topological and dynamical point of view. 
While the classification of periodic Fatou components, and their internal dynamics, are known since the begining of the twentieth century from the works of Fatou and Julia, the study of the dynamics of $ f $ when restricted to the boundary of such invariant sets is much more recent. Indeed,  the seminal paper of Doering and Mañé \cite{DoeringMané1991},  initiated the study of the ergodic properties on the boundary of invariant Fatou components of rational maps. In this context, the work in  \cite{PrzytyckiZdunik_DensityPeriodicSources} should be highlighted, where it is proven that periodic points are dense in the boundary of basins of all rational maps. We remark that a crucial point in the proofs of their results is the fact that rational maps have a finite number of singular values, i.e. singularities of the inverse map.
The work of Doering and Ma\~{n}\'e  was partially extended to the transcendental setting in \cite{RipponStallard_UnivalentBakerDomains,bfjk-escaping}, while the topology of the boundaries has been widely studied in \cite{baker-weinreich, BakerDominguez99, Bargmann,bfjk-accesses}.

A standard approach to study invariant Fatou components is to conjugate the dynamics in the Fatou component to the dynamics of a self-map of the unit disk $ \mathbb{D} $. Indeed, 
since $ U $  is simply connected, one can consider a Riemann map $ \varphi\colon\mathbb{D}\to U $, and the  function \[ g\colon\mathbb{D}\to\mathbb{D} ,\hspace{0.5cm}  g\coloneqq \varphi^{-1}\circ f\circ \varphi . \] The map $ g $ is called an \textit{associated inner function} to $ f|_{U} $, and it is unique up to conjugacy with automorphisms of $ \mathbb{D} $. 

As we shall see throughout the paper, associated inner functions play an important role in the study of $ U $, not only to describe the dynamics of $ f|_{U} $, but also to study the topology  and the dynamics  on the boundary. However, some considerations have to be made since neither $ \varphi $ nor $ g $ need to be defined in $ \partial \mathbb{D} $ (if, for example, the degree of $ g $ is infinite, or the boundary of $ U $ is not locally connected). Details are given in Section \ref{sect-inner-function}. 

In particular, we consider the dynamical system induced in $ \partial\mathbb{D} $ by the inner function $ g $ in the sense of radial limits, which we denote by $ g^*\colon\partial\mathbb{D}\to\partial\mathbb{D} $. For our approach, the ergodic properties of  $ g^*\colon\partial\mathbb{D}\to\partial\mathbb{D} $ become crucial, and depend only on the type of Fatou component considered.  This leads to the following definition (see Def. \ref{def-erg-rec} for ergodicity and recurrence).

\begin{defi*}{\bf (Ergodic and recurrent Fatou components)}
	Let $ f\colon\mathbb{C}\to \mathbb{C} $ be a transcendental entire function. Let $ U $ be an invariant Fatou component for $ f $, and let $ g $ be an associated inner function. We say that $ U $ is an {\em ergodic} (resp. {\em recurrent}) if $ g^*\colon\partial\mathbb{D}\to\partial\mathbb{D} $ is ergodic (resp. recurrent).
\end{defi*}

As detailed in Section \ref{sect-boundary-dyn-FC}, Siegel disks, attracting and parabolic basins are both ergodic and recurrent. Hyperbolic and simply parabolic Baker domains are never ergodic nor recurrent. Doubly parabolic Baker domains are always ergodic, but they may be recurrent or not. Properties of the boundary of Fatou components (both from the topological and the dynamical point of view) depend essentially more on this ergodic classification, rather than on the precise type of Fatou component.

The goal of this paper is to give an accurate description of the boundary dynamics of $ f $ on unbounded invariant Fatou components for transcendental entire functions, using as main tools the ergodic properties of the associated inner function and the distribution of postsingular values. 

The standing hypotheses throughout the article are that $ f $ is a transcendental entire function, and  $ U $ is an invariant Fatou component, for which $ \infty $ is an accessible boundary point, i.e. we assume there exists a curve $ \gamma\colon \left[ 0,1\right) \to U $, with $ \gamma(t)\to\infty $, as $ t\to 1^{-} $. In the sequel, we denote by $ \partial U $ the boundary of $ U $ considered in $ \mathbb{C} $.

The objectives  of this paper are threefold. First, we aim to give a topological description of the boundary of $U $ from the point of view of its Riemann map; second, we wish to give analytical properties for the associated inner function $ g $; and last, we  wish to explore the existence, density and accessibility of periodic points and {\em escaping points} in $ \partial U $, i.e. points that converge to $ \infty $ under iteration.  

Next, we proceed to explain these three aspects in more detail and state our main results, together with a brief account of the state of the art. Due to the somewhat technical nature of the results when stated in the complete generality, most of them are presented in a simplified form understandable by a more general audience, and later stated more precisely in the corresponding section. 

\subsection*{Topology of the boundary of unbounded invariant Fatou components}

In the setting described above, understanding the boundary behaviour of the Riemann map $ \varphi\colon\mathbb{D}\to U $  is used not only to study the dynamics of $ f $ on the boundary, but also to describe the topology of $ \partial U $. We note that, \textit{a priori}, continuity of the Riemann map in $ \overline{\mathbb{D}} $ cannot be assumed, since the continuous extension only exists if $\partial U$ is locally connected, something impossible if, for example, $U$ is unbounded and it is not a univalent Baker domain \cite{baker-weinreich}.  Hence, given a point  $ e^{i\theta}\in\partial\mathbb{D} $, we shall work with its radial limit \[\varphi^* (e^{i\theta} )\coloneqq\lim\limits_{r\to 1^-}\varphi(re^{i\theta})\]  (if it exists), and its cluster set   \[Cl(\varphi, e^{i\theta})\coloneqq\left\lbrace w\in\widehat{\mathbb{C}}\colon \textrm{ there exists }\left\lbrace z_n\right\rbrace _n\subset \mathbb{D}\textrm{ with } z_n\to e^{i\theta}\textrm{ and } \varphi(z_n)\to w\right\rbrace \] (see Sect. \ref{subsect-Riemann} and Def. \ref{defi-radial-lim}). We prove the following.

\begin{named}{Theorem A}\label{teo:A} {\bf (Topological structure of  $\partial  U $)} 
	Let $ f $ be a transcendental entire function, and let $ U $ be  an invariant Fatou component, such that $ \infty $ is accessible from $ U $. Assume  $ U $ is ergodic. Let $ \varphi\colon\mathbb{D}\to U $ be a Riemann map. Then, $ \partial U $ is the disjoint union of cluster sets $ Cl(\varphi, \cdot)$ of $ \varphi $ in $ \mathbb{C} $, i.e.
	\[\partial U= \bigsqcup\limits_{\tiny e^{i\theta}\in\partial \mathbb{D}} Cl(\varphi, e^{i\theta})\cap\mathbb{C}.\]

	\noindent Moreover,  either $ Cl(\varphi, e^{i\theta})\cap\mathbb{C}  $ is  empty, or has at most two connected components.  If $ Cl(\varphi, e^{i\theta})\cap\mathbb{C}  $ is disconnected, then $ \varphi^*(e^{i\theta})=\infty $.
\end{named}

Observe that this is quite a strong property. For example, there cannot be points in $ \partial U $ with more than one access from $ U $, since they would belong to the cluster set of at least two points in $\partial \mathbb{D} $ (for the definition of access, see Sect. \ref{subsect-Riemann}).

We shall see that Theorem A plays an important role in the proofs of the main dynamical  results in this paper, but it also has some interesting more direct consequences, like for example the following generalization of
the result of Bargmann \cite[Corol. 3.15]{Bargmann}, which states that the boundary of a Siegel disk cannot have accessible periodic points.

\begin{named}{Corollary B}\label{corol:B} {\bf (Periodic points in Siegel disks)} 
	Let $ f $ be a transcendental entire function, and let $ U $ be  a Siegel disk, such that $ \infty $ is accessible from $ U $. Then, there are no periodic points in $ \partial U $.
\end{named}

Theorem A improves the understanding of the topology of the boundary of unbounded Fatou components for transcendental entire functions, initiated by the work of
 Devaney and Golberg \cite{DevaneyGoldberg}, on  the completely invariant attracting basin $ U_\lambda $ of $ E_\lambda(z)\coloneqq \lambda e^z $, with $ 0<\lambda<\frac{1}{e} $.  It was shown, on the one hand, that points $ e^{i\theta}\in\partial \mathbb{D} $ such that $ \varphi_\lambda^*(e^{i\theta})=\infty $ are dense in $ \partial \mathbb{D} $; and, on the other hand, that each cluster set $ Cl(\varphi_\lambda, e^{i\theta}) $ is either equal to $ \left\lbrace \infty\right\rbrace  $ or it consists of an unbounded curve landing at a finite accessible endpoint (for the definitions of cluster set and landing point, see Sect. \ref{subsect-Riemann}). This result was generalized to totally invariant attracting basins of transcendental entire function $ f $, with connected Fatou set \cite{baranski-karpinska}. We note that both in \cite{DevaneyGoldberg} and in \cite{baranski-karpinska}, symbolic dynamics (and tracts) play an important role in their proofs, which depend essentially on the class of functions they consider, and it does not lead to an obvious generalization to arbitrary Fatou components.

In a more general setting, for arbitrary unbounded Fatou components it is known that all cluster sets must contain infinity \cite{baker-weinreich}. With the additional assumption that  infinity is accessible from $ U $,  points with radial limit infinity are dense in the unit circle as shown in \cite{BakerDominguez99, Bargmann}, although their work did not address the nature of these cluster sets. 

In this context, \ref{teo:A} should be viewed as a  susbtantial generalization of the results in \cite{baranski-karpinska}, since it applies to arbitrary ergodic invariant Fatou components with infinity accessible. 
We remark that our proof does not rely on symbolic dynamics, but on the fact that radii landing at infinity under the Riemann map are dense in $ \partial\mathbb{D} $, and they separate the plane into infinitely many regions, each of them containing a different cluster set, as well as on a deep analysis of clusters sets using null-chains of crosscut neighbourhoods (see Sect. \ref{subsect-Riemann} for definitions).

\subsection*{Inner functions associated to unbounded invariant Fatou components}

As stated above, to each invariant Fatou component $ U $ of a transcendental entire function $ f $ we associate an inner function $ g\colon\mathbb{D}\to\mathbb{D} $ via a Riemann map $ \varphi\colon\mathbb{D}\to U $. Such inner function is unique up to conformal conjugacy in the unit disk, and it is well-known that it is either a finite Blaschke product (when $ f|_{U} $ has finite degree); or conjugate to an infinite Blaschke product (when $ f|_{U} $ has infinite degree) (Frostman, \cite[Thm. II.6.4]{garnett}). In the former case, $ g $ extends to a rational function $ g\colon\widehat{\mathbb{C}}\to\widehat{\mathbb{C}} $, whereas in the latter there exists at least a point $ e^{i\theta}\in\partial \mathbb{D}$ where $ g $ does not extend holomorphically to any neighbourhood of it. Such a point is called a \textit{singularity} for $ g $, and the set of singularities of $ g $ is denoted by $ \textrm{Sing} (g)\subset\partial\mathbb{D} $ (see Sect.  \ref{sect-inner-function}).

A natural problem in this setting is to relate the inner function $ g $ with the function $ f|_{U} $, in the sense of understanding if every inner function can be realised for some $ f|_{U} $, or if considering a particular class of functions $ f $ limits the possible associate inner functions $ g $; for instance if a bound on the number of singularities of $ g $ in $ \partial\mathbb{D} $ exists. Note that, in general, there exist inner functions for which every point in $ \partial\mathbb{D} $ is a singularity.

A first (naive) remark is that singularities of $ g $ are related to accesses from $ U $ to infinity, since the singularities of $ g $ share many properties with the essential singularity of $ f $ (e.g. both are the only accumulation points of preimages of almost every point). In particular, bounded invariant Fatou components are always associated with finite Blaschke products. In fact, it is shown in \cite[Prop. 2.7]{bfjk-accesses}  that, if $ \infty $ is accessible from $ U $, then  \[\textrm{Sing} (g)\subset \overline {\left\lbrace e^{i\theta}\in\partial\mathbb{D}\colon\textrm{ the radial limit } \varphi^*(e^{i\theta})\textrm{ is equal to }\infty\right\rbrace}. \]We note that, by the results of \cite{BakerDominguez99, Bargmann}, when $ U $ is ergodic, the latter set is the whole unit circle, and hence the result does not give actual information on the singularities of $ g $.

A different approach is found in \cite{efjs,FatousAssociates}, which relies on having a great control on the singular values of $ f $, i.e. points for which not every branch of the inverse is locally well-defined around it. Indeed, assuming that  the orbits of singular values belong to a compact set in  $ \mathcal{F}(f) $ (i.e. assuming $ f $ to be hyperbolic), they give explicit bounds for the number of singularities. One can see from the proof that it is enough to assume that $ f$ behaves as if it was a hyperbolic function when restricted to $ U $ i.e.  that the orbits of singular values in $ U $ are compactly contained in $ U $. We shall not enter into the details at this point, but  keep in mind the main idea: controlling the singular values of $ f $ allows to bound the singularities of $ g $, and it is enough to have this control on the singular values which actually lie inside $ U $.

In this direction, let us consider the 
 \textit{postsingular set} of $ f $\[ P(f)\coloneqq \overline {\bigcup\limits_{s\in SV(f)}\bigcup\limits_{n\geq 0} f^n(s)},\] where $ SV(f) $ denotes the set of singular values of $ f $, and define the following more general class of   Fatou components. 

\begin{defi*}{\bf (Postsingularly separated Fatou components)}
	Let $ f $ be a transcendental entire function, and let $ U $ be an invariant Fatou component. We say that $ U $ is a {\em postsingularly separated Fatou component} (PS Fatou component) if  there exists a domain $ V $, such that $ \overline{V}\subset U $ and \[P(f)\cap U\subset V.\]
\end{defi*}

Hence, a PS Fatou component is a Fatou component whose postsingular values are allowed to accumulate at $ \infty $, as long as they accumulate through accesses to $ \infty $. Indeed, the role of the domain $ V $ is precisely to control in which  accesses of $ U $ do the postsingular values accumulate.

Observe that PS Fatou components can be seen as a generalization of Fatou components of hyperbolic functions. In fact, if $ U $ is a Fatou component of a hyperbolic function $ f $, then $ P(f) \cap U$ is contained in a compact set $ V $ in $ U $. PS Fatou components allow $ V $ not to be compact, we only ask $ \overline{V}\subset U $.

Note that there is no requirement for $ P(f) $ outside $ U $; in particular, $ P(f) $ is allowed to accumulate in $ \partial U $.

\begin{figure}[htb!]\centering
	\includegraphics[width=15cm]{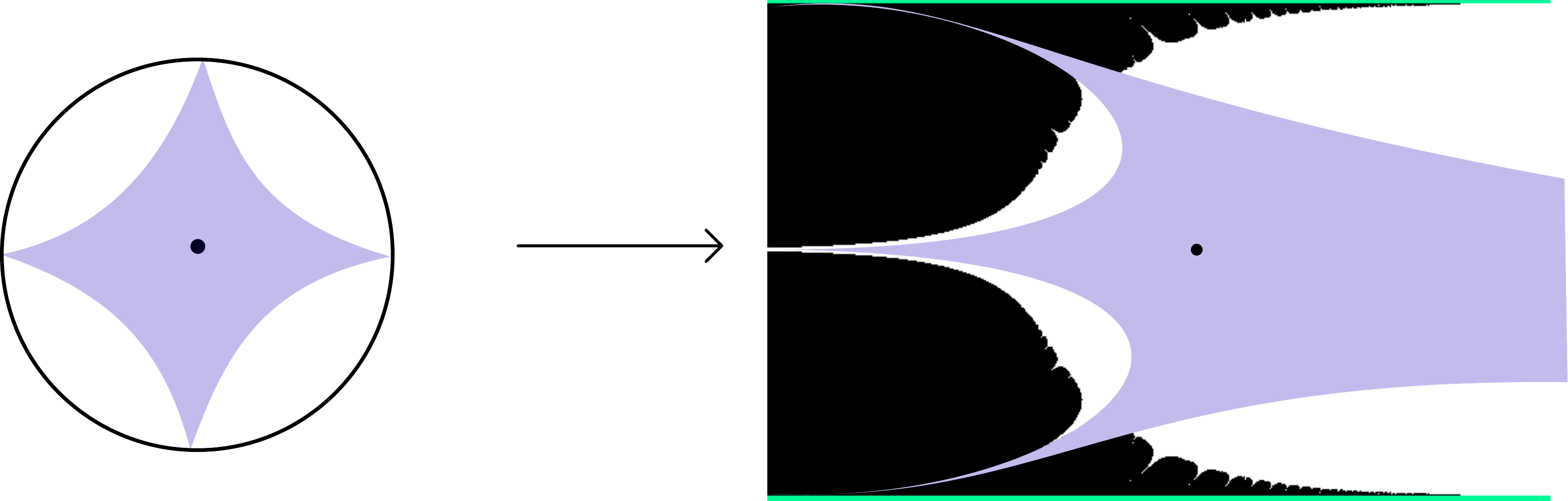}
	\setlength{\unitlength}{15cm}
	\put(-0.05, 0.29){$U$}
	\put(-0.2, 0.175){$V$}
	\put(-0.61, 0.175){$\varphi$}
	\put(-0.77, 0.25){$\mathbb{D}$}
	\caption{\footnotesize Schematic representation of how a postsingularly separated Fatou component would look like. The Fatou component of the left is a Baker domain of $ z+e^{-z} $. For this particular example, the domain $ V $ could have been taken simpler. However, we wanted to illustrate how $ V $ looks like in general. As we will prove in the \ref{technical-lemma}, given a Riemann map $ \varphi\colon\mathbb{D}\to U $, $ \varphi^{-1} (V)$ is a domain enclosed by curves landing in $ \partial\mathbb{D} $.
	}
\end{figure}

The postsingularly separated condition is sufficient to describe the singularities of $ g $, as we show in the following theorem.

\begin{named}{Theorem C}\label{teo:C} {\bf (Singularities for the associated inner function)} 
Let $ f $ be a transcendental entire function, and let $ U $ be  an invariant Fatou component, such that $ \infty $ is accessible from $ U $.  Let $ \varphi\colon\mathbb{D}\to U $ be a Riemann map, and let $ g\coloneqq\varphi^{-1}\circ f\circ\varphi $ be the corresponding associated inner function. Assume $ U $ is postsingularly separated. 

\noindent Then, the set of singularities of $ g $ has zero Lebesgue measure in $ \partial \mathbb{D} $. Moreover, if $ e^{i\theta}\in\partial \mathbb{D} $ is a singularity for $ g $, then  $ \varphi^*(e^{i\theta})=\infty $.
\end{named}

Recall that we do allow postsingular values to accumulate at $ \infty $. Hence, in contrast to the setting considered in \cite{efjs, FatousAssociates}, we allow $ \varphi^{-1}(SV(f)) $ to accumulate in $ \partial \mathbb{D} $. Moreover, \ref{teo:C} strengthens the result in \cite[Prop. 2.7]{bfjk-accesses}, showing that a singularity not only must be approximated by points with radial limit infinity, but itself must have radial limit infinity, in accordance with the \textit{a priori} naive idea of relating singularities with accesses to infinity.

\subsection*{Dynamics on the boundary of unbounded invariant Fatou components}

Periodic boundary points have been widely studied for rational maps. As it was mentioned before, Przytycki and Zdunik \cite{PrzytyckiZdunik_DensityPeriodicSources} proved that repelling periodic points are dense in the boundary of any attracting or parabolic basin $ U $ of a rational map, actually showing that \textit{accessible} periodic points are dense in $ \partial U $. 
In the setting of transcendental entire functions, the study of periodic points in the boundary of Fatou components remains much more unexplored. On the one hand,
 in \cite[Thm. H]{BakerDominguez00} it is proven that all repelling periodic points are accessible when considering a transcendental entire function with finitely many singular values whose Fatou set consists of a totally invariant attracting or parabolic basin.\footnote{We note that Theorem H in \cite{BakerDominguez00} relies on \cite{Baker1970}, which has a flaw (see \cite{RempeSixsmith_Baker}). However, as it is shown in \cite[p. 1612]{RempeSixsmith_Baker}, the result holds using  \cite[Thm. 1.3.c]{RempeSixsmith_Baker}.}  
On the other hand, there are examples of hyperbolic and simply parabolic Baker domains without periodic boundary points, (see e.g. \cite{BaranskiFagella}) and, in fact,  the existence of periodic points on the boundary of Siegel disks for which $ \infty $ is accessible is ruled out by our \ref{corol:B}. As far as we are aware, no general results exist concerning periodic boundary points for transcendental functions.

In this section, we shall apply the techniques developed throughout the paper to study  periodic points in $ \partial U $, and also escaping points, in this general setting. To this end, and being very far from the rational case where the only singularities of $f^{-1}$ are a finite number of critical values,
we need a slightly stronger condition on the orbits of the singular values of $f$. More precisely, we restrict to the following class of Fatou components, which is a subset of the previous ones.
\begin{defi*}{\bf (Strongly postsingularly separated Fatou components)}
	Let $ f $ be a transcendental entire function, and let $ U $ be an invariant Fatou component. We say that $ U $ is a {\em strongly postsingularly separated Fatou component} (SPS Fatou component)   if there exists a simply connected domain $ \Omega $ and a domain $ V $ such that $ \overline{V}\subset U $, $ \overline{U}\subset \Omega $,  and \[{P(f)}\cap\Omega\subset V.\] 
\end{defi*}

\begin{figure}[htb!]\centering
	\includegraphics[width=9cm]{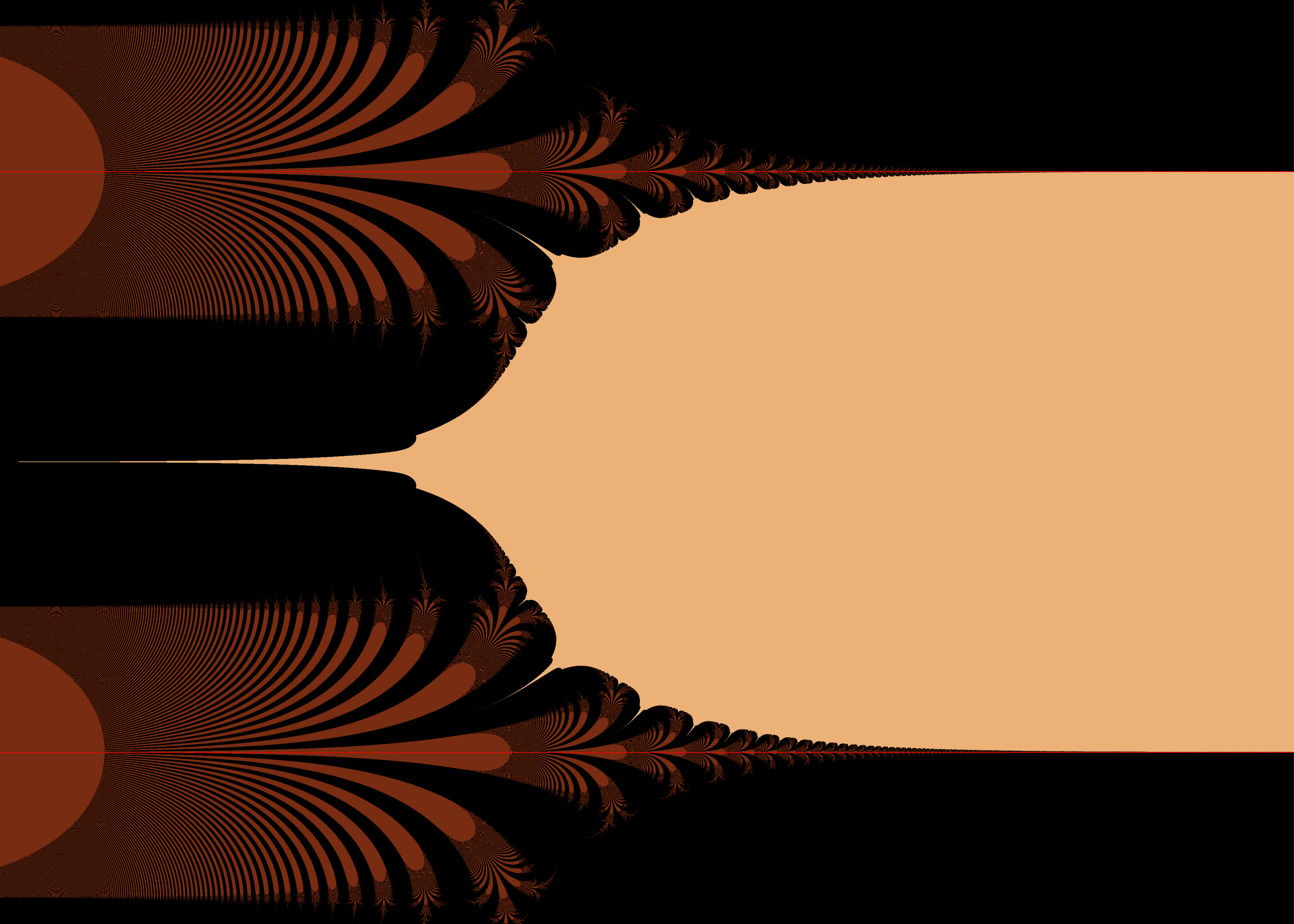}
	\setlength{\unitlength}{9cm}
	\put(-1.11, 0.57){\footnotesize $ \mathbb{R}+\pi i $}
	\put(-1.11, 0.11){\footnotesize $ \mathbb{R}-\pi i $}
	\put(-0.5, 0.35){\tiny $ \bullet$}
	%\put(-0.4, 0.35){\tiny $ \times$}
	\put(-0.5, 0.33){\tiny $ 0$}
	%	\put(-0.4, 0.33){\tiny $ 1$}
	\caption{\footnotesize Dynamical plane for $ f(z)=z+e^{-z} $, which is $ 2\pi i $-pseudoperiodic, and contains a Baker domain $ U_k $ in each strip $ \left\lbrace (2k-1)\pi<\textrm{Im }z<(2k+1)\pi\right\rbrace_ {k\in \mathbb{Z}} $. All of them are SPS. Indeed, there is one critical point $ z_k=2\pi k i $ in each strip, which lies in $ U_K $, and there are no asymptotic values. This function was studied in \cite{BakerDominguez99, FagellaHenriksen,Fagella-Jové}.}
\end{figure}
Hence, the control on the postsingular set that we require is two-fold. On the one hand, to control the postsingular values inside $ U $, we ask $ U $ to be postsingularly separated. On the other hand, the existence of the simply connected domain $ \Omega $ is needed to control the postsingular values in a neighbourhood of  $ \partial U $.

We note that this condition is  analogous to the one given by Pérez-Marco \cite{Pérez-Marco} for the study of the boundary of Siegel disks. The need of this condition is discussed in Section \ref{subsect-TL4}.

\begin{example*}{\bf (SPS Fatou components)}\label{examples}
	Many of the examples of unbounded invariant Fatou components that have been explicitly studied are SPS. 
	%Such Fatou components are either basins of attraction or Baker domains. 
Examples of  basins of attraction include the ones of hyperbolic functions studied in \cite{ baranski-karpinska}, as well as the hyperbolic exponentials \cite{DevaneyGoldberg}. However, the results we prove are already known for this class of functions, since $ \mathcal{J}(f)=\partial U $. A more significant example are  the basins of attraction of $ f(z)=z-1+e^{-z} $.
Regarding Baker domains, consider Fatou's function  $ f(z)=z+1+e^{-z} $, studied in \cite{Vasso}; the Baker domains of $ f(z)=z+e^{-z} $, investigated in \cite{BakerDominguez99,Fagella-Jové}; and the ones in  \cite[Ex. 4]{FagellaHenriksen}. 

In all cases, singular values are all critical values and lie in the Baker domains, at a positive distance from the boundary, even though their orbits accumulate at $ \infty $. 
Observe however that they do so through only one access, and that there is an absorbing domain $ V\subset U $ with $ P(f)\subset V $(we refer to  \cite{Vasso, Fagella-Jové} for further details). This implies, in particular, that iterated inverse branches are globally well-defined around $ \partial U $. Note that this situation is  much simpler than the general case that we address in our theorem, and in fact, if this were always the case, our proofs could be simplified to a great extent (due to this global definition if inverse branches).
\end{example*}

We prove the following.

\begin{named}{Theorem D}\label{teo:D} {\bf (Boundary dynamics)} Let $ f $ be a transcendental entire function, and let $ U $ be  an invariant Fatou component, such that $ \infty $ is accessible from $ U $.
	Assume $ U $ is strongly postsingularly separated. Then, periodic points in $ \partial U $ are accessible from $ U $.
	
\noindent	Moreover, if $ U $ is recurrent, then both periodic and escaping points are dense in $ \partial U $.
\end{named}

\begin{remark*}{\bf (Parabolic basins)}
	Note that parabolic basins cannot be PS (and hence neither SPS), due to the fact that the parabolic fixed point is always in $ P(f) $. However, as we see in Section \ref{sect-parabolic}, our results apply if the parabolic fixed point is  the only point in  $ \partial U \cap P(f)$.
\end{remark*}

Apart from the more classical analysis of the existence and distribution of periodic points, we notice that other types of orbits, like escaping points, are a topic of more recent and wide interest. In this sense, one can consider the the escaping set $ \mathcal{I}(f) $, i.e. the set of $ z\in\mathbb{C} $ such that $ f^n(z)\to\infty $ as $ n\to\infty $; the  bounded-orbit  set $ \mathcal{K}(f) $; i.e. the set of points whose orbit remains bounded under iteration; and the bungee set $ \mathcal{BU}(f) $, i.e. the set of points whose orbit is nor escaping nor bounded. 

The existence of escaping points in the boundary of Fatou components fits into the more general problem of understanding the relationship between the two dynamically relevant partitions of the complex plane: on the one hand,  the Fatou set $ \mathcal{F}(f) $ and the Julia set $ \mathcal{J}(f) $; and on the other hand, the escaping set $ \mathcal{I}(f) $, the bungee set $ \mathcal{BU}(f) $ and the bounded-orbit  set $ \mathcal{K}(f) $. 
By normality, every Fatou component  of $ f $ must be contained either in  one and only one of these sets. In fact, periodic and preperiodic Fatou components lie in $ \mathcal{K}(f) $, except for Baker domains (and their preimages), in which iterates tend to infinity, and hence belong to $ \mathcal{I}(f) $.

One may ask how   $ \partial U $ relates to  these three sets: $ \mathcal{I}(f) $, $ \mathcal{BU}(f) $ or $ \mathcal{K}(f) $. This question is undoubtedly more difficult (leaving out the trivial case where $ U $ is bounded), and often approachable only in terms of harmonic measure 
(see Thm. \ref{thm-ergodic-properties}). In particular, for recurrent unbounded Fatou components   the bungee set $ \mathcal{BU}(f) $ has full harmonic measure, and it is an open question the existence of even one boundary point in $ \mathcal{K}(f)\cap\partial U $, or in $ \mathcal{I}(f)\cap\partial U $. Thus, \ref{teo:D} shows that, under additional conditions on $ P(f) $, both $ \mathcal{K}(f)\cap\partial U $ and $ \mathcal{I}(f)\cap\partial U $ are dense in $ \partial U $, despite having zero harmonic measure.

\vspace{0.2cm}
{\bf Structure of the paper.} 
Section \ref{Sect-prelim} contains preliminary results  about planar topology.
Section \ref{sect-inner-function} is devoted to introduce the main tool when studying the dynamics on a Fatou component: the associated inner function, and to prove several results concerning the boundary behaviour of a Riemann map for an invariant Fatou component.  In Section \ref{sect-ergodic}, we deal with ergodic Fatou components and their topological boundary structure, proving \ref{teo:A} and \ref{corol:B}, relying on the results of the previous section.

Sections \ref{sect-top-hyp} and  \ref{sect-strong} are devoted to PS  Fatou components, proving \ref{teo:C} and \ref{teo:D}, respectively. Both proofs depend on the construction of appropriate neighbourhoods of the connected components of $ \partial U $, in which inverses are well-defined and act as a contraction with respect to some metric. We collect this  in some Technical Lemmas, which can be found in Section \ref{proof-of-TL}.
Finally, in Section \ref{sect-parabolic}  the generalization of the previous results to parabolic basins is discussed.

\vspace{0.2cm}
{\bf Notation.} Throughout this article, $ \mathbb{C} $ and $ \widehat{\mathbb{C}} $ denote the complex plane and the Riemann sphere, respectively. Given a set $ A\subset \mathbb{C} $, we denote by $ \textrm{Int }A $, its interior; by $ \overline{ A } $ and $ \partial A $, its closure and its boundary taken in $ \mathbb{C} $; and by $ \widehat{ A } $ and $ \widehat{\partial} A $, its closure and its boundary when considered in $ \widehat{\mathbb{C}} $.   If $ U $ is a  simply connected domain, $ \omega_U $ stands for the harmonic measure in $ \partial U $. We denote by $ \mathbb{D} $, the unit disk; by $ \partial\mathbb{D} $, the unit circle; and by $ \lambda $, the Lebesgue measure on $ \partial\mathbb{D} $, normalized so that $ \lambda ( \partial\mathbb{D} )=1 $.

\vspace{0.2cm}
{\bf Acknowledgments.} We would like to thank Krzysztof Barański, Arnaud Chéritat,  Lasse Rempe, Pascale Roesch and Anna Zdunik, for interesting discussions and comments.

\section{Background on planar topology}\label{Sect-prelim}
In this section we state some standard results of planar topology, which we use in our proofs.

By a \textit{simple arc}, or a \textit{Jordan arc}, we mean a set homeomorphic to the closed
interval $ \left[ 0, 1\right]  $. By a \textit{closed simple curve}, or a \textit{Jordan curve}, we mean a set homeomorphic to a circle. Recall the well-known Jordan Curve Theorem.

\begin{thm}{\bf (Jordan Curve Theorem)}\label{thm-jordan-curve}
	Let $ \gamma $ be a simple closed curve
	in $ \widehat{\mathbb{C}} $. Then, $ \gamma $ separates $ \widehat{\mathbb{C}} $ into precisely two connected components.
\end{thm}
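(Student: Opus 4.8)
The plan is to derive this classical fact from a Mayer--Vietoris/subdivision computation of the reduced homology of the complement, which is the standard route (in practice it is quoted rather than reproved). Since $\gamma$ is compact, $\widehat{\mathbb{C}}\setminus\gamma$ is open, hence locally path-connected, so its connected components coincide with its path components; as $H_0$ of a space is the free abelian group on its path components, counting the components of $\widehat{\mathbb{C}}\setminus\gamma$ amounts to computing $\widetilde H_0(\widehat{\mathbb{C}}\setminus\gamma;\mathbb{Z})$ and adding $1$. Thus it suffices to prove $\widetilde H_0(\widehat{\mathbb{C}}\setminus\gamma;\mathbb{Z})\cong\mathbb{Z}$, using $\widehat{\mathbb{C}}\cong S^2$ throughout.

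I would first establish the auxiliary statement that an arc does not separate the sphere: if $A\subset S^2$ is homeomorphic to $[0,1]$ then $\widetilde H_i(S^2\setminus A;\mathbb{Z})=0$ for all $i$. This is proved by a subdivision argument. Write $A=A'\cup A''$ as a union of two sub-arcs meeting in a single point $p$, so that $\{S^2\setminus A',\,S^2\setminus A''\}$ is an open cover of $S^2\setminus\{p\}$ whose intersection is $S^2\setminus A$; the reduced Mayer--Vietoris sequence, together with $\widetilde H_*(S^2\setminus\{p\})=\widetilde H_*(\mathbb{R}^2)=0$, shows that a class $\alpha\in\widetilde H_i(S^2\setminus A)$ which dies in both $\widetilde H_i(S^2\setminus A')$ and $\widetilde H_i(S^2\setminus A'')$ is already $0$. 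If $\alpha\neq 0$, it survives in one of the two halves; iterating the bisection yields a nested sequence of sub-arcs $A\supset A_1\supset A_2\supset\cdots$ with $\bigcap_n A_n$ a single point $q$ and with $\alpha\neq 0$ in $\widetilde H_i(S^2\setminus A_n)$ for every $n$. But $S^2\setminus\{q\}=\bigcup_n(S^2\setminus A_n)$ is an increasing union of open sets and $\widetilde H_i(S^2\setminus\{q\})=0$, so a singular cycle representing $\alpha$ bounds there; since a singular chain has compact support, it already bounds in $S^2\setminus A_n$ for $n$ large, contradicting $\alpha\neq 0$.

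Granting this, the theorem follows by one more Mayer--Vietoris step. Write $\gamma=\gamma_+\cup\gamma_-$ as a union of two arcs meeting exactly in two points $\{p,q\}$, and set $B=S^2\setminus\gamma_+$, $C=S^2\setminus\gamma_-$; these are open, $B\cap C=S^2\setminus\gamma$, and $B\cup C=S^2\setminus\{p,q\}$. Since $\gamma_\pm$ are arcs, $\widetilde H_*(B)=\widetilde H_*(C)=0$ by the previous paragraph, so the reduced Mayer--Vietoris sequence degenerates to isomorphisms $\widetilde H_i(S^2\setminus\{p,q\})\cong\widetilde H_{i-1}(S^2\setminus\gamma)$. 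As $S^2\setminus\{p,q\}\simeq S^1$, we obtain $\widetilde H_0(S^2\setminus\gamma)\cong\widetilde H_1(S^2\setminus\{p,q\})\cong\mathbb{Z}$ (and $\widetilde H_1(S^2\setminus\gamma)=0$), which is precisely the claim: $\widehat{\mathbb{C}}\setminus\gamma$ has exactly two connected components.

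The step I expect to be the real obstacle is the non-separation of arcs established in the second paragraph --- concretely, deducing a contradiction from the nested bisection together with the compact-support property of singular chains; the remainder is diagram-chasing in exact sequences plus the elementary fact that a twice-punctured sphere is homotopy equivalent to a circle. In practice, since this is a classical result, we would simply cite it (e.g.\ Hatcher's \emph{Algebraic Topology}, Prop.~2B.1, or Newman's \emph{Elements of the Topology of Plane Sets of Points}), the sketch above merely indicating the argument we have in mind.
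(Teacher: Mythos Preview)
Your sketch is correct and follows the standard algebraic-topology route (Mayer--Vietoris plus the bisection argument for arcs, as in Hatcher, Prop.~2B.1). The paper, however, does not prove this statement at all: it is placed in the preliminary Section~2 on background planar topology and simply quoted as the classical Jordan Curve Theorem, to be used later as a black box. Since you yourself conclude that ``in practice \dots\ we would simply cite it,'' you are in agreement with the paper's treatment; the detailed argument you supply is a bonus rather than a discrepancy.
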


By a \textit{domain} $ U\subset\mathbb{C} $, we mean a connected open  set. A domain $ U $ is \textit{simply connected} if every closed curve in $ U $ is homotopic to a point in $ U $. We shall need the following criterion to characterize when a domain is simply connected.

\begin{thm}{\bf (Criterion for simple connectivity, {\normalfont \cite[Prop. 5.1.3]{Beardon}})}\label{teo-topologia} Let $ U $ be a domain in $ \widehat{\mathbb{C}} $. Then, $ U$ is simply connected if, and only if, $ \widehat{\mathbb{C}} \smallsetminus U$ is connected.
\end{thm}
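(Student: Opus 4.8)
The plan is to establish the two implications separately, using the Jordan Curve Theorem (Theorem~\ref{thm-jordan-curve}) and the winding number (index) of a loop as the main bookkeeping device. Throughout, set $K\coloneqq\widehat{\mathbb{C}}\setminus U$; since $U$ is open, $K$ is compact, and it suffices to treat the case $K\neq\emptyset$ (if $K=\emptyset$ then $U=\widehat{\mathbb{C}}$ is simply connected and $K$ is vacuously connected). I would first record the elementary fact that a Jordan curve $\gamma\subset\mathbb{C}$ which is null-homotopic in an open set $V\subset\mathbb{C}$ has winding number $0$ about every point of $\mathbb{C}\setminus V$, since the index is a homotopy invariant of loops in $\mathbb{C}\setminus\{w\}$ and a null-homotopy stays inside $V$.

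\emph{If $K$ is disconnected, then $U$ is not simply connected.} Write $K=A\sqcup B$ with $A,B$ nonempty and closed in $\widehat{\mathbb{C}}$, hence compact and at positive mutual distance. Choosing a sufficiently fine grid of squares (in a chart obtained by moving a point of $A$ away from $\infty$) and letting $\mathcal{Q}$ be the union of the closed squares meeting $A$, one gets a compact neighbourhood of $A$ disjoint from $B$ whose boundary, after the standard perturbation at the finitely many corners where four squares meet, is a finite disjoint union of Jordan curves contained in $U$. As $A$ lies in $\operatorname{int}\mathcal{Q}$ and $B$ in its exterior, one of these curves, say $\gamma$, separates a chosen $a\in A$ from a chosen $b\in B$; by the Jordan Curve Theorem $\gamma$ bounds two complementary Jordan domains $D_1\ni a$, $D_2\ni b$ in $\widehat{\mathbb{C}}$. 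After a M\"obius change of coordinates sending a point of $K$ to $\infty$ we may assume $\gamma\subset\mathbb{C}$; then points of $D_1$ and of $D_2$ have winding numbers about $\gamma$ differing by $1$, so one of the two domains carries nonzero index while containing a point of $K=\mathbb{C}\setminus U$. By the fact recalled above, $\gamma$ is not null-homotopic in $U$, so $U$ is not simply connected.

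\emph{If $K$ is connected, then $U$ is simply connected.} Let $\gamma\subset U$ be any Jordan curve; by the Jordan Curve Theorem it splits $\widehat{\mathbb{C}}$ into two Jordan domains $D_1,D_2$. Since $K$ is connected, disjoint from $\gamma$, and contained in $D_1\cup D_2$, it lies entirely in one of them, say $K\subset D_1$; then $\overline{D_2}=D_2\cup\gamma$ is a closed topological disk disjoint from $K$, hence contained in $U$, so $\gamma$ is null-homotopic in $U$. It remains to upgrade this to \emph{every} loop in $U$ being null-homotopic, for which I would invoke that the fundamental group of an open connected subset of $\widehat{\mathbb{C}}$ is free and generated by small loops around the individual components of its complement — the complement being connected here, there are no such generators, so $\pi_1(U)$ is trivial. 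Alternatively one can argue directly that an arbitrary loop in $U$ is freely homotopic in $U$ to a finite concatenation of Jordan curves (perturb to a generic immersion and resolve self-intersections one at a time), each null-homotopic by the previous sentence.

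\emph{Main obstacle.} The delicate points are the two ``upgrading'' steps: in the first implication, extracting a \emph{single} Jordan curve of $\partial\mathcal{Q}$ that genuinely separates $a$ from $b$ (and handling the corner perturbation cleanly); and in the second implication, passing from ``every Jordan curve is null-homotopic'' to ``$U$ is simply connected'', which is precisely where the structure of planar fundamental groups (or a careful homotopy-of-loops argument) is needed. The winding-number computation on $\widehat{\mathbb{C}}$ also requires fixing a chart by moving a complementary point to $\infty$, which is harmless but should be stated explicitly.
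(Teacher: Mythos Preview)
The paper does not prove this statement at all: it is simply quoted from \cite[Prop.~5.1.3]{Beardon} as background, so there is no ``paper's own proof'' to compare against.

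Your argument is in the right spirit and the first implication (disconnected complement $\Rightarrow$ not simply connected) is essentially complete; the grid-square construction and winding-number obstruction are standard and correct. In the second implication, however, both of your ``upgrading'' devices are shaky. Invoking that $\pi_1(U)$ is free on generators indexed by the complementary components is circular: that structure theorem is usually \emph{derived} from (a version of) the very equivalence you are proving. The alternative route---perturb an arbitrary loop to a generic immersion and resolve crossings into a product of Jordan curves---can be made to work, but as written it is only a sketch; you should say explicitly that at a transverse self-intersection the loop is freely homotopic (rel.\ that point) to the concatenation of the two sub-loops, and then induct on the number of crossings. A cleaner finish is to send a point of $K$ to $\infty$ by a M\"obius map, so that $U\subset\mathbb{C}$; then for any loop $\gamma\subset U$ the connected set $K$ lies in a single component of $\widehat{\mathbb{C}}\setminus\gamma$, namely the one containing $\infty$, hence $n(\gamma,a)=0$ for every $a\in\mathbb{C}\setminus U$. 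One then appeals to the standard complex-analytic equivalence (e.g.\ as in Rudin or Conway) that an open connected $U\subset\mathbb{C}$ is simply connected iff every cycle in $U$ has zero index about every point of the complement. This avoids the ad~hoc reduction to Jordan curves.
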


Given a domain $ U\subset\mathbb{C} $, a point $ p\in\widehat{\partial} U $ is called \textit{accessible} from $ U $ if there exists  a curve $ \gamma\colon \left[ 0,1\right) \to U $ such that $ \gamma(t)\to p $, as $ t\to 1 $. We also say that $ \gamma $ \textit{lands} at $ p $. 
Fixed $ q\in U $ and $ p\in\widehat{\partial} U $ accessible, each homotopy class (with fixed endpoints) of curves $ \gamma\colon \left[ 0,1\right) \to U $ such that  $ \gamma(0)=q $ and $ \gamma(1)=p $ is called an \textit{access} from $ U $ to $ p\in\widehat{\partial} U $.

Finally, given a set in $ \mathbb{C} $, let us define its corresponding filled closure as follows.

\begin{defi}{\bf (Filled closure)}\label{defi-filled}
	Let $ X \subset\mathbb{C}$ be any connected set in the complex plane. We define the \textit{filled closure} of $ X $ as \[\textrm{fill}(X)\coloneqq \overline{X}\cup \left( \textrm{components } U \textrm{of }\mathbb{C}\smallsetminus \overline{X}  \textrm{ such that } \infty \textrm{ is not accessible from }U\right).\]
\end{defi}

We note that $  \textrm{fill}(X)$ is always closed, independently of whether $ X $ is closed or not.

\section{Inner function associated to a Fatou component}\label{sect-inner-function}
The main tool when working with a simply connected invariant Fatou component is the Riemann map, and the conjugacy that it induces between the original function in the Fatou component and an inner function of the unit disk $ \mathbb{D} $. More precisely, let $ U $ be an invariant Fatou component of a transcendental entire function $ f$. Such component is always simply connected \cite{baker1984}, so one may consider a Riemann map $ \varphi\colon \mathbb{D}\to U $. Then, \[
g\colon\mathbb{D}\longrightarrow\mathbb{D},\hspace{1cm}g\coloneqq \varphi^{-1}\circ f\circ\varphi.
\]
is an {\em inner function}, i.e. a holomorphic self-map of the unit disk $ \mathbb{D}$ such that, for almost every $ \theta\in \left[ 0, 2\pi\right)  $, the radial limit \[g^*(e^{i\theta})\coloneqq\lim\limits_{r\to 1^-}g(re^{i\theta})\] belongs to $\partial \mathbb{D}$ (see e.g. \cite[Sect. 2.3]{efjs}). Then, $ g $ is called an \textit{inner function associated} to $ U $. Although $ g $ depends on the choice of $ \varphi $, inner functions associated to the same Fatou components are conformally conjugate, so, for our purposes, we can ignore the dependence on the Riemann map. 

Since $ U $ is unbounded, $ f|_{U} $ need not be a proper self-map of $ U $, and thus $ f|_{U} $ has infinite degree. In this
case, the associated inner function $ g $ has also infinite degree, and must have at least one singularity on the boundary of the unit disk.
\begin{defi}{\bf (Singularity of an inner function)}\label{def-singularity}
	Let $ g $ be an inner function. A point $ e^{i\theta}\in\partial\mathbb{D} $ is called a
	\textit{singularity} of $ g $ if $ g $ cannot be continued analytically to a neighbourhood of $ e^{i\theta} $.
	Denote the set of all singularities of $ g $ by $ \textrm{Sing}(g) $.
\end{defi}

Throughout this paper we assume that an inner function $ g $ is always continued to $ \widehat{\mathbb{C}}\smallsetminus \overline{\mathbb{D} }$ by the reflection principle,
and to $ \partial\mathbb{D}\smallsetminus\textrm{Sing}(g) $ by analytic continuation. In other words, $ g $ is considered as a meromorphic function $ g\colon   \widehat{\mathbb{C}}\smallsetminus \textrm{Sing}(g)   \to \widehat{\mathbb{C}}$.

\begin{figure}[htb!]\centering
	\includegraphics[width=13cm]{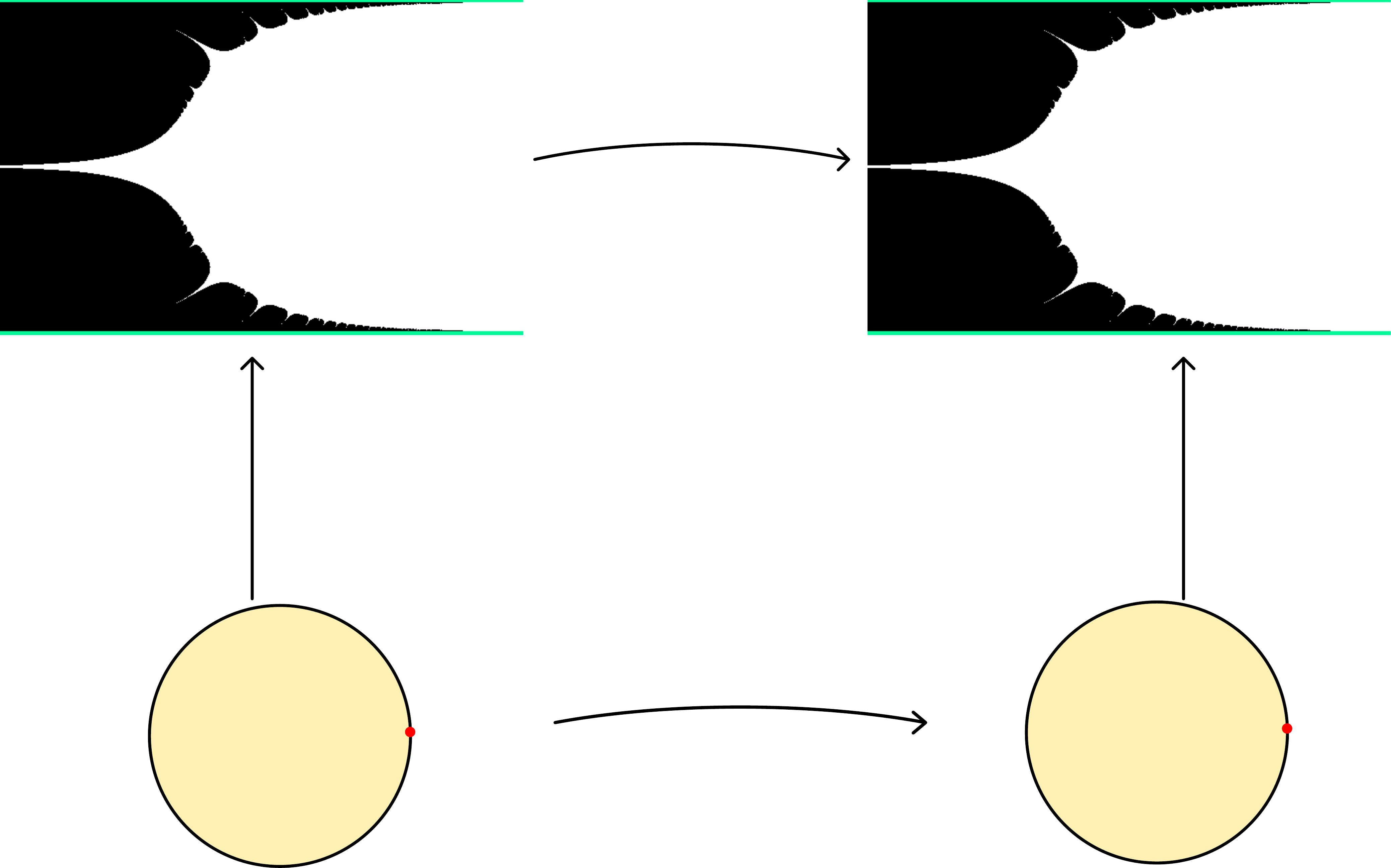}
	\setlength{\unitlength}{13cm}
	\put(-0.8, 0.27){$ \varphi $}
	\put(-0.14, 0.27){$ \varphi $}
	\put(-0.48, 0.13){$g$}
	\put(-0.52, 0.53){ $f$}
	\put(-0.05, 0.58){ $U$}
	\put(-0.1, 0.15){ $\mathbb{D}$}
	\put(-0.7, 0.58){ $U$}
	\put(-0.73, 0.15){ $\mathbb{D}$}
	\put(-0.71, 0.08){ $p$}
	\put(-0.08, 0.09){ $p$}
	\caption{\footnotesize This diagram shows the construction of the inner function. Here, we have the dynamical plane of $ f(z)=z+e^{-z} $, with one of its invariant Baker domains $ U $ (in white). In the Baker domain, iterates converge to $ \infty $ under iteration, while in the unit disk, they converge to the Denjoy-Wolff point $ p\in\partial\mathbb{D} $. The inner function associated to this particular domain was computed explicitly in \cite[Thm. 5.2]{BakerDominguez99}: $ g(z)=\frac{z^2+3}{1+3z^2} $.}
\end{figure}

The asymptotic behaviour of iterates of $ g|_{\mathbb{D}} $, and hence of $ f|_{U} $,  is described by the Denjoy-Wolff Theorem, being valid not only for inner functions, but for any holomorphic self-map of $ \mathbb{D} $ (see e.g. \cite[Thm. IV.3.1]{Carleson-Gamelin}, or \cite[Thm. 5.4]{milnor}).

\begin{thm}{\bf (Denjoy-Wolff Theorem)}\label{teo-dw} Let $ g$ be a holomorphic self-map of  $ \mathbb{D} $, not conjugate to a rotation. Then, there exists $ p\in\overline{\mathbb{D}} $, such that for all $ z\in\mathbb{D} $, $ g^n(z)\rightarrow p $.	The point $ p $ is called the {\em Denjoy-Wolff point} of $ g $.\end{thm}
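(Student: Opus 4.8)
\emph{Proof proposal.} The plan is to distinguish two cases according to whether $g$ has a fixed point in $\mathbb{D}$, and in each case to exhibit the point $p$ and prove that $g^n\to p$ locally uniformly. Throughout, $D_r:=\{|z|<r\}$.

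\textbf{Case 1: $g$ has a fixed point $a\in\mathbb{D}$.} After conjugating by the automorphism $z\mapsto(z-a)/(1-\bar a z)$ one may assume $a=0$. The Schwarz lemma gives $|g(z)|\le|z|$, with equality at a single point (or $|g'(0)|=1$) only when $g$ is a rotation, which is excluded by hypothesis; hence $|g(z)|<|z|$ for every $z\neq 0$. For $0<\rho<1$, compactness of $\{|z|=\rho\}$ gives $c(\rho):=\max_{|z|=\rho}|g(z)|<\rho$, so by the maximum principle $g(\overline{D_\rho})\subset\overline{D_{c(\rho)}}$. Iterating, $g^n(\overline{D_\rho})\subset\overline{D_{\rho_n}}$ where $\rho_0=\rho$ and $\rho_{n+1}=c(\rho_n)$ is strictly decreasing; its limit is a fixed point of the continuous map $c$, hence $0$ since $c(t)<t$ for $t>0$. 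Thus $g^n\to 0$ uniformly on compact subsets, and $p:=a$ works.

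\textbf{Case 2: $g$ has no fixed point in $\mathbb{D}$ (Wolff's construction).} For $r\in(0,1)$ the map $h_r:=r\,g$ sends $\overline{D_r}$ into $D_r$, so Brouwer's fixed point theorem yields $z_r\in D_r$ with $g(z_r)=z_r/r$. Since $g$ has no interior fixed point, $z_r$ cannot accumulate inside $\mathbb{D}$ as $r\to 1$, so along some sequence $r_n\to 1$ we have $z_{r_n}\to p\in\partial\mathbb{D}$. Applying the Schwarz--Pick inequality to $h_{r_n}$ at its fixed point $z_{r_n}$, using the identity $1-\bigl|\tfrac{w-a}{1-\bar a w}\bigr|^2=\tfrac{(1-|a|^2)(1-|w|^2)}{|1-\bar a w|^2}$, cancelling the common factor $1-|z_{r_n}|^2$, and letting $n\to\infty$ (so that $|1-\bar p w|=|p-w|$), one obtains
\[
\frac{1-|g(z)|^2}{|p-g(z)|^2}\ \ge\ \frac{1-|z|^2}{|p-z|^2}\qquad(z\in\mathbb{D}),
\]
that is, $g$ maps every horocycle $E(p,R)=\{z\in\mathbb{D}:|p-z|^2<R(1-|z|^2)\}$ into itself.

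\textbf{Conclusion.} In both cases $\{g^n\}$ is a normal family, so it suffices to show every locally uniform subsequential limit is the constant $p$. Case~1 already gives this. In Case~2, let $g^{n_k}\to h$; by the maximum principle $h$ is constant or $h(\mathbb{D})\subset\mathbb{D}$. If $h$ were nonconstant, passing to a further subsequence with $g^{n_{k+1}-n_k}\to\tilde h$ and using $g^{n_{k+1}}=g^{n_{k+1}-n_k}\circ g^{n_k}$ gives $h=\tilde h\circ h$, hence $\tilde h=\mathrm{id}$ on the open set $h(\mathbb{D})$ and therefore on $\mathbb{D}$; tracing this back forces $g$ to be a Möbius automorphism with $g^{m}=\mathrm{id}$ for some $m$ or $g^{m_k}\to\mathrm{id}$ along a subsequence, impossible for an elliptic automorphism (which has an interior fixed point) and for a parabolic or hyperbolic one (whose iterates converge to the boundary fixed point). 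So $h$ is a constant $c$; since each orbit $\{g^n(z)\}$ remains in the closed horocycle through $z$, $c\in\bigcap_{R>0}\overline{E(p,R)}=\{p\}$, whence $c=p$. Therefore $g^n\to p$ locally uniformly. The substantive part, and the main obstacle, is Case~2: establishing the horocycle-invariance through the Brouwer-plus-Schwarz--Pick limiting argument, and, in the final step, excluding nonconstant subsequential limits --- the delicate sub-case being that of a parabolic or hyperbolic automorphism, where the hyperbolic metric is not strictly contracted and one must argue via the functional relation $h=\tilde h\circ h$ together with the known dynamics of Möbius maps.
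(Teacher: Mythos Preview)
The paper does not give its own proof of this statement; it is quoted as a classical background result with references to \cite[Thm.~IV.3.1]{Carleson-Gamelin} and \cite[Thm.~5.4]{milnor}. So there is no proof in the paper to compare against, and your write-up is a self-contained argument along the classical Wolff line.

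Your proof is essentially correct, but two steps are under-justified. First, the assertion ``tracing this back forces $g$ to be a M\"obius automorphism'' needs an argument: from $g^{m_k}\to\mathrm{id}$ locally uniformly one has, for any $z,w\in\mathbb{D}$ and $m_k\ge 1$,
\[
\rho_\mathbb{D}(g^{m_k}(z),g^{m_k}(w))\ \le\ \rho_\mathbb{D}(g(z),g(w))\ \le\ \rho_\mathbb{D}(z,w)
\]
by Schwarz--Pick; letting $k\to\infty$ forces equality throughout, so $g$ is a hyperbolic isometry of $\mathbb{D}$ and hence an automorphism. Without this line the reader cannot see why $\tilde h=\mathrm{id}$ says anything about $g$ itself. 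Second, in the very last sentence you claim $c\in\bigcap_{R>0}\overline{E(p,R)}$, but the orbit of a \emph{fixed} $z$ only lies in $\overline{E(p,R_z)}$ for one value $R_z$. The point is that $c$ is the \emph{common} limit $c=\lim_k g^{n_k}(z)$ for \emph{every} $z\in\mathbb{D}$; choosing $z$ with $R_z$ arbitrarily small (e.g.\ $z\to p$ radially) then places $c$ in every horocycle, hence $c=p$. With these two clarifications the argument is complete.
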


Inner functions also play an important role when describing boundary dynamics, even in the case when the Riemann map does not extend continuously to $ \overline{\mathbb{D}} $. Although the Riemann map is no longer a conjugacy, many interesting properties of $ \partial U $ and $ f|_{\partial U} $ can be deduced, as it was shown in the pioneering example studied by Devaney and Goldberg \cite{DevaneyGoldberg}, and the subsequent work in \cite{baker-weinreich, BakerDominguez99, Bargmann, RipponStallard_UnivalentBakerDomains, bfjk-escaping}.

The understanding of this interplay between the inner function and the boundary dynamics of the Fatou component requires some background on both inner functions and Riemann maps. This section is dedicated precisely to collect all the results which are needed throughout the paper.

\subsection{Simply connected domains and behaviour of the Riemann map}\label{subsect-Riemann}
Let  $ U\subset\mathbb{C} $ be a simply connected domain, and consider $ \varphi\colon\mathbb{D}\to U $ a Riemann map. We are interested in the extension of $ \varphi $ to $ \partial\mathbb{D} $. By Carathéodory's Theorem,  $ \varphi $ extends continuously to $ \overline{\mathbb{D}} $ if and only if $ \partial U $ is locally connected. Several techniques have been developed to study the boundary extension of $ \varphi $ when $ \partial U $ is not locally connected, including radial limits, cluster sets and prime ends. We shall give a quick introduction to the concepts, which are relevant in this paper, and refer to \cite{pommerenke}, \cite[Sect. 17]{milnor} for a wider exposition on the topic.

\begin{defi}{\bf (Radial limits and cluster sets)}\label{defi-radial-lim}
	Let $ \varphi\colon \mathbb{D}\to U $ be a Riemann map and let $ e^{i\theta}\in\partial \mathbb{D} $. 
	\begin{itemize}
		\item The {\em radial limit} of $ \varphi $ at $ e^{i\theta} $ is defined to be $\varphi^* (e^{i\theta} )\coloneqq\lim\limits_{r\to 1^-}\varphi(re^{i\theta})$.
		\item The {\em radial cluster set} $ Cl_\rho(\varphi, e^{i\theta}) $ of $ \varphi $ at $ e^{i\theta} $ is defined as the set of values $ w\in\widehat{\mathbb{C}}$ for which there is an increasing sequence $ \left\lbrace t_n\right\rbrace _n \subset (0,1) $ such that $ t_n\to 1 $ and $\varphi (e^{i\theta}t_n)\to w $, as $ n\to\infty $.
		\item The {\em cluster set} $ Cl(\varphi, e^{i\theta}) $ of $ \varphi $ at $ e^{i\theta} $ is the set of values $ w\in\widehat{\mathbb{C}} $ for which there is a sequence $ \left\lbrace z_n\right\rbrace _n \subset\mathbb{D}$ such that $ z_n\to e^{i\theta} $ and $\varphi (z_n)\to w $, as $ n\to\infty $.
		\item If $ U $ is unbounded, we define the {\em cluster set in $ \mathbb{C} $} as  \[Cl_\mathbb{C}(\varphi, e^{i\theta})\coloneqq Cl(\varphi, e^{i\theta})\cap\mathbb{C}.\] 
	\end{itemize}
\end{defi}

Observe that every point in $ \partial U $ must belong to the cluster set of some $ e^{i\theta}\in\partial U $.

In some sense, the previous concepts replace the notion of image under $ \varphi $ for points in $ \partial\mathbb{D} $, and allow us to describe the topology of $ \partial U $. 
 On the one hand, cluster sets (and radial cluster sets) are, by definition, non-empty compact subsets of $ \widehat{\mathbb{C}} $. However, cluster sets in $ \mathbb{C} $ may be empty. On the other hand, 
 although $ \varphi $ may not extend continuously to any point in $ \partial\mathbb{D} $,  
the following theorem, due to Fatou, Riesz and Riesz, ensures the existence of radial limits almost everywhere. 

\begin{thm}{\bf (Existence of radial limits, {\normalfont \cite[Thm. 17.4]{milnor}}) }\label{thm-FatouRiez}
	Let $ \varphi\colon\mathbb{D}\to U $ be a Riemann map. Then, for $ \lambda $-almost every point $ e^{i\theta} \in\partial \mathbb{D}$, the radial limit $ \varphi^*(e^{i\theta} ) $ exists. Moreover, fixed $ e^{i\theta}\in \partial\mathbb{D} $ for which $ \varphi^*(e^{i\theta} ) $ exists, then $ \varphi^*(e^{i\theta} )\neq  \varphi^*(e^{i\alpha} )  $, for $ \lambda $-almost every point $ e^{i\alpha} \in\partial \mathbb{D}$.
\end{thm}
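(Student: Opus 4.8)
The statement is classical --- it is precisely \cite[Thm.~17.4]{milnor} --- so the plan is to deduce it from two cornerstones of boundary-value theory on $\mathbb{D}$: Fatou's theorem on the almost-everywhere existence of radial limits, and the uniqueness theorem of F.\ and M.\ Riesz. The only points that require care are that $\varphi$ need not be bounded, since $U$ may be unbounded, and that its radial limits are allowed to take the value $\infty$.

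\emph{Existence of radial limits.} Fatou's theorem in its basic form applies to bounded holomorphic self-maps of $\mathbb{D}$, which $\varphi$ need not be. I would get around this by recalling that $\varphi$ is univalent on $\mathbb{D}$, and that every univalent map on $\mathbb{D}$ lies in the Hardy space $H^p$ for all $0<p<1/2$, and in particular in the Nevanlinna class $N$ of holomorphic functions of bounded characteristic; this is a standard consequence of the area theorem together with the Koebe distortion estimates (see e.g.\ Duren, \emph{Univalent Functions}, Ch.~2, or \cite{pommerenke}). Fatou's theorem for the Nevanlinna class then guarantees that every $f\in N$ has a finite radial limit at $\lambda$-almost every point of $\partial\mathbb{D}$; applying this to $\varphi$ gives the first assertion. (Alternatively, one could lift $\varphi$ through the universal cover $\mathbb{D}\to\widehat{\mathbb C}\smallsetminus\{a,b,c\}$, where $a,b,c\in\widehat{\mathbb C}\smallsetminus U$, to a bounded map and invoke the bounded case directly, but the Nevanlinna-class route is cleaner.)

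\emph{Uniqueness of the boundary value.} Fix $e^{i\theta}$ for which $w_0:=\varphi^*(e^{i\theta})\in\widehat{\mathbb C}$ exists, let $E:=\{e^{i\alpha}\in\partial\mathbb{D} : \varphi^*(e^{i\alpha})=w_0\}$, and suppose toward a contradiction that $\lambda(E)>0$. The idea is to produce from $\varphi$ a non-constant element of $N$ whose radial limit vanishes on $E$: if $w_0\in\mathbb C$, take $\psi:=\varphi-w_0$; if $w_0=\infty$, take $\psi:=(\varphi-c)^{-1}$ for some $c\in\mathbb C\smallsetminus U$, which lies in $N$, being the reciprocal of the non-vanishing Nevanlinna function $\varphi-c$, and is non-constant since $\varphi$ is. In either case $\psi\in N$, $\psi$ is non-constant, and $\psi^*\equiv 0$ on $E$. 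By the Riesz--Riesz uniqueness theorem --- in the Nevanlinna-class form due to Privalov: a function of bounded characteristic whose radial limit is $0$ on a set of positive measure is identically zero --- we conclude $\psi\equiv 0$, contradicting that $\psi$ is non-constant. Hence $\lambda(E)=0$, which is the second assertion.

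\emph{Where the difficulty lies.} There is no deep obstacle here, since the content is entirely classical; the only genuine care needed is the bookkeeping just described --- passing to the Nevanlinna class to accommodate an unbounded $U$, and using the substitution $\varphi\mapsto(\varphi-c)^{-1}$ to bring the value $\infty$ (and the possible zeros of $\varphi$) within the scope of the Riesz brothers' theorem --- after which Fatou's theorem and the uniqueness theorem apply directly.
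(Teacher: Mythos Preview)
The paper does not supply a proof of this theorem at all: it is quoted as a classical result with a reference to \cite[Thm.~17.4]{milnor}, and the exposition moves on immediately to prime ends. So there is no ``paper's own proof'' to compare against.

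Your outline is correct and is exactly the standard route (Fatou for existence, F.\ and M.\ Riesz / Privalov for uniqueness), with the right bookkeeping to handle unbounded $U$ and the value $\infty$. One remark: your justification that $(\varphi-c)^{-1}\in N$ because it is ``the reciprocal of a non-vanishing Nevanlinna function'' is valid --- a zero-free $f\in N$ can be written as $h_1/h_2$ with $h_1,h_2\in H^\infty$ both zero-free, so $1/f=h_2/h_1\in N$ --- but it is even quicker to observe that $(\varphi-c)^{-1}$ is itself univalent on $\mathbb{D}$ (since $c\notin U$), hence lies in $H^p$ for $p<1/2$ by the same argument you used for $\varphi$.
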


Prime ends give a more geometrical approach to the same concepts, and are defined as follows.  Consider a simply connected domain $ U $, and fix a basepoint $ z_0\in U $. We say that $ D $ is a \textit{crosscut} in $ U $ if $ D $ is an open Jordan arc in $ U $ such that $ \overline{D}=D\cup \left\lbrace a,b\right\rbrace  $, with $ a,b\in \partial U $; we allow $ a=b $. If $ D $ is a crosscut of $ U $ and  $ z_0\notin D $, then $ U\smallsetminus D $ has exactly one component which does
not contain $ z_0 $; let us denote this component by $ U_D $. We say that $ U_D $ is a \textit{crosscut neighbourhood} in $ U $ associated to $ D $.

A \textit{null-chain} in $ U $ is a sequence of  crosscuts $ \left\lbrace D_n\right\rbrace _n $ with disjoint closures, such that the corresponding crosscut neighbourhoods are nested, i.e. $ U_{D_{n+1}}\subset   U_{D_{n}}$  for $ n\geq 0 $; and  the spherical diameter of $ D_n $ tends to zero as $ n\to\infty $. We say that two null-chains $ \left\lbrace D_n\right\rbrace _n $ and $ \left\lbrace D'_n\right\rbrace _n $  are equivalent if, for every sufficiently large $ m $, there exists $ n $ such that $ U_{D_{n}}\subset   U_{D'_{m}} $ and $ U_{D'_{n}}\subset   U_{D_{m}} $. This defines an equivalence relation between null-chains.
The equivalence classes are called the \textit{prime ends} of $ U $.
The impression of a prime end $ P $ is defined as \[I(P)\coloneqq \bigcap\limits_{n\geq 0} \overline{U_{D_n}}\subset\partial U.\] If $ U=\mathbb{D} $ (or any set with locally connected boundary) the impression of every prime end is a single point.
In general, a Riemann map $ \varphi\colon\mathbb{D}\to U $ gives a bijection between points in $ \partial \mathbb{D} $ and prime ends of $ U $ (Carathéodory's Theorem, \cite[Thm. 2.15]{pommerenke}). We denote by $ P(\varphi, e^{i\theta}) $ the prime end in $ U $ corresponding to $ e^{i\theta}\in\partial\mathbb{D} $.

Given a prime end $ P $, we say that $ w\in\widehat{\partial }U $ is a \textit{principal point} of $ P $, if $ P $ can be
represented by a null-chain $ \left\lbrace D_n\right\rbrace _n $ satisfying that, for all $ r>0 $, there exists $ n_0 $ such that the crosscuts
$ D_n$ are contained in the disk $ D(w, r) $ for $ n\geq n_0 $.
Let $ \varPi(P) $ denote the set of all principal points of $ P $.

The following theorem gives explicitly the relation between cluster sets and prime ends, and between radial cluster sets and principal points. 

\begin{thm}{\bf (Prime ends and cluster sets, \normalfont{\cite[Thm. 2.16]{pommerenke}}\bf )}\label{thm-prime-ends}
	Let $ \varphi\colon\mathbb{D}\to U $ be a Riemann map, and let $ e^{i\theta}\in \partial\mathbb{D} $. Then, 
	\[ I(P(\varphi, e^{i\theta}))=Cl(\varphi, e^{i\theta}),\textrm{ and } \varPi(P(\varphi, e^{i\theta}))=Cl_\rho(\varphi, e^{i\theta}).\]
	
%	Moreover, if $ \Delta $ is any Stolz angle at $ e^{i\theta} $ it holds \[Cl_\rho(\varphi, e^{i\theta})=\left\lbrace w\in\widehat{\mathbb{C}}\colon \textrm{ there exists }\left\lbrace z_n\right\rbrace _n\subset \Delta\textrm{ with }z_n\to e^{i\theta}\textrm{ and } \varphi(z_n)\to w\right\rbrace .\]
	\end{thm}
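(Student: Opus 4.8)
The plan is to reconstruct Carathéodory's prime‑end dictionary directly from the definitions of null‑chain, impression and principal point given above. Fix $ e^{i\theta}\in\partial\mathbb{D} $, write $ P\coloneqq P(\varphi,e^{i\theta}) $, and fix the basepoint $ z_0\coloneqq\varphi(0) $. First I would set up a \emph{canonical null‑chain}: for $ r>0 $ let $ \sigma(r)\coloneqq\{z\in\mathbb{D}:|z-e^{i\theta}|=r\} $, and let $ V(r) $ be the component of $ \mathbb{D}\smallsetminus\sigma(r) $ whose closure contains $ e^{i\theta} $, so that $ \bigcap_{r>0}\overline{V(r)}\cap\partial\mathbb{D}=\{e^{i\theta}\} $. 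A length--area estimate — Cauchy--Schwarz applied to the spherical length of $ \varphi(\sigma(r)) $, integrated against $ dr/r $ and bounded by the finite spherical area of $ U $ — produces radii $ r_n\downarrow 0 $ along which the spherical diameter of $ \varphi(\sigma(r_n)) $ tends to $ 0 $. Then $ \{\varphi(\sigma(r_n))\}_n $ is a null‑chain with crosscut neighbourhoods $ \varphi(V(r_n)) $, and by Carathéodory's Theorem it represents $ P $; this chain is the reference object for both equalities.

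For $ I(P)=Cl(\varphi,e^{i\theta}) $ I would use that the impression is independent of the representative, so $ I(P)=\bigcap_n\overline{\varphi(V(r_n))} $. If $ w\in I(P) $, a diagonal choice $ z_n\in V(r_n) $ with $ |\varphi(z_n)-w|<1/n $ forces $ z_n\to e^{i\theta} $ (as $ \overline{V(r_n)} $ shrinks to $ e^{i\theta} $), hence $ w\in Cl(\varphi,e^{i\theta}) $. Conversely, if $ z_n\to e^{i\theta} $ and $ \varphi(z_n)\to w $, then for each fixed $ m $ one has $ z_n\in V(r_m) $ for all large $ n $ (since $ V(r_m)\supset\{z\in\mathbb{D}:|z-e^{i\theta}|<r_m\} $), so $ w\in\overline{\varphi(V(r_m))} $, and intersecting over $ m $ gives $ w\in I(P) $.

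For $ \varPi(P)=Cl_\rho(\varphi,e^{i\theta}) $ I would argue both inclusions. To show $ \varPi(P)\subseteq Cl_\rho $: given $ w\in\varPi(P) $, take a representative null‑chain $ \{D_n\} $ with $ D_n\subset D(w,1/n) $; equivalence of null‑chains makes the neighbourhoods $ U_{D_n} $ and $ \varphi(V(r_m)) $ mutually cofinal, so the image $ \varphi(\rho) $ of the radius $ \rho\coloneqq\{te^{i\theta}:0\le t<1\} $, which is eventually inside every $ \varphi(V(r_m)) $, is eventually inside every $ U_{D_n} $. Since $ z_0\notin U_{D_n} $, the curve $ \varphi(\rho) $ must cross $ D_n $ at some $ \varphi(t_ne^{i\theta})\in D(w,1/n) $, and $ t_n\to 1 $ (a subsequential limit $ t_\ast<1 $ would put $ w=\varphi(t_\ast e^{i\theta})\in U $, contradicting $ w\in I(P)\subset\partial U $); hence $ w\in Cl_\rho(\varphi,e^{i\theta}) $. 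For $ \varPi(P)\supseteq Cl_\rho $: given $ t_n\uparrow 1 $ with $ \varphi(t_ne^{i\theta})\to w $, I would thin $ \{t_n\} $ so that each window $ (1-t_{n+1},1-t_n) $ contains, by the length--area bound, a radius $ \rho_n $ with $ \varphi(\sigma(\rho_n)) $ of small spherical diameter; then $ \sigma(\rho_n) $ meets the radial sub‑arc between $ t_ne^{i\theta} $ and $ t_{n+1}e^{i\theta} $, and a Gehring--Hayman type comparison gives that $ \varphi $ of that sub‑arc has diameter $ \lesssim|\varphi(t_ne^{i\theta})-\varphi(t_{n+1}e^{i\theta})|\to 0 $, so $ \varphi(\sigma(\rho_n))\subset D(w,\varepsilon_n) $ with $ \varepsilon_n\to 0 $. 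Passing to a subsequence so the closures are disjoint and the neighbourhoods nested, this null‑chain separates $ e^{i\theta} $ from $ z_0 $ exactly as the canonical one, so it represents $ P $ and witnesses $ w\in\varPi(P) $.

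The hard part will be the inclusion $ \varPi(P)\supseteq Cl_\rho $: one must manufacture crosscuts of $ \mathbb{D} $ whose $ \varphi $‑images are \emph{simultaneously} of vanishing diameter and anchored near $ w $, and then certify that the chain they form is equivalent to the canonical one, so that it represents $ P $ itself rather than a neighbouring prime end clustering at $ e^{i\theta} $. This is precisely where a genuine metric estimate on $ \varphi $ (length--area together with a Gehring--Hayman/Koebe comparison of radial images) is needed; all the remaining inclusions are routine planar topology with crosscut neighbourhoods.
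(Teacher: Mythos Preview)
The paper does not prove this theorem; it is quoted as background from Pommerenke \cite[Thm.~2.16]{pommerenke} and used as a black box. So there is no ``paper's proof'' to compare against, and your proposal must be judged on its own.

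Your construction of the canonical null-chain via length--area, the identification $I(P)=Cl(\varphi,e^{i\theta})$, and the inclusion $\varPi(P)\subseteq Cl_\rho(\varphi,e^{i\theta})$ are all fine. The gap is exactly where you flagged it, in $Cl_\rho\subseteq\varPi(P)$, and it is a genuine one: the Gehring--Hayman inequality does not say what you claim. Gehring--Hayman bounds the length of the image of the hyperbolic geodesic by a constant times the \emph{internal} distance $d_U(\varphi(t_ne^{i\theta}),\varphi(t_{n+1}e^{i\theta}))$, not by the Euclidean distance $|\varphi(t_ne^{i\theta})-\varphi(t_{n+1}e^{i\theta})|$. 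When $Cl_\rho(\varphi,e^{i\theta})$ is a nondegenerate continuum --- precisely the interesting case --- the image of the radius oscillates, so even if $\varphi(t_ne^{i\theta})$ and $\varphi(t_{n+1}e^{i\theta})$ are both close to $w$, the radial sub-arc between them can (and typically will) make a large excursion to other points of $Cl_\rho$. Hence you cannot locate $\varphi(\sigma(\rho_n))$ near $w$ this way: the intersection point $\varphi(s_ne^{i\theta})$ with $s_n\in(t_n,t_{n+1})$ need not be close to $w$ at all.

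The standard fix (and Pommerenke's actual argument) is to build the null-chain on the \emph{image side} rather than pushing forward arcs from $\mathbb{D}$: for small $r>0$ one takes the component of $U\cap\{|z-w|<r\}$ that contains $\varphi(t_ne^{i\theta})$ for large $n$, and shows that a suitable arc of $\partial D(w,r)\cap U$ bounding this component is a crosscut of $U$ of diameter $\leq 2r$. These crosscuts automatically lie in $D(w,r)$, so no metric comparison is needed to locate them; one then checks that the resulting null-chain is equivalent to your canonical one because the image of the radius is eventually inside each crosscut neighbourhood. This bypasses the obstruction entirely.
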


When $ \partial U $ is non-locally connected,  not all points in $ \partial U $ are accessible from $ U $.

\begin{defi}{\bf (Accessible point and access)}
	Given an open subset $ U\subset\widehat{\mathbb{C}} $, 
	a point $ v\in\widehat{\partial} U $ is \textit{accessible} from $ U $ if there is a path $ \gamma\colon \left[ 0,1\right) \to U $ such that $ \lim\limits_{t\to 1} \gamma(t)=v $. We also say that $ \gamma $ \textit{lands} at $ v $.

\noindent Given $ z_0\in U $ and $ v\in\widehat{\partial} U $, a homotopy class (with fixed endpoints) of curves $ \gamma\colon \left[ 0,1\right] \to\widehat{\mathbb{C}} $ such that $ \gamma(\left[ 0,1\right))\subset U $, $ \gamma(0)=z_0 $ and $ \gamma(1)=v $ is called an \textit{access} from $ U $ to $ v $.
\end{defi}

A classical result about Riemann maps is the following. 

\begin{thm}{\bf (Lindelöf Theorem, {\normalfont \cite[Thm. I.2.2]{Carleson-Gamelin}})}\label{thm-lindelof}
	Let  $ \gamma\colon \left[ 0,1\right) \to U $  be a curve which lands at a point $ v\in\widehat{\partial} U $. Then, the curve $ \varphi^{-1}(\gamma) $ in $ \mathbb{D} $ lands at some point $e^{i\theta}\in\partial \mathbb{D} $. Moreover, $ \varphi $ has the radial limit at $ e^{i\theta} $ equal to $ v $. In particular, curves that land at different points in $ \widehat{\partial} U $ correspond to curves which land at different points of $ \partial\mathbb{D} $.
\end{thm}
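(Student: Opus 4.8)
The plan is to reduce everything to a single statement about the curve $\sigma:=\varphi^{-1}\circ\gamma\colon[0,1)\to\mathbb{D}$, namely that $\sigma$ lands at a single point of $\partial\mathbb{D}$; the claims about the radial limit and about distinct landing points then follow quickly. As a preliminary observation, since $\gamma(t)\to v\in\widehat{\partial}U$ with $v\notin U$, the curve $\gamma$ eventually leaves every compact subset of $U$, and because $\varphi$ is a homeomorphism of $\mathbb{D}$ onto $U$ the same is true of $\sigma$ in $\mathbb{D}$; hence $|\sigma(t)|\to1$ as $t\to1$. Consequently the cluster set $E:=\bigcap_{0\le s<1}\overline{\sigma([s,1))}$ (closure in $\overline{\mathbb{D}}$) is a non-empty compact connected subset of $\partial\mathbb{D}$, so it is either a single point, a proper closed subarc, or all of $\partial\mathbb{D}$. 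The core of the proof is to rule out the last two possibilities.

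Assume then that $E$ is not a single point, and choose $t_0$ close to $1$ together with a continuous branch $\Theta(t)$ of $\arg\sigma(t)$ on $[t_0,1)$. Since $\sigma$ does not converge in argument (it accumulates at two distinct points of $\partial\mathbb{D}$), one has $a:=\liminf_{t\to1}\Theta(t)<\limsup_{t\to1}\Theta(t)=:b$ in $\overline{\mathbb{R}}$. If $b-a<2\pi$, then, since $|\sigma(t)|\to1$, $E$ is exactly the arc $\{e^{i\theta}:a\le\theta\le b\}$, and by the intermediate value theorem applied to $\Theta$, for every $\theta_0\in(a,b)$ there is a sequence $t_k\to1$ with $\Theta(t_k)=\theta_0$, i.e. $\sigma(t_k)=r_ke^{i\theta_0}$ with $0<r_k<1$ and $r_k\to1$. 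If instead $b-a\ge2\pi$ (which includes $E=\partial\mathbb{D}$ and the case where $\Theta$ is unbounded), the same conclusion holds: for $\lambda$-a.e. $e^{i\theta_0}\in\partial\mathbb{D}$ there is an integer $n$ with $a<\theta_0+2\pi n<b$, and $\Theta$ attains the value $\theta_0+2\pi n$ along a sequence of times $\to1$. In all cases we obtain a subarc $J\subset\partial\mathbb{D}$ of positive length such that for each $e^{i\theta_0}\in J$ there are $t_k\to1$ with $\sigma(t_k)=r_ke^{i\theta_0}$, $r_k\to1$.

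Now the contradiction is obtained by a collision of radial limits. By Theorem~\ref{thm-FatouRiez}, $\varphi^*(e^{i\theta_0})$ exists for $\lambda$-a.e. $e^{i\theta_0}\in J$; fix such a point. On one hand $\varphi(\sigma(t_k))=\varphi(r_ke^{i\theta_0})\to\varphi^*(e^{i\theta_0})$ since $r_k\to1$; on the other hand $\varphi(\sigma(t_k))=\gamma(t_k)\to v$ since $t_k\to1$. Hence $\varphi^*(e^{i\theta_0})=v$ for $\lambda$-a.e. $e^{i\theta_0}\in J$, so $\varphi^*\equiv v$ on a set $S\subset\partial\mathbb{D}$ of positive Lebesgue measure. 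This contradicts the second part of Theorem~\ref{thm-FatouRiez}: for any fixed $e^{i\theta_1}\in S$ the radial limit $\varphi^*(e^{i\theta_1})=v$ exists, yet $\varphi^*(e^{i\alpha})=v=\varphi^*(e^{i\theta_1})$ holds for all $e^{i\alpha}$ in the positive-measure set $S$. Therefore $E$ is a single point $e^{i\theta^*}$, which means $\sigma(t)\to e^{i\theta^*}$ as $t\to1$.

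It remains to identify the radial limit and to deduce the last assertion. The curve $\sigma$ lands at $e^{i\theta^*}$ and $\varphi(\sigma(t))=\gamma(t)\to v$ along it; since $\varphi$, being univalent, omits at least three points of $\widehat{\mathbb{C}}$ and is therefore a normal meromorphic function, the classical Lindelöf theorem for normal functions gives that $\varphi$ has angular limit $v$ at $e^{i\theta^*}$, in particular $\varphi^*(e^{i\theta^*})=v$. Finally, if $\gamma_1$ and $\gamma_2$ land at distinct points $v_1\ne v_2$ of $\widehat{\partial}U$, then $\varphi^{-1}\circ\gamma_j$ lands at $e^{i\theta_j}$ with $\varphi^*(e^{i\theta_j})=v_j$, so $e^{i\theta_1}\ne e^{i\theta_2}$. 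I expect the main obstacle to be the bookkeeping in the second paragraph — confirming that a non-degenerate cluster set forces $\Theta$ to sweep out an interval and hence $\sigma$ to cross a whole family of radii at times arbitrarily close to $1$ — while the rest is soft topology together with one application of each half of Theorem~\ref{thm-FatouRiez} and the classical Lindelöf theorem.
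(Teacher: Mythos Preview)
The paper does not prove this theorem: it is stated with a citation to \cite[Thm.~I.2.2]{Carleson-Gamelin} and used as a black box throughout, so there is no ``paper's own proof'' to compare against.

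Your argument is correct and is essentially the standard proof. The reduction to showing that the accumulation set $E\subset\partial\mathbb{D}$ of $\sigma=\varphi^{-1}\circ\gamma$ is a singleton is the right move; the intermediate value argument on a continuous branch of $\arg\sigma$ correctly produces, for each $\theta_0$ in an open subarc, a sequence of radial hits $\sigma(t_k)=r_ke^{i\theta_0}$ with $r_k\to1$; and the collision with Theorem~\ref{thm-FatouRiez} (radial limits exist a.e.\ and are a.e.\ distinct) gives the contradiction. The final step---upgrading the limit along $\sigma$ to the radial limit via normality of the univalent map $\varphi$---is also standard (Lehto--Virtanen/Lindel\"of for normal functions). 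One cosmetic remark: you do not actually need the full normal-function machinery at the end, since the argument you just ran shows that the curve $\sigma$ hits each radius near $e^{i\theta^*}$ at radii tending to $1$, but your appeal to normality is perfectly valid and keeps the write-up short.
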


 Accessible points (and accesses) are in bijection with points in $ \partial \mathbb{D} $ for which $ \varphi^* $ exists, as it is shown in the following well-known theorem \cite[p. 35, Ex. 5]{pommerenke}. For a complete proof, see \cite{bfjk-accesses}.

\begin{thm}{\bf (Correspondence Theorem)}\label{correspondence-theorem} Let $ U\subset\widehat{\mathbb{C}} $ be a simply connected domain, $ \varphi\colon\mathbb{D}\to U $ a Riemann map, and let $ p\in\partial U $. Then, there is a one-to-one correspondence between accesses from $ U $ to $ p $ and the points $ e^{i\theta}\in \partial \mathbb{D} $ such that $ \varphi^*(e^{i\theta})=p $. The correspondence is given as follows.\begin{enumerate}[label={\em (\alph*)}]
		\item If $ \mathcal{A} $ is an access to $ p\in\partial U $, then there is a point $ e^{i\theta}\in\partial \mathbb{D} $ with $ \varphi^*(e^{i\theta})=p  $. Moreover, different accesses correspond to different points in $ \partial\mathbb{D} $.
		\item If, at a point $ e^{i\theta}\in\partial\mathbb{D} $, the radial limit $ \varphi^* $ exists and it is equal to $ p\in \partial U $, then there exists an access $ \mathcal{A} $ to $ p $. Moreover, for every curve $ \eta\subset \mathbb{D} $ landing at $ e^{i\theta} $, if $ \varphi(\eta) $ lands at some point $ q\in\widehat{\mathbb{C}} $, then $ p=q $ and $ \varphi(\eta)\in \mathcal{A} $.
	\end{enumerate}
\end{thm}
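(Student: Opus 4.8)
The plan is to establish the asserted bijection between accesses from $U$ to $p$ and the set $S:=\{e^{i\theta}\in\partial\mathbb{D}:\varphi^*(e^{i\theta})=p\}$; parts (a) and (b) then follow formally. Normalising $\varphi(0)=z_0$, to each $e^{i\theta}\in S$ I associate the curve $\varphi(\rho_\theta)$, where $\rho_\theta:=[0,e^{i\theta})$; it lands at $p$ by the definition of the radial limit, hence determines an access $\mathcal{A}(e^{i\theta})$ to $p$. The reverse assignment uses Lindelöf's Theorem (\ref{thm-lindelof}): given an access $\mathcal{A}$ with a representative $\gamma$, the curve $\varphi^{-1}(\gamma)$ lands at some $e^{i\theta}\in\partial\mathbb{D}$ with $\varphi^*(e^{i\theta})=p$. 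The same observation disposes of the ``$q=p$'' assertion of (b): if $\eta\subset\mathbb{D}$ lands at $e^{i\theta}$ and $\varphi(\eta)$ lands at $q$, then $q\in\widehat{\partial}U$ (as $\varphi(\eta)$ eventually leaves every compact subset of $U$), so \ref{thm-lindelof} applied to $\varphi(\eta)$ shows $\eta=\varphi^{-1}(\varphi(\eta))$ lands at a point with radial limit $q$; since that point is $e^{i\theta}$, we get $q=\varphi^*(e^{i\theta})=p$.

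Everything then reduces to two statements: \emph{(i)} if $e^{i\alpha},e^{i\beta}\in S$ with $\alpha\neq\beta$, then $\varphi(\rho_\alpha)$ and $\varphi(\rho_\beta)$ are not homotopic rel endpoints in $U\cup\{p\}$; and \emph{(ii)} if $\gamma:[0,1)\to U$ lands at $p$ and $\varphi^{-1}(\gamma)$ lands at $e^{i\theta}$, then $\gamma$ is homotopic rel endpoints to $\varphi(\rho_\theta)$ in $U\cup\{p\}$. Granting these, the map $\mathcal{A}\mapsto e^{i\theta}$ is well defined (two representatives of $\mathcal{A}$ have lifts landing at $e^{i\alpha},e^{i\beta}$; by (ii) both $\varphi(\rho_\alpha),\varphi(\rho_\beta)\in\mathcal{A}$, so $\alpha=\beta$ by (i)), injective by (i), and surjective by (ii); the ``moreover'' clauses of (a) and (b) follow the same way.

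For (i): since $\varphi^*(e^{i\alpha})=\varphi^*(e^{i\beta})=p$, the set $L:=\overline{\rho_\alpha}\cup\overline{\rho_\beta}$ is a crosscut of $\mathbb{D}$ joining $e^{i\alpha}$ to $e^{i\beta}$ through $0$, and $\widehat\eta:=\varphi(L\cap\mathbb{D})\cup\{p\}$ is a Jordan curve in $\widehat{\mathbb{C}}$ — the two arcs close up continuously at $p$ precisely because these radial limits exist and equal $p$. By \ref{thm-jordan-curve}, $\widehat\eta$ separates $\widehat{\mathbb{C}}$ into domains $\Omega_1,\Omega_2$, and the two components $\varphi(W_1),\varphi(W_2)$ of $U\setminus\widehat\eta$ (where $W_1,W_2$ are the components of $\mathbb{D}\setminus L$) lie in distinct $\Omega_j$. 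As $\overline{\varphi(W_1)}\cup\overline{\varphi(W_2)}\supseteq\overline{U}\supseteq\partial U$ and $\partial U$ has more than one point (a simply connected $U\subsetneq\widehat{\mathbb{C}}$ admitting a Riemann map is neither $\widehat{\mathbb{C}}$ nor $\widehat{\mathbb{C}}$ minus finitely many points), some $\overline{\varphi(W_j)}$ contains a point $\zeta\in\partial U$ with $\zeta\neq p$; since $\widehat\eta\setminus\{p\}\subset U$, this forces $\zeta\in\Omega_j\setminus(U\cup\{p\})$. If $\varphi(\rho_\alpha)$ and $\varphi(\rho_\beta)$ were homotopic rel endpoints in $U\cup\{p\}$, then the loop $\varphi(\rho_\alpha)\ast\overline{\varphi(\rho_\beta)}=\widehat\eta$ would be null-homotopic in $U\cup\{p\}\subseteq\widehat{\mathbb{C}}\setminus\{\zeta\}$, contradicting that $\widehat\eta$ has winding number $\pm1$ about $\zeta$.

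Statement (ii) is the main obstacle. One cannot simply transport a homotopy across $\varphi$, since $\varphi$ need not extend continuously to $e^{i\theta}$ (the full cluster set $Cl(\varphi,e^{i\theta})$ may be a nondegenerate continuum), so a homotopy in $\mathbb{D}$ pushed forward need not have boundary value $p$ along the top edge. Instead I would argue through prime ends: by \ref{thm-prime-ends}, $\varphi^*(e^{i\theta})=p$ is equivalent to $\varPi\big(P(\varphi,e^{i\theta})\big)=Cl_\rho(\varphi,e^{i\theta})=\{p\}$, so $p$ is the unique principal point of $P:=P(\varphi,e^{i\theta})$, and $P$ can be represented by a null-chain $\{D_n\}$ whose crosscuts shrink to $p$. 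Both $\gamma$ and $\varphi(\rho_\theta)$ have $\varphi$-preimages landing at $e^{i\theta}$, hence both eventually enter and remain in every crosscut neighbourhood $U_{D_n}$; the desired homotopy is then built by ``combing'' $\gamma$ to $\varphi(\rho_\theta)$ through curves that follow an initial segment of $\gamma$, cross from $\gamma$ to $\varphi(\rho_\theta)$ inside $U_{D_n}$, and then follow $\varphi(\rho_\theta)$, letting $n\to\infty$ so that the crossing ``tip'' stays in $U_{D_n}$ and therefore tends to $p$. Making this precise — in particular verifying continuity of the homotopy up to $\{p\}$ — is the technical core; equivalently, one invokes the classical fact that accessible boundary points correspond bijectively to prime ends with a single principal point, for which I would follow \cite{bfjk-accesses}.
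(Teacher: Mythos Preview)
The paper does not actually prove this theorem: it is stated as a well-known result, attributed to \cite[p.~35, Ex.~5]{pommerenke} with a reference to \cite{bfjk-accesses} for a complete proof. So there is no ``paper's own proof'' to compare against; your attempt is essentially a sketch of the argument that those references carry out.

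On the substance: your reduction to statements (i) and (ii) is the standard scheme, and your treatment of (i) via the Jordan Curve Theorem is correct. The derivation of ``$q=p$'' in (b) from Lindel\"of's Theorem is clean. For (ii) you correctly identify the obstacle --- one cannot simply push forward a straight-line homotopy in $\mathbb{D}$ because $\varphi$ need not extend continuously at $e^{i\theta}$ --- and your prime-end/null-chain outline is the right idea, but as you yourself acknowledge, you have not actually carried out the continuity verification; you defer to \cite{bfjk-accesses} at the decisive step. So as written this is an outline rather than a proof, exactly matching the paper's own treatment (state and cite). If you want a self-contained argument, the point to complete is constructing the homotopy $H\colon[0,1]^2\to U\cup\{p\}$ explicitly and checking continuity at the edge $\{1\}\times[0,1]$ using that the crosscut neighbourhoods $U_{D_n}$ have closures shrinking to $p$ in the spherical metric.
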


Next, we state the following theorem, which exploits the possibility of separating sets in $ \partial U $ with arcs contained in $ U $.
\begin{thm}{\bf(Separation of simply connected domains, {\normalfont \cite[Prop. 2]{Carmona-Pommerenke}})}\label{lemma-carmona-pommerenke}
	Let $ U\subset\widehat{\mathbb{C}} $ be a simply connected domain, and let $ E\subset \widehat{\partial}U$ be a continuum. Let $ w_1,w_2 $ be points in different connected components of $ \widehat{\partial}U\smallsetminus E $. Then, there exists a Jordan arc $ \gamma\subset U $ with $ \widehat{\gamma}\smallsetminus\gamma\subset E $ such that $ \gamma\cup E $ separates $ w_1 $ and $ w_2 $ in $ \widehat{\mathbb{C}} $.
\end{thm}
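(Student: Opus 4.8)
The plan is to reduce the statement to an ``interior'' separation problem inside a simply connected domain, transport it to the unit disk, and there produce the separating arc directly; the continuum $E$ is kept track of by passing to the complementary component of $E$ containing $U$. \textbf{Step 1 (filling in $E$).} Since $E$ is a continuum, every component of $\widehat{\mathbb{C}}\smallsetminus E$ is simply connected: its complement in $\widehat{\mathbb{C}}$ is $E$ together with the closures of the remaining components, each connected and meeting $E$, so the complement is connected and Theorem~\ref{teo-topologia} applies. Let $W$ be the unique component of $\widehat{\mathbb{C}}\smallsetminus E$ containing $U$. I would record two elementary facts: $\partial W\subseteq E$ (the boundary of a component of an open set lies in the boundary of that set), and $\widehat{\partial} U\smallsetminus E\subseteq W$ (a boundary point of $U$ outside the closed set $E$ is a limit of points of $U\subseteq W$, hence lies in the open component $W$). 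Consequently $w_1,w_2\in W$ and they lie in different components of $\widehat{\partial} U\cap W$. This yields the key reduction: since any path in $\widehat{\mathbb{C}}\smallsetminus E$ joining $w_1$ and $w_2$ must remain in $W$, it is enough to produce a Jordan arc $\gamma\subseteq U$ whose two limit endpoints lie in $E\cap\partial W$ and which separates $w_1$ from $w_2$ inside $W$; such a $\gamma$ is a crosscut of $W$, and then $\gamma\cup E$ separates $w_1$ from $w_2$ in $\widehat{\mathbb{C}}$ (note $w_1,w_2\notin\gamma\cup E$, and a connecting path avoiding $E$ stays in $W$, where it must then meet $\gamma$).

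\textbf{Step 2 (transport to $\mathbb{D}$).} I would fix a Riemann map $\psi\colon\mathbb{D}\to W$ and set $G\coloneqq\psi^{-1}(U)$, a simply connected subdomain of $\mathbb{D}$ (a loop in $G$ is null-homotopic in $U$ and the homotopy pulls back to $G$). Then $\psi^{-1}(\widehat{\partial} U\cap W)=\partial G\cap\mathbb{D}$, so $b_i\coloneqq\psi^{-1}(w_i)$ lie in different components of the closed set $\partial G\cap\mathbb{D}$. It now suffices to find a crosscut $\sigma$ of $\mathbb{D}$, with $\sigma\subseteq G$, separating $b_1$ from $b_2$ in $\mathbb{D}$, whose endpoints $\zeta_1,\zeta_2\in\partial\mathbb{D}$ can be taken in the full-measure set where $\psi^*$ exists (Theorem~\ref{thm-FatouRiez}); making $\sigma$ radial near $\zeta_1,\zeta_2$, the Lindel\"of Theorem (Theorem~\ref{thm-lindelof}, applied to $\psi$) shows $\gamma\coloneqq\psi(\sigma)$ is a Jordan arc in $U$ landing at $\psi^*(\zeta_1),\psi^*(\zeta_2)\in\partial W\subseteq E$; and since $\psi$ is a homeomorphism of $\mathbb{D}$ onto $W$ carrying the (relatively closed) arc $\sigma$ onto $\gamma$, the components of $W\smallsetminus\gamma$ correspond to those of $\mathbb{D}\smallsetminus\sigma$, so $\gamma$ separates $w_1$ from $w_2$ in $W$. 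Together with Step~1 this proves the theorem.

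\textbf{Step 3 (constructing the crosscut), and the main obstacle.} What remains is a model statement: \emph{if $G\subseteq\mathbb{D}$ is a simply connected domain and $b_1,b_2$ lie in different components of the closed set $\partial G\cap\mathbb{D}$, there is a crosscut of $\mathbb{D}$ contained in $G$ off $\partial\mathbb{D}$ that separates $b_1$ from $b_2$.} I would exhaust $\mathbb{D}$ by closed disks $\overline{D(0,r_n)}$ with $r_n\uparrow1$; for large $n$, $L_n\coloneqq\partial G\cap\overline{D(0,r_n)}$ is compact and $b_1,b_2$ still lie in different components of $L_n$ (a component of $L_n$ is connected, hence sits inside one component of $\partial G\cap\mathbb{D}$), so $L_n=L_n^1\sqcup L_n^2$ with $L_n^i$ disjoint compacta and $b_i\in L_n^i$; one then routes a polygonal crosscut of $\mathbb{D}$ through $G$ enclosing $L_n^1$, using the local connectivity of $\mathbb{D}$ and the simple connectivity of $G$ to keep the routing inside $G$. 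This last step is the heart of the matter and is essentially \cite[Prop.~2]{Carmona-Pommerenke}. The delicate point --- and the reason a naive argument via the Riemann map of $U$ fails --- is that one cannot split $\partial\mathbb{D}$ (or the prime ends of $U$) into an ``arc over the component of $w_1$'' and an ``arc over the component of $w_2$'': a single cluster set $Cl(\varphi,\zeta)$ is a continuum that may meet several components of $\widehat{\partial} U\smallsetminus E$ at once (whenever it also meets $E$). The hypothesis that $w_1$ and $w_2$ lie in \emph{different} components of $\widehat{\partial} U\smallsetminus E$ is used precisely to guarantee that, after cutting down to the compacta $L_n$, the two points stay topologically separated, so that the crosscut can be threaded through $G$.
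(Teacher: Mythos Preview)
The paper does not prove this statement; it is quoted from \cite[Prop.~2]{Carmona-Pommerenke} as an external tool and used as a black box in the proof of Proposition~\ref{lemma-disconnected-cluster-sets}. There is therefore no ``paper's own proof'' to compare against.

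As for your attempt: Steps~1 and~2 are a correct and natural reduction. Passing to the component $W$ of $\widehat{\mathbb{C}}\smallsetminus E$ containing $U$, and then via a Riemann map $\psi\colon\mathbb{D}\to W$, you legitimately transform the problem into the ``model statement'' of Step~3 (find a crosscut of $\mathbb{D}$ lying in $G=\psi^{-1}(U)$ that separates the two points $b_1,b_2$ lying in different components of $\partial G\cap\mathbb{D}$). However, Step~3 is where the entire content of the theorem lives, and you do not actually prove it. The sentence ``one then routes a polygonal crosscut of $\mathbb{D}$ through $G$ enclosing $L_n^1$'' is precisely the difficulty: nothing you have said explains why such a crosscut can be kept inside $G$ while reaching $\partial\mathbb{D}$ on both ends, given that $\mathbb{D}\smallsetminus G$ and $\partial G\cap\mathbb{D}$ may be complicated. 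You then concede that ``this last step is the heart of the matter and is essentially \cite[Prop.~2]{Carmona-Pommerenke}'' --- which is the very statement you are trying to prove. So the argument, as written, is circular: you have reduced the theorem to (a mild reformulation of) itself.

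A secondary gap in Step~2: you need the endpoints $\zeta_1,\zeta_2$ of $\sigma$ not only to lie in the full-measure set where $\psi^*$ exists, but also to admit terminal radial segments contained in $G$ (so that $\psi(\sigma)$ actually lands at points of $\partial W\subset E$). This second requirement is a genuine constraint on how $G$ approaches $\partial\mathbb{D}$ and is not addressed.
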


As a consequence of the previous theorem, we describe under which conditions cluster sets are disconnected when restricted to $ \mathbb{C} $. Next proposition gives a precise characterization of disconnected cluster sets. In particular, if a radial limit achieves a finite value, the corresponding cluster set is connected in $ \mathbb{C} $.

\begin{prop}{\bf (Disconnected cluster sets)}\label{lemma-disconnected-cluster-sets}		Let $ f $ be a transcendental entire function and let $ U $ be an invariant Fatou component, such that $ \infty $ is accessible from $ U $.  Let $ \varphi\colon\mathbb{D}\to U $ be a Riemann map. Let $ e^{i\theta}\in\partial \mathbb{D} $ be such that $ Cl_\mathbb{C}(\varphi, e^{i\theta}) $ is contained in more than one component of $ \partial U $. Then, $ \varphi^*(e^{i\theta})=\infty $, and $ Cl_\mathbb{C}(\varphi, e^{i\theta}) $ is contained in exactly two components of $ \partial U $.
\end{prop}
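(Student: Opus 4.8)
The plan is to argue by contradiction using the separation result (Theorem \ref{lemma-carmona-pommerenke}) together with the fact that all cluster sets of $\varphi$ for an unbounded Fatou component contain $\infty$ (this is the result of Baker–Weinreich cited in the introduction). First I would fix $e^{i\theta}$ with $Cl_{\mathbb{C}}(\varphi,e^{i\theta})$ meeting more than one component of $\partial U$, and recall that the full cluster set $Cl(\varphi,e^{i\theta})=I(P(\varphi,e^{i\theta}))\subset\widehat{\partial}U$ is a continuum in $\widehat{\mathbb{C}}$ containing $\infty$ (Theorem \ref{thm-prime-ends}, plus the Baker–Weinreich fact). Write $E:=Cl(\varphi,e^{i\theta})$. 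The idea is that $E$, being connected in $\widehat{\mathbb{C}}$ but disconnected once $\infty$ is removed, must look like several ``arms'' all attached at $\infty$; each arm lies in a distinct component of $\partial U$ in $\mathbb{C}$.

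Next I would establish the ``exactly two'' bound. Suppose $Cl_{\mathbb{C}}(\varphi,e^{i\theta})$ meets three distinct components $C_1,C_2,C_3$ of $\partial U$; pick $w_j\in C_j\cap Cl_{\mathbb{C}}(\varphi,e^{i\theta})$. Since the $w_j$ lie in pairwise different components of $\partial U\subset\widehat{\partial}U$, and $E$ is a subcontinuum of $\widehat{\partial}U$, I would apply Theorem \ref{lemma-carmona-pommerenke} (separation of simply connected domains) repeatedly: for an appropriate continuum playing the role of $E$ in that theorem, one obtains Jordan arcs $\gamma_1,\gamma_2\subset U$ whose closures meet $\partial U$ only in $E$, such that the three points $w_1,w_2,w_3$ are separated from one another in $\widehat{\mathbb{C}}$. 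These arcs $\gamma_i\subset U$ land (Theorem \ref{thm-lindelof}) at points of $\partial\mathbb{D}$, and because they land at $\infty$-containing pieces of the cluster set of the single prime end $P(\varphi,e^{i\theta})$, their $\varphi$-preimages must land at $e^{i\theta}$ itself or at nearby points; a standard prime-end / null-chain argument then forces a contradiction with the topology of the disk — one cannot have two disjoint crosscuts of $\mathbb{D}$ both abutting the single boundary point $e^{i\theta}$ that together separate three prime-end-accessible pieces. Concretely: two disjoint arcs in $\mathbb{D}$ landing at (or accumulating only at) $e^{i\theta}$ cut $\mathbb{D}$ into at most regions that, when transported by $\varphi$, cannot accommodate three mutually separated continua in $\partial U$ all sitting in the impression of the one prime end. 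Hence at most two components.

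For the conclusion $\varphi^*(e^{i\theta})=\infty$: if $\varphi^*(e^{i\theta})$ existed and were a finite point $p$, I would use the Correspondence Theorem (Theorem \ref{correspondence-theorem}): any curve in $\mathbb{D}$ landing at $e^{i\theta}$ maps to a curve landing at $p$ (when it lands at all), which pins the ``principal'' part of the cluster set to $p\in\mathbb{C}$. Combined with a crosscut argument — a null-chain $\{D_n\}$ representing $P(\varphi,e^{i\theta})$ with $\varphi(D_n)$ having diameters controlled near $p$ — one shows $Cl_{\mathbb{C}}(\varphi,e^{i\theta})$ is then forced to be connected: it is the nested intersection $\bigcap_n \overline{U_{D_n}}\cap\mathbb{C}$ of connected sets whose boundaries shrink toward the single finite point $p$, so it cannot split across two components of $\partial U$. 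This is essentially the contrapositive of the last sentence of the statement, and gives $\varphi^*(e^{i\theta})=\infty$ whenever the cluster set in $\mathbb{C}$ is disconnected.

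The main obstacle I anticipate is making the crosscut/null-chain bookkeeping rigorous: precisely controlling how the separating arcs from Theorem \ref{lemma-carmona-pommerenke} (which live in $\widehat{\mathbb{C}}$, passing through $\infty$) pull back under $\varphi$ to curves landing at $\partial\mathbb{D}$, and showing that three mutually separated pieces of the impression of a \emph{single} prime end is topologically impossible in the disk. The cleanest route is probably to work directly with a defining null-chain $\{D_n\}$ of the prime end $P(\varphi,e^{i\theta})$: the arcs $\varphi^{-1}(D_n)$ are crosscuts of $\mathbb{D}$ shrinking to $e^{i\theta}$, so removing finitely many of them from $\mathbb{D}$, together with the separating arcs, yields a planar graph whose complementary regions are too few to host three separated continua — a purely combinatorial/planar contradiction once the setup is in place. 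The finite-radial-limit case then follows by the same null-chain picture with the extra input that the principal point is the single finite point $p$.
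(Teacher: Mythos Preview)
Your proposal has the right overall shape but misses the key simplification that makes the argument work, and the parts you flag as ``main obstacles'' are in fact genuine gaps as written.

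The crucial point you miss is the choice of $E$ in Theorem \ref{lemma-carmona-pommerenke}. You take $E=Cl(\varphi,e^{i\theta})$; the paper takes $E=\{\infty\}$. Since $w_1,w_2$ already lie in distinct components of $\partial U=\widehat{\partial}U\smallsetminus\{\infty\}$, the singleton $\{\infty\}$ is a perfectly good separating continuum, and the resulting arc $\gamma\subset U$ then satisfies $\widehat{\gamma}\smallsetminus\gamma=\{\infty\}$, i.e.\ both ends of $\gamma$ land at $\infty$. This is what lets you control $\varphi^{-1}(\gamma)$: by Lindel\"of it lands at two well-defined points of $\partial\mathbb{D}$, and a null-chain argument (since both $w_1$ and $w_2$ are approached from within every crosscut neighbourhood of $e^{i\theta}$, the curve $\gamma$ must enter every such neighbourhood) forces one of those landing points to be $e^{i\theta}$. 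The Correspondence Theorem then gives $\varphi^*(e^{i\theta})=\infty$ immediately. With your choice of $E$ the endpoints of $\gamma$ could be arbitrary points of the cluster set, and your claim that the preimages ``must land at $e^{i\theta}$ itself or at nearby points'' is not justified.

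Your separate argument for $\varphi^*(e^{i\theta})=\infty$ via connectivity of $\bigcap_n\overline{U_{D_n}}\cap\mathbb{C}$ does not work: even if the crosscuts $D_n$ shrink to a finite principal point $p$, the crosscut \emph{neighbourhoods} $U_{D_n}$ can still be unbounded, and removing $\infty$ from the nested intersection of their closures can disconnect it. Nothing in ``boundaries shrink toward $p$'' prevents this. Likewise, your ``at most two'' step is only a sketch: the actual contradiction in the paper is that two disjoint separating arcs $\gamma,\gamma'\subset U$, both with $\varphi^{-1}$ landing at $e^{i\theta}$, are shown (via a crosscut $\varphi(D_n)$ and the fact that $\widetilde{\gamma}$ separates one $C_i$ from the others) to define \emph{different} accesses to $\infty$, which violates the Correspondence Theorem. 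You should reorganize: first get $\varphi^*(e^{i\theta})=\infty$ from a single separating arc with $E=\{\infty\}$, then repeat to rule out a third component.
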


\begin{proof}
	Consider  $ C_1, C_2 $ connected components of $ \partial U $, such that both intersect $ Cl_\mathbb{C}(\varphi, e^{i\theta}) $. Set  $ w_1\in C_1\cap  Cl_\mathbb{C}(\varphi, e^{i\theta})  $, $ w_2\in C_2\cap  Cl_\mathbb{C}(\varphi, e^{i\theta}) $. Now, apply Theorem \ref{lemma-carmona-pommerenke}, with $ E=\left\lbrace \infty\right\rbrace  $ and $ w_1, w_2 $ chosen before, which lie on different connected components of $ \widehat{\partial} U\smallsetminus \left\lbrace \infty\right\rbrace =\partial U $. Hence, there exists a simple arc $ \gamma \subset U$, 
	such that $ \widehat{\gamma}\smallsetminus\gamma =\left\lbrace \infty\right\rbrace   $ and $ \widehat{\gamma} $ separates $ w_1 $ and $ w_2 $ in $ \widehat{\mathbb{C}} $. 
	
	It remains to see that $ \varphi^{-1}(\gamma) $ has one endpoint at $ e^{i\theta} $, and then the Correspondence Theorem \ref{correspondence-theorem} would imply that $  \varphi^*(e^{i\theta})=\infty $. See Figure \ref{fig-cluster-sets-disconnected1}.
	
	\begin{figure}[htb!]\centering
		\includegraphics[width=12cm]{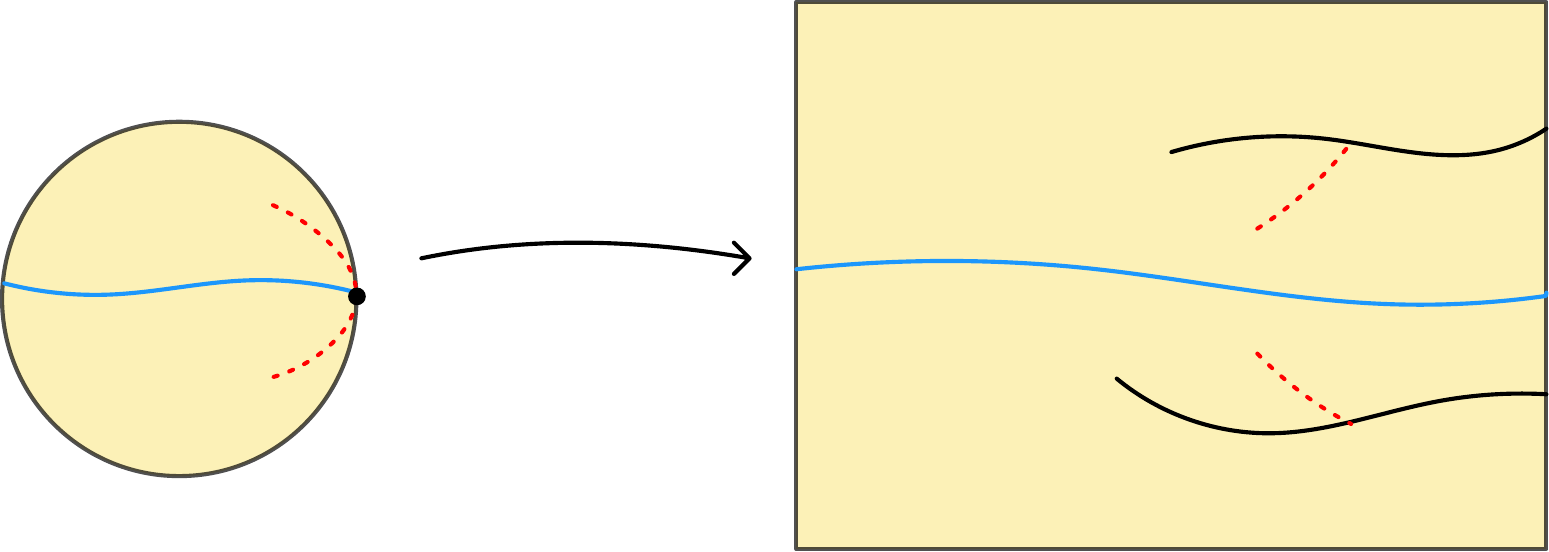}
		\setlength{\unitlength}{12cm}
		\put(-0.76, 0.155){$e^{i\theta}$}
		\put(-0.25, 0.275){$ C_1 $}
		\put(-0.4, 0.2){$ \gamma $}
		\put(-0.25, 0.06){$ C_2 $}
		\put(-0.4, 0.275){$ \Omega_1 $}
		\put(-0.4, 0.06){$ \Omega_2 $}
		\put(-0.15, 0.22){$ z^1_n$}
		\put(-0.15, 0.1){$ z^2_n$}
		\put(0.005, 0.05){ $U$}
		\put(-1, 0.05){ $\mathbb{D}$}
		\put(-0.65, 0.21){ $\varphi$}
		\caption{\footnotesize Diagram of the setup of the second part of the proof of Lemma \ref{lemma-disconnected-cluster-sets}, when it is shown that, if $ Cl_\mathbb{C}(\varphi, e^{i\theta}) $ is disconnected, then $ \varphi^*(e^{i\theta})=\infty $. }\label{fig-cluster-sets-disconnected1}
	\end{figure}
	
	Since $ \widehat{\gamma} $ is a closed simple curve in $ \widehat{\mathbb{C}} $, by the Jordan Curve Theorem \ref{thm-jordan-curve},  $ \widehat{\mathbb{C}}\smallsetminus\widehat{\gamma} $ has exactly two connected components, say $ \Omega_1 $ and $ \Omega_2 $, with $ C_i\subset \Omega_i\subset \mathbb{C} $, for $ i=1,2 $. Moreover, since $ \widehat{\gamma}\subset U\cup \left\lbrace \infty\right\rbrace  $, each $ U_i\coloneqq \Omega_i\cap U $ is non-empty and connected.
	Hence, there exists a sequence of points $ \left\lbrace z_n^i\right\rbrace _n $ in $ U_i $ converging to $ w_i $. Since $ w_i\in Cl_\mathbb{C} (\varphi, e^{i\theta}) $, we can assume that the sequences  $ \left\lbrace z_n^i\right\rbrace _n $ have been chosen so that $ \left\lbrace \varphi^{-1}(z_n^i)\right\rbrace _n $ both converge to $ e^{i\theta} \in\partial\mathbb{D}$.
	
	Now, consider a null-chain $ \left\lbrace \mathbb{D}_{D_n}\right\rbrace _n $ in $ \mathbb{D} $, given by the sequence of crosscuts $ \left\lbrace D_n\right\rbrace _n $ converging to $ e^{i\theta} $, and such that $ U_{\varphi(D_n)} \coloneqq\varphi (\mathbb{D}_{D_n}) $ gives a null-chain in $ U $. For the existence of such null-chain, we refer to \cite[Lemma 17.9]{milnor}. For all $ n\geq 0 $, there is $m_n $ such that $ z^i_{m_n}\in U_{\varphi(D_n)} $, for $ i=1,2 $. Hence, for all $ n\geq 0 $, there exists $ z_{n}\in\gamma\cap U_{\varphi(D_n)} $. 
	
	Observe that, by the Correspondence Theorem \ref{correspondence-theorem}, $ \varphi^{-1}(\gamma) $ lands at two different points $ e^{i\alpha_1} , e^{i\alpha_2}\in\partial\mathbb{D}$.  Hence, for every null-chain in $ \mathbb{D} $ not corresponding to $ e^{i\alpha_1} $ nor $ e^{i\alpha_2}$, $ \varphi^{-1}(\gamma)  $ intersects only a finite number of crosscut neighbourhoods of it. Since $ \varphi^{-1}(\gamma)  $ intersects every $ \mathbb{D}_{D_n} $, it follows that either $ e^{i\alpha_1}=e^{i\theta} $, or $ e^{i\alpha_2}=e^{i\theta} $, so $ \varphi^{-1} (\gamma)$ lands at $ e^{i\theta} $, as desired.
	
	Next, we shall prove that $ Cl_\mathbb{C}(\varphi, e^{i\theta}) $ is contained in exactly of two connected components of $ \partial U $. Assume, on the contrary, that there exists $ C_1 $, $ C_2 $ and $ C_3 $ connected components of $ \partial U $ which intersect $ Cl_\mathbb{C}(\varphi, e^{i\theta}) $. By the previous argument, there exists a simple arc $ \gamma \subset U$, separating $ \mathbb{C} $ into two connected components, $ \Omega_1 $ and $ \Omega_2 $, with 
	$ C_i\subset \Omega_i $, for $ i=1,2 $. Since $ \widehat{\gamma}\subset U\cup \left\lbrace \infty\right\rbrace  $, $ C_3 $ is either contained in $ \Omega_1 $ or in $ \Omega_2 $. Without loss of generality, assume $ C_3\subset\Omega_1 $. 
	Now, let us consider $ U_1\coloneqq U\cap \Omega_1 $, which is connected, simply connected, and $ C_1$ and $ C_3 $ are different connected components of $ \partial U_1 $. Hence, there exists 
	$ \gamma' \subset U_1$, separating $ C_1 $ and $ C_3 $. Both curves $ \gamma $ and $ \gamma' $ are disjoint, $ \widehat{\gamma}\cup \widehat{\gamma'}=\left\lbrace \infty\right\rbrace   $, and $ \varphi^{-1}(\gamma(t))\to e^{i\theta}  $ and $ \varphi^{-1}(\gamma(t))\to e^{i\theta}  $, as $ t\to+\infty $. Hence, $ \mathbb{C}\smallsetminus \left( \gamma\cup \gamma'\right)  $ consists precisely of three connected components, each of them containing exactly one $ C_i $. See Figure \ref{fig-cluster-sets-disconnected2}.
	
	\begin{figure}[htb!]\centering
		\includegraphics[width=12cm]{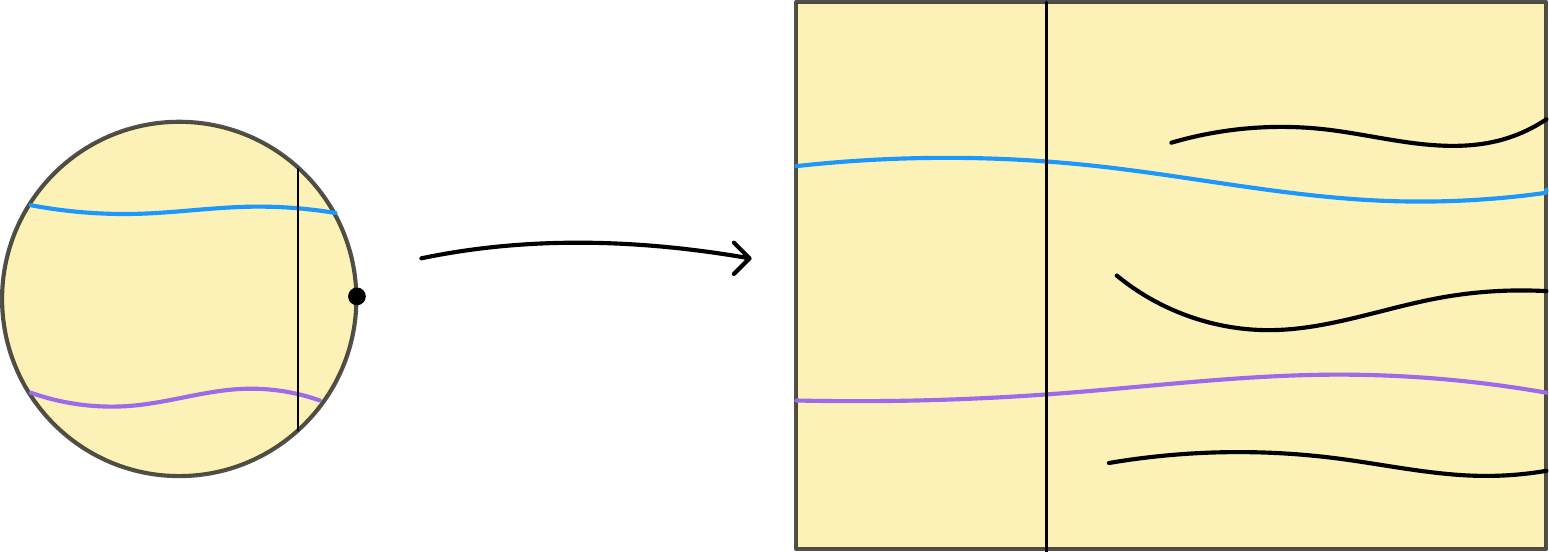}
		\setlength{\unitlength}{12cm}
		\put(-0.85, 0.155){$D_n$}
		\put(-0.32, 0.33){$\varphi(D_n)$}
		\put(-0.76, 0.155){$e^{i\theta}$}
		\put(-0.15, 0.275){$ C_1 $}
		\put(-0.4, 0.26){$ \gamma'$}
		\put(-0.4, 0.11){$ \gamma $}
		\put(-0.15, 0.03){$ C_2 $}
		\put(-0.15, 0.16){$ C_3 $}
		\put(0.005, 0.05){ $U$}
		\put(-1, 0.05){ $\mathbb{D}$}
		\put(-0.65, 0.21){ $\varphi$}
		\caption{\footnotesize Diagram of the setup of the second part of the proof of Lemma \ref{lemma-disconnected-cluster-sets}, when it is shown that $ Cl_\mathbb{C}(\varphi, e^{i\theta}) $ cannot have more that two connected components. }\label{fig-cluster-sets-disconnected2}
	\end{figure}
	
	We want to see that $ \gamma $ and $ \gamma' $ define different accesses to $ \infty $ in $ U $, leading to a contradiction with the Correspondence Theorem \ref{correspondence-theorem} (indeed, if $ \gamma $ and $ \gamma' $ define different accesses to $ \infty $ in $ U $, then $ \varphi^{-1}(\gamma) $ and $ \varphi^{-1}(\gamma') $ cannot land at the same point $ e^{i\theta}\in\partial\mathbb{D} $). 
	
	To do so, we fix a  crosscut $ \varphi(D_n) $ of the null-chain $ \left\lbrace U_{\varphi(D_n)}\right\rbrace _n $ defined above. Since both $ \varphi^{-1}(\gamma(t)) $ and $ \varphi^{-1}(\gamma'(t)) $ converge to $ e^{i\theta} $, as  $t\to \infty $, there exist $ t_\gamma $, $ t_{\gamma'} $ satisfying that
	 \[
	\gamma\left( t_\gamma \right), 	\gamma'\left(t_{\gamma'} \right) \in \varphi(D_n),
	\] 
	 \[
	\gamma\left( \left[t_\gamma, +\infty \right) \right) \cup	\gamma'\left( \left[t_{\gamma'}, +\infty \right) \right) \subset U_{\varphi(D_n)}.
	\] Denote by $ \eta $ the connected arc in $ \varphi(D_n) $ satisfying that \[
	\widetilde{\gamma}\coloneqq \eta \cup\gamma\left( \left[t_\gamma, +\infty \right) \right) \cup  \gamma'\left( \left[t_{\gamma'}, +\infty \right) \right) 
	\] is a simple arc in $U $, and $ 	\widehat{\widetilde{\gamma}} $ is a closed simple curve in $ U\cup \left\lbrace \infty\right\rbrace  $.
	Moreover, since \[\widetilde{\gamma}, C_1, C_2 , C_3\subset\overline{U_{\varphi(D_n)}},\]
	it follows that $ \widetilde{\gamma} $ separates exactly one $ C_i $ from the others. Hence, $ \gamma\left( \left[t_{\gamma}, +\infty \right) \right) $ and $ \gamma'\left( \left[t_{\gamma'}, +\infty \right) \right) $ define different accesses to $ \infty $,  although both $ \varphi^{-1}(\gamma\left( \left[t_{\gamma}, +\infty \right) \right)) $ and $ \varphi^{-1}(\gamma'\left( \left[t_{\gamma'}, +\infty \right) \right)) $ land at $ e^{i\theta} $, contradicting the Correspondence Theorem. This finishes the proof of the proposition.
\end{proof}

\subsection{Behaviour of the Riemann map on a Fatou component}

Up to now, all results hold for any Riemann map $ \varphi $ between the unit disk $ \mathbb{D} $ and a simply connected domain $ U $, with no dynamics involved.
Next, we deal with the particular case that $ U $ is an invariant Fatou component of an entire function $ f $. We give a  precise description of how radial limits and cluster sets are mapped under $ f $.

We use the following notation. 	\[\Theta_\infty\coloneqq \left\lbrace e^{i\theta}\in\partial \mathbb{D}\colon \varphi^*(e^{i\theta})=\infty\right\rbrace\] 
\[\Theta_\mathbb{C}\coloneqq \partial\mathbb{D}\smallsetminus\Theta_\infty= \left\lbrace e^{i\theta}\in\partial \mathbb{D}\colon Cl_\rho (\varphi, e^{i\theta})\neq\left\lbrace \infty\right\rbrace \right\rbrace\] 

We remark that, for points in $ \Theta_\mathbb{C} $, we are not assuming that the radial limit $ \varphi^* $ exists. Note also that, by Theorem \ref{thm-FatouRiez}, $ \lambda(\Theta_\infty)=0 $ and $ \lambda(\Theta_\mathbb{C})=1 $, where $ \lambda $ stands for the normalized Lebesgue measure in $ \partial \mathbb{D} $.

\begin{lemma}{\bf (Radial limits and cluster sets for the associated inner function)}\label{lemma-radial-limit}
	Let $ f $ be an entire function, and let $ U $ be an invariant Fatou component for $ f $. Consider $ \varphi\colon\mathbb{D}\to U $  a Riemann map, and $ g\coloneqq \varphi^{-1}\circ f\circ \varphi $ an associated inner function. Let $ e^{i\theta}\in\partial \mathbb{D} $. Then, the following holds.
	 \begin{enumerate}[label={\em (\alph*)}]
		
		\item {\em (Radial limit for the associated inner function)}
		If $ \varphi^*(e^{i\theta}) $ is well-defined and not equal to $ \infty $, then $ g^*(e^{i\theta}) $ and $ \varphi^*(g^*(e^{i\theta}) )$ are well-defined and \[f(\varphi^*(e^{i\theta}))=\varphi^*(g^*(e^{i\theta}) ).\]
		
		\item {\em(Action of $ f $ on cluster sets)}
		If $ e^{i\theta}\in\partial \mathbb{D} $ is not a singularity for $ g $, then \[f(Cl_\mathbb{C}(\varphi, e^{i\theta}))\subset Cl_\mathbb{C}(\varphi, g(e^{i\theta})).\]
		
		\item {\em(Action of $ f $ on radial cluster sets)}
		Assume $ e^{i\theta}\in \Theta_\mathbb{C} $ and $ g^*(e^{i\theta}) $ exists. Then, $ g^*(e^{i\theta}) $ belongs to $  \Theta_\mathbb{C} $, and \[f(Cl_\rho(\varphi, e^{i\theta})\cap\mathbb{C})\subset Cl_\rho(\varphi, g^*(e^{i\theta}))\cap\mathbb{C}.\]

	\item {\em (Backwards invariance of $ \Theta_\infty $)}
	If $ e^{i\theta}\in\Theta_\infty $, then for all $ e^{i\alpha}\in\partial\mathbb{D} $ with $ g^*(e ^{i\alpha})=  e^{i\theta}$, it holds $ e^{i\alpha}\in\Theta_\infty $.
		
	\end{enumerate}
\end{lemma}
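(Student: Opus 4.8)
\emph{The plan} is to reduce all four statements to the semiconjugacy $f\circ\varphi=\varphi\circ g$ on $\mathbb{D}$, used together with two standing facts. First, $g$ is meromorphic on $\widehat{\mathbb{C}}\setminus\textrm{Sing}(g)$ with $g(\mathbb{D})\subset\mathbb{D}$ and $g(\partial\mathbb{D}\setminus\textrm{Sing}(g))\subset\partial\mathbb{D}$. Second, $f(\partial U)\subset\partial U\subset\mathbb{C}$: indeed $\partial U\subset\mathcal{J}(f)$, the Julia set is completely invariant, and $f(\overline{U})\subset\overline{U}$, so no point of $\partial U$ is ever mapped into $U$. I will also repeatedly invoke the elementary remark that if $z_n\to e^{i\theta}\in\partial\mathbb{D}$ in $\mathbb{D}$ and $\varphi(z_n)\to w\in\mathbb{C}$, then $w\in\partial U$ (otherwise $\varphi^{-1}$ would be continuous at $w$ and $z_n=\varphi^{-1}(\varphi(z_n))$ would converge to $\varphi^{-1}(w)\in\mathbb{D}$).

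For (a), if $v:=\varphi^*(e^{i\theta})\in\mathbb{C}$ then $v\in\partial U$ and the radius $r\mapsto\varphi(re^{i\theta})$ lands at $v$; applying $f$ and using its continuity at the finite point $v$, the curve $r\mapsto\varphi(g(re^{i\theta}))=f(\varphi(re^{i\theta}))$ lands at $f(v)\in\partial U$. I then apply the Lindelöf Theorem~\ref{thm-lindelof} to this curve: its $\varphi$-preimage is exactly $r\mapsto g(re^{i\theta})$ (note $g(re^{i\theta})\in\mathbb{D}$), so it lands at some $e^{i\alpha}\in\partial\mathbb{D}$ with $\varphi^*(e^{i\alpha})=f(v)$; but this landing statement means precisely that $g^*(e^{i\theta})$ exists and equals $e^{i\alpha}$, whence $\varphi^*(g^*(e^{i\theta}))=\varphi^*(e^{i\alpha})=f(v)=f(\varphi^*(e^{i\theta}))$. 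For (b), given $w\in Cl_\mathbb{C}(\varphi,e^{i\theta})$ I take $z_n\to e^{i\theta}$ in $\mathbb{D}$ with $\varphi(z_n)\to w$; since $e^{i\theta}\notin\textrm{Sing}(g)$, $g$ is continuous at $e^{i\theta}$ with $g(e^{i\theta})\in\partial\mathbb{D}$, so $g(z_n)\to g(e^{i\theta})$ with $g(z_n)\in\mathbb{D}$, and therefore $\varphi(g(z_n))=f(\varphi(z_n))\to f(w)\in\mathbb{C}$ exhibits $f(w)\in Cl_\mathbb{C}(\varphi,g(e^{i\theta}))$.

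For (c) I would run the same argument along the radius. Since $e^{i\theta}\in\Theta_\mathbb{C}$, the non-empty compact set $Cl_\rho(\varphi,e^{i\theta})$ is not $\{\infty\}$, so it contains some $w\in\mathbb{C}$; choose an increasing sequence $t_n\to1$ with $\varphi(t_ne^{i\theta})\to w$, necessarily with $w\in\partial U$. Because $g^*(e^{i\theta})$ exists, the curve $\sigma(r):=g(re^{i\theta})$ lands at $\zeta:=g^*(e^{i\theta})$, and $\varphi(\sigma(t_n))=f(\varphi(t_ne^{i\theta}))\to f(w)$. If $\zeta\in\mathbb{D}$ then $\varphi$ is continuous at $\zeta$ and $f(w)=\varphi(\zeta)\in U$, contradicting $f(w)\in f(\partial U)\subset\mathcal{J}(f)$; hence $\zeta\in\partial\mathbb{D}$. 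It then remains to upgrade the conclusion ``$f(w)$ is a cluster value of $\varphi$ along the curve $\sigma$, which lands at $\zeta$'' to ``$f(w)\in Cl_\rho(\varphi,\zeta)$'', equivalently that $f(w)$ is a principal point of the prime end $P(\varphi,\zeta)$ (Theorem~\ref{thm-prime-ends}); once this is done, it simultaneously yields $\zeta\in\Theta_\mathbb{C}$ and the stated inclusion. Finally, (d) is a formal consequence of (c): if $g^*(e^{i\alpha})=e^{i\theta}\in\Theta_\infty$ but $e^{i\alpha}\in\Theta_\mathbb{C}$, then applying (c) at $e^{i\alpha}$ gives $e^{i\theta}=g^*(e^{i\alpha})\in\Theta_\mathbb{C}=\partial\mathbb{D}\setminus\Theta_\infty$, a contradiction; hence $e^{i\alpha}\in\Theta_\infty$.

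The main obstacle is exactly that last step of (c): transferring information about $\varphi$ along the image of the radius under $g$ to the \emph{radial} cluster set at $\zeta$. This is delicate because $e^{i\theta}$ is allowed to be a singularity of $g$, so a priori $\sigma=g(\{re^{i\theta}\})$ may approach $\zeta$ tangentially or irregularly, and in general the cluster set of a conformal map along a curve landing at $\zeta$ is strictly larger than its radial cluster set. I expect this to be resolved by combining the Lindelöf Theorem with a careful null-chain / prime-end argument — plausibly by first settling the case $e^{i\theta}\notin\textrm{Sing}(g)$, where $g$ is conformal at $e^{i\theta}$ so that $\sigma$ meets $\zeta$ non-tangentially and the relevant cluster sets coincide, and then reducing the singular case to it (or using the almost-everywhere existence of radial limits of $g$ and the density of those points).
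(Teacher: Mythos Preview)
Your proofs of (a), (b) and (d) are correct and essentially identical to the paper's. The gap you identify in (c) is real, and the paper resolves it by a route you did not find.

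The difficulty is exactly the one you isolate: from ``$f(w)$ is a cluster value of $\varphi$ along the curve $\sigma=g(\{re^{i\theta}\})$'' you cannot conclude $f(w)\in Cl_\rho(\varphi,\zeta)$, because cluster sets along curves landing at $\zeta$ may strictly exceed the radial cluster set. Your suggested fix (treat first the non-singular case so that $\sigma$ is non-tangential, then reduce) is not what the paper does, and it is not clear it works: even non-tangential approach curves need not recover the radial cluster set of a general univalent map, and there is no evident reduction of the singular case to the non-singular one.

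The paper's idea is to switch from the $\mathbb{D}$-side to the $U$-side and exploit that $f$ is a local homeomorphism near almost every boundary point. Take $z\in Cl_\rho(\varphi,e^{i\theta})\cap\mathbb{C}$ which is \emph{not} a critical point of $f$ (possible since critical points are isolated). By Theorem~\ref{thm-prime-ends}, $z$ is a principal point of $P(\varphi,e^{i\theta})$, so there is a null-chain $\{D_n\}$ for that prime end with all crosscuts contained in a small disk $D(z,r)$ on which $f$ is a homeomorphism. Because $f$ is a homeomorphism on $D(z,r)$ and $f(U)\subset U$, $f(\partial U)\subset\partial U$, each $f(D_n)$ is again a crosscut, the closures remain disjoint, and the diameters shrink. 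The one non-trivial point is that the associated crosscut neighbourhoods are nested and correspond to the single point $\zeta=g^*(e^{i\theta})$: here one uses that $g(r_\theta)$ lands at $\zeta$, so pulling back to $\mathbb{D}$ the images $\varphi^{-1}(f(D_n))$ form a null-chain at $\zeta$. Hence $\{f(D_n)\}$ is a null-chain at $\zeta$ with crosscuts shrinking to $f(z)$, so $f(z)$ is a principal point of $P(\varphi,\zeta)$, i.e.\ $f(z)\in Cl_\rho(\varphi,\zeta)$; in particular $\zeta\in\Theta_\mathbb{C}$. Finally, critical points $z\in Cl_\rho(\varphi,e^{i\theta})\cap\mathbb{C}$ are handled by approximation with non-critical points in the (closed) radial cluster set, using continuity of $f$ and closedness of $Cl_\rho(\varphi,\zeta)$.
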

\begin{proof}
		 \begin{enumerate}[label={(\alph*)}]
		\item		
	Let $  r_\theta(t)\coloneqq te^{i\theta}$, $  t\in \left[ 0,1\right)  $. By assumption, $ \varphi( r_\theta (t))\to \varphi^*(e^{i\theta})\eqqcolon w\in\partial U$,  as $ t\to 1 $, with $w\neq \infty $. Since $ f $ is continuous at $ w $ and conjugate to $ g $ by $ \varphi $, \[\varphi(g( r_\theta (t)))=f(\varphi( r_\theta (t)))\to f(w)\in\partial U, \]as $ t\to 1 $. By Lindelöf Theorem \ref{thm-lindelof}, \[\varphi^{-1}(\varphi(g( r_\theta (t))))=g( r_\theta (t))\]
	lands. Hence, $ g^*(e^{i\theta}) $ exists and belongs to $ \partial \mathbb{D} $, and $ \varphi^*(g^*(e^{i\theta}))=f(w) $, so it is also well-defined.
		
		\item 
			Let $ z\in  Cl_\mathbb{C}(\varphi, e^{i\theta})$. Then, there exists a sequence $ \left\lbrace z_n\right\rbrace _n \subset\mathbb{D}$ such that $ z_n\to e^{i\theta} $ and $\varphi (z_n)\to z $, as $ n\to\infty $. Since $ f $ is continuous at $ z $, and  $ f $ and $ g $ are conjugate by $ \varphi $, we have \[\varphi (g(z_n))=f(\varphi (z_n))\to f(z) ,\] as $ n\to\infty $. Since $ e^{i\theta} $ is not a singularity of $ g $, the sequence $ \left\lbrace g(z_n)\right\rbrace _n \subset\mathbb{D}$ approaches $ g(e^{i\theta}) $, as $ n\to\infty $. Hence, $ f(z)\in Cl_\mathbb{C}(\varphi, g(e^{i\theta}))$, as desired.

			\item Let $ e^{i\theta}\in \Theta_\mathbb{C} $. Assume first $ \varphi^*(e^{i\theta}) $ exists, so $ \varphi^*(e^{i\theta}) =Cl_\rho(\varphi, e^{i\theta})\in\mathbb{C}$. Then, by (a), $ g^*(e^{i\theta}) $ and $ \varphi^*(g^*(e^{i\theta}) )$ are well-defined and \[f(Cl_\rho(\varphi, e^{i\theta}))=f(\varphi^*(e^{i\theta}))=\varphi^*(g^*(e^{i\theta}) )=Cl_\rho(\varphi, g^*(e^{i\theta})) \in\partial U.\] Hence, $ g^*(e^{i\theta})\in\Theta_\mathbb{C} $.
			
			Assume now that $ Cl_\rho (\varphi, e^{i\theta}) $ is a non-degenerate continuum in $ \widehat{\mathbb{C}} $. 
			Since critical points are discrete in $ \mathbb{C} $, we can find $ z\in Cl_\rho(\varphi, e^{i\theta})\cap\mathbb{C} $ which is not a critical point. Hence, there exists $ r>0 $ small enough so that $ f|_{D(z,r)} $ is a homeomorphism onto its image. On the other hand, since $ z\in Cl_\rho(\varphi, e^{i\theta}) $, it is a principal point (see Thm. \ref{thm-prime-ends}), so we can find a null-chain $ \left\lbrace D_n\right\rbrace _n \subset D(z, r)$. 
			
			We claim that $ \left\lbrace f(D_n)\right\rbrace _n \subset f(D(z, r)) $ is a null-chain. We have to check that $ f(D_n) $ is a crosscut for all $ n\geq 0 $, that different crosscuts have disjoint closures, that the corresponding crosscut neighbourhoods are nested, and that its spherical diameter tends to zero as $ n\to\infty $. 
			
			First, it is clear that $ f(D_n) $ is a crosscut for all $ n\geq 0 $, since $ f|_{D(z,r)} $ is a homeomorphism, and $ f(U)\subset U $ and $ f(\partial U)\subset \partial U $. From the fact that $ f|_{D(z,r)} $ is a homeomorphism and the original crosscuts $ \left\lbrace D_n\right\rbrace _n $ have disjoint closures, one deduces that the crosscuts $ \left\lbrace f(D_n)\right\rbrace _n $ also have disjoint closures. It is also clear that the diameter of the crosscuts $ \left\lbrace f(D_n)\right\rbrace _n  $ tends to zero.
			
			We must still see that the crosscut neighbourhoods corresponding to the crosscuts $ \left\lbrace f(D_n)\right\rbrace _n  $ are nested. To do so, consider $ r_\theta $ to be the radial segment  at $ e^{i\theta} $. Since $ g^*(e^{i\theta} ) $ exists, the curve $ g(r_\theta) $ lands at $ g^*(e^{i\theta} ) $. This implies that, for any crosscut $ D $ at $ e^{i\theta}  $, if its image is again a crosscut (which it is, because $ f $ acts locally as a homeomorphism in the dynamical plane), it is a crosscut at $ g^*(e^{i\theta}) $. Therefore, $ \left\lbrace \varphi^{-1}(f(D_n))\right\rbrace _n $ is a null-chain in $ \mathbb{D} $, corresponding to $g^*(e^{i\theta}) \in\partial \mathbb{D} $, and $ \left\lbrace f(D_n)\right\rbrace _n $ is a null-chain in $ U $.
			
			\begin{figure}[htb!]\centering
			\includegraphics[width=15cm]{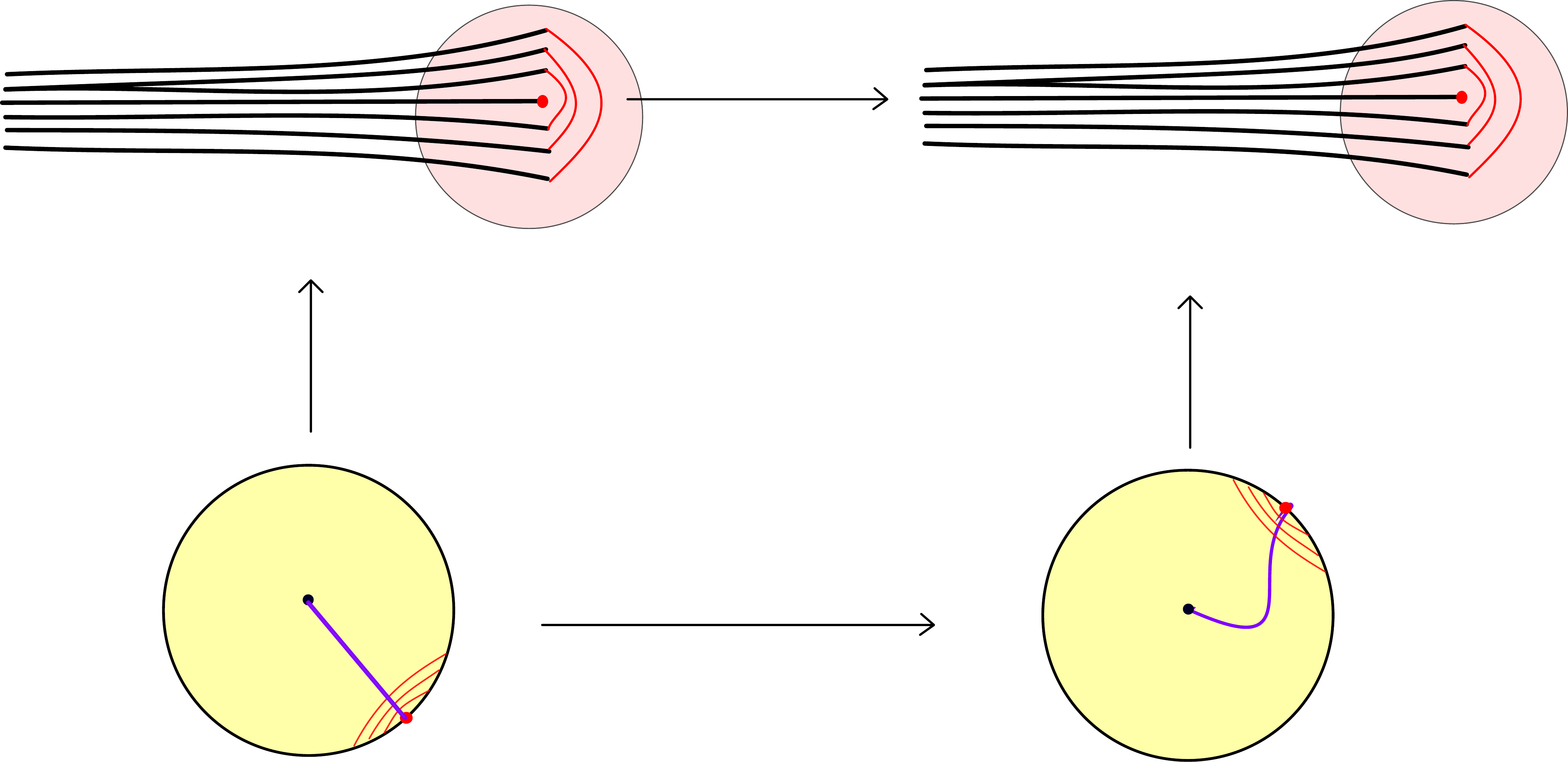}
			\setlength{\unitlength}{15cm}
			\put(-0.4, 0.45){$\partial U$}
			\put(-1, 0.45){$\partial U$}
			\put(-0.69, 0.32){$D(z,r)$}
				\put(-0.12, 0.32){$f(D(z,r))$}
			\put(-0.5, 0.095){$g$}
			\put(-0.5, 0.43){$f$}
			\put(-0.825, 0.25){$\varphi$}
			\put(-0.23, 0.25){$\varphi$}
			\put(-0.9, 0.17){$\mathbb{D}$}
			\put(-0.33, 0.17){$\mathbb{D}$}
			\put(-0.73, 0.01){$e^{i\theta}$}
			\put(-0.17, 0.17){$g^*(e^{i\theta})$}
			\caption{\footnotesize Set-up of the proof of Lemma \ref{lemma-radial-limit}(c). On the one hand, since $ f $ acts homeomorphically on a neighbourhood of $ z $, the image of a crosscut near $ z $ is a crosscut near $ f(z) $. On the other hand, the fact that $ g^*(e^{i\theta}) $ exists allows us to prove that the corresponding crosscut neighbourhoods are nested.
			}
		\end{figure}

			We claim that $  g^*(e^{i\theta})\in \Theta_\mathbb{C}$. Indeed, $ f(z) $ is a principal point in the prime end of $ g^*(e^{i\theta}) $. Then, by Theorem \ref{thm-prime-ends}, $ f(z)\in Cl_\rho(\varphi, g^*(e^{i\theta})) $, and hence $ g^*(e^{i\theta})\in\Theta_\mathbb{C} $.

			Finally, it is left to see that \[f(Cl_\rho(\varphi, e^{i\theta})\cap\mathbb{C})\subset Cl_\rho(\varphi, g^*(e^{i\theta}))\cap\mathbb{C}.\]
		From the previous construction, we have that,
			for all $ z\in Cl_\rho(\varphi, e^{i\theta})\cap\mathbb{C} $ which is not a critical point,  \[f(z)\in Cl_\rho(\varphi, g^*(e^{i\theta})).\]Since $ Cl_\rho(\varphi, e^{i\theta}) $ is closed and critical points are discrete, if $ z\in Cl_\rho(\varphi, e^{i\theta})\cap\mathbb{C} $ is a critical point, we can approximate it by a sequence $ \left\lbrace z_n\right\rbrace _n $ of non-critical points in $ Cl_\rho(\varphi, e^{i\theta})\cap\mathbb{C}$. Since $ f $ is continuous, $ f(z_n)\to f(z) $, and $ f(z_n)\in Cl_\rho (\varphi, g^*(e^{i\theta})) $. Then, $ f(z)\in Cl_\rho (\varphi, g^*(e^{i\theta})) $, because $  Cl_\rho (\varphi, g^*(e^{i\theta}))$ is closed. Thus, \[f(Cl_\rho(\varphi, e^{i\theta})\cap\mathbb{C})\subset Cl_\rho(\varphi, g^*(e^{i\theta}))\cap\mathbb{C},\]as desired.
			
			\item 
			Let $ e^{i\alpha}\in\partial\mathbb{D} $ such that $ g^*(e ^{i\alpha})=  e^{i\theta}$, and assume, on the contrary, that $ e^{i\alpha}\in\Theta_\mathbb{C} $. Then, there exists $ z\in Cl_\rho (\varphi, e^{i\alpha})$, $ z\neq\infty $. By (c), $ f(z)\in Cl_\rho (\varphi, g^*(e^{i\alpha}))=Cl_\rho (\varphi, e^{i\theta})$, $ f(z)\neq\infty $. Hence, $ e^{i\theta}\in\Theta_\mathbb{C} $, a contradiction.

	\end{enumerate}
\end{proof}

\begin{remark}\label{rmk-3.12} The statements in Lemma \ref{lemma-radial-limit} deserve a few comments.\begin{itemize}
		\item In (a), one has to assume that $ \varphi^*(e^{i\theta})\neq \infty $, otherwise $ f(\varphi^*(e^{i\theta}) )$ is not defined. Moreover, the existence of $ g^* (e^{i\theta})$ does not imply that $ \varphi^* (e^{i\theta})$ exists, as shown by Baker domains of $ f(z)=z+e^{-z} $ (compare \cite[Sect. 6]{Fagella-Jové}).
		
		\item In (b), the assumption of $ e^{i\theta} $ not being a singularity for $ g $ is crucial. Indeed, if $ e^{i\theta} $ is a singularity for $ g $, $Cl (g, e^{i\theta})=\overline{\mathbb{D}} $ \cite[Thm. II.6.6]{garnett}.
		
		We also note that, under the same assumptions, we cannot expect $ f(Cl_\mathbb{C}(\varphi, e^{i\theta}))= Cl_\mathbb{C}(\varphi, g(e^{i\theta})) $, due to the possible existence of omitted values in $ \partial U $. For the same reason, one cannot expect, in general, equality in (c).
		
		\item Concerning (d), note that $ \Theta_\infty $ is not always forward invariant. Compare with the  example of the exponential basin considered in \cite{DevaneyGoldberg}, where $ -1\in\Theta_\infty $ but $ g^*(-1)=0\notin \Theta_\infty $. Even though, $ -1 $ is a singularity for $ g $, $ \Theta_\infty $ is not always forward invariant even at points which are not singularities. Indeed, the inner function $ g $ associated to the parabolic basin of $ f(z)=ze^{-z} $ is a Blaschke product of degree 2, which can be chosen to have the Denjoy-Wolff point  at 1 and $ g(-1)=1 $. Then, $ g $ satisfies $ -1\in \Theta_\infty $ and   $ 1=g(-1)\in\Theta_{\mathbb{C}} $ (compare \cite{BakerDominguez99, Fagella-Jové}).
	\end{itemize}
	\end{remark}

\subsection{Boundary dynamics of Fatou components}\label{sect-boundary-dyn-FC}

The standard classification of invariant Fatou components may not be the most appropriate when dealing with boundary dynamics. Indeed, it turns out that many boundary properties depend only on the ergodicity or the recurrence of the radial extension $ g^*|_{\partial\mathbb{D}} $. 

\begin{defi}{\bf (Ergodicity and recurrence)}\label{def-erg-rec}  Let $ (\partial\mathbb{D}, \mathcal{B}, \lambda) $ be the measure space on $ \partial \mathbb{D} $ defined by $ \mathcal{B} $, the Borel $ \sigma $-algebra of $ \partial \mathbb{D} $, and $ \lambda $, its normalized Lebesgue measure. Let $ g\colon\mathbb{D}\to\mathbb{D} $ be an inner function, and let $ g^*\colon{\partial\mathbb{D}} \to {\partial\mathbb{D}} $ be its radial extension, defined $ \lambda $-almost everywhere. We say that
		\begin{enumerate}[label={ (\alph*)}]
		\item $ g^* $ is {\em ergodic}, if for every $ A\in\mathcal{B} $ such that $ (g^*)^{-1}(A)=A $, it holds $ \lambda(A)=0 $ or $ \lambda(A)=1 $;
		\item $ g^* $ is {\em recurrent}, if for every $ A\in\mathcal{B} $ and $ \lambda $-almost every $ x\in A $, there exists a sequence $ n_k\to\infty $ such that $ (g^*)^{n_k}(x)\in A $.
	\end{enumerate}
\end{defi}

This leads to the definition of ergodic (resp. recurrent) Fatou components as seen in the introduction, according to $ g^*\colon\partial\mathbb{D}\to \partial\mathbb{D} $ being ergodic (resp. recurrent).

Next theorem, which can be easily deduced combining \cite[Thm. G]{DoeringMané1991} and \cite[Lemma 2.6]{Bargmann}, relates the usual classification of invariant Fatou components with ergodicity and recurrence. Recall that a \textit{doubly parabolic Baker domain} is a Fatou component $ U $ in which iterates converge uniformly to $ \infty $, and \[\rho_U(f^n(z), f^{n+1}(z))\to 0,\] as $ n\to\infty $, for all $ z\in U $. Equivalent definitions can be found in \cite{Bargmann}, see also a summary in \cite[Sect. 2]{BerweilerZheng}.

\begin{thm}{\bf (Characterization of ergodic Fatou components)}\label{thm-ergodic-FC}
	Let $ f $ be a transcendental entire function, and let $ U $ be an  invariant Fatou component. Then, $ U $ is ergodic if and only if it is an attracting basin, a parabolic basin, a Siegel disk, or a doubly parabolic Baker domain. 
	
\noindent	Moreover, attracting basins, parabolic basins and Siegel disks are always recurrent, while hyperbolic and simply parabolic Baker domains never are. Doubly parabolic Baker domains for which the Denjoy-Wolff point of the associated inner function is not a singularity for $ g $ are recurrent.
\end{thm}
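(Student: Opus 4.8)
The plan is to push everything through the Riemann-map conjugacy to the associated inner function $g$ and to read the ergodic behaviour of $g^*$ off the position and nature of the Denjoy--Wolff point $p$ of $g$ (Theorem \ref{teo-dw}), using the two quoted ingredients as black boxes. First I would record the dictionary between the usual classification of $U$ and the behaviour of $g$ at $p$. If $U$ is an attracting basin, then $f$ has an attracting fixed point $z_0\in U$ (Denjoy--Wolff applied to $g$ rules out higher period), so $g$ has an attracting fixed point and $p=\varphi^{-1}(z_0)\in\mathbb{D}$. If $U$ is a Siegel disk, then $g$ is a conformal automorphism of $\mathbb{D}$ conjugate to an irrational rotation, so $p\in\mathbb{D}$ and, normalizing $\varphi$, $g^*$ is an irrational rotation of $\partial\mathbb{D}$. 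For parabolic basins and Baker domains one has instead $p\in\partial\mathbb{D}$: for a parabolic basin $g$ has a parabolic boundary fixed point ($g'(p)=1$), it is non-M\"obius since $U$ contains a singular value and hence $f|_U$ is not univalent, and the orbit $g^n(0)$, corresponding via $\varphi$ to $f^n(\varphi(0))$ approaching the parabolic point $z_0\in\partial U$, converges to $p$ at the parabolic rate; while for Baker domains K\"onig's trichotomy \cite{konig} together with \cite[Lemma 2.6]{Bargmann} identifies the three types with $g'(p)\in(0,1)$ (hyperbolic), $g$ a parabolic M\"obius automorphism, so $g'(p)=1$ and $g$ univalent (simply parabolic), and $g'(p)=1$ with $g$ non-M\"obius, the K\"onig coordinate being non-injective (doubly parabolic).

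With the dictionary in hand, the ``if and only if'' in the theorem follows from the classification of boundary dynamics of inner functions in \cite[Thm. G]{DoeringMané1991}. When $p\in\mathbb{D}$ one may assume $g(0)=0$: then $g^*$ preserves normalized Lebesgue measure $\lambda$ (for any harmonic $h$, $\int h\circ g^*\,d\lambda=h(g(0))=\int h\,d\lambda$), and if $g$ is not a rotation it is exact; in either case $g^*$ is ergodic, so attracting basins and Siegel disks are ergodic. When $p\in\partial\mathbb{D}$, \cite[Thm. G]{DoeringMané1991} gives that $g^*$ is ergodic exactly for the parabolic non-M\"obius inner functions, i.e. for parabolic basins and doubly parabolic Baker domains, and is not ergodic when $g'(p)<1$ (hyperbolic Baker domains) or when $g$ is a parabolic M\"obius automorphism (simply parabolic Baker domains); in these last two M\"obius cases one even exhibits $g^*$-invariant sets of intermediate measure in the standard models $x\mapsto\lambda x$ and $x\mapsto x+1$ on $\mathbb{R}$ equipped with the transported Lebesgue measure. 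This proves the equivalence.

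For recurrence I would combine Poincar\'e recurrence with the conservativity criterion of \cite[Thm. G]{DoeringMané1991}: for $p\in\partial\mathbb{D}$, $g^*$ is recurrent precisely when $\sum_n\bigl(1-|g^n(0)|\bigr)=\infty$. If $p\in\mathbb{D}$ then $g^*$ preserves the probability measure $\lambda$ (or its rotate), hence is recurrent, which covers attracting basins and Siegel disks. If $U$ is a parabolic basin, the parabolic approach rate forces $1-|g^n(0)|\gtrsim 1/n$, so the series diverges and $g^*$ is recurrent; the same holds for a doubly parabolic Baker domain \emph{as soon as} $p\notin\mathrm{Sing}(g)$, because then $g$ is analytic and non-M\"obius at $p$ with $g'(p)=1$, which again forces $1-|g^n(0)|\gtrsim 1/n$ --- this is the last sentence of the theorem. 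Finally, for a hyperbolic Baker domain $1-|g^n(0)|$ decays geometrically, and for a simply parabolic Baker domain the parabolic M\"obius model gives $1-|g^n(0)|\asymp 1/n^2$; in both cases $\sum_n(1-|g^n(0)|)<\infty$, so $g^*$ is totally dissipative, hence neither recurrent nor (as already noted) ergodic.

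\textbf{Main obstacle.} The routine part is everything with $p\in\mathbb{D}$ or $g'(p)<1$: measure-invariance of inner functions fixing the origin, Poincar\'e recurrence, and the geometric decay estimate. The genuine difficulty is concentrated in the parabolic boundary case $g'(p)=1$, where one must separate the non-M\"obius parabolic inner functions (parabolic basins and doubly parabolic Baker domains: ergodic, and conservative when $g$ is analytic at $p$) from the M\"obius ones (simply parabolic Baker domains: neither), and convert the hypothesis ``$p\notin\mathrm{Sing}(g)$'' into the orbit-rate estimate $1-|g^n(0)|\gtrsim 1/n$. These two points are exactly the content of \cite[Thm. G]{DoeringMané1991} (the ergodic/conservative dichotomy for $g^*$ in terms of $p$ and the series $\sum_n(1-|g^n(0)|)$) and of \cite[Lemma 2.6]{Bargmann} (the translation of K\"onig's Baker-domain trichotomy into the local structure of $g$ at $p$), so the proof consists in assembling these along the lines above.
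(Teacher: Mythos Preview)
The paper does not supply a proof of this theorem; it simply states that it ``can be easily deduced combining \cite[Thm.~G]{DoeringMané1991} and \cite[Lemma~2.6]{Bargmann}''. Your proposal is an attempt to spell out that deduction, and the overall architecture---translate via $\varphi$ to the inner function $g$, read the K\"onig/Cowen type off the Denjoy--Wolff data, then invoke Doering--Ma\~n\'e for the ergodic properties of $g^*$---is the intended one.

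There is, however, a genuine gap in your dictionary. You identify \emph{simply parabolic} Baker domains with the case ``$g$ a parabolic M\"obius automorphism, so $g'(p)=1$ and $g$ univalent'', and \emph{doubly parabolic} with ``$g'(p)=1$ with $g$ non-M\"obius''. That is not the K\"onig/Cowen classification: simply parabolic Baker domains can have $f|_U$ (hence $g$) of any degree, including infinite. The trichotomy is about the \emph{normal form} of the linearizer---translation on $\mathbb{H}$ versus translation on $\mathbb{C}$, exactly as the paper records in the introduction---not about whether $g$ itself is M\"obius. Consequently your argument for non-ergodicity of the simply parabolic case (``exhibit $g^*$-invariant sets of intermediate measure in the standard model $x\mapsto x+1$ on $\mathbb{R}$'') breaks down, since $g$ need not \emph{be} that model; and your recurrence estimate $1-|g^n(0)|\asymp 1/n^2$ likewise applies only to the M\"obius parabolic map, not to a general simply parabolic inner function. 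The same objection touches the hyperbolic case: $g$ need not be the M\"obius map $z\mapsto\lambda z$, so the geometric decay must be extracted from the angular derivative $g'(p)<1$, not from an explicit model.

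The fix is to use \cite[Lemma~2.6]{Bargmann} for what it actually provides---the identification of the Baker-domain type with the Cowen type of $g$ as a self-map of $\mathbb{D}$---and then quote \cite[Thm.~G]{DoeringMané1991} in the form ``$g^*$ is ergodic iff $g$ is of elliptic or doubly parabolic type'', together with the series criterion for conservativity in each Cowen type. That is precisely what the paper's one-line citation is pointing at.
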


	In \cite{bfjk-escaping}, conditions are given which imply recurrence, which are weaker than that  the Denjoy-Wolff point of the associated inner function is not a singularity. 

The following theorem follows from the work of Doering and Mañé \cite{DoeringMané1991}, a groundbreaking approach to the study of the boundary dynamics of a Fatou component in terms of the harmonic measure, relying on a deep study of the ergodic properties of the radial extension $ g^*|_{\partial\mathbb{D}} $ of the associated inner functions. Their work was continued in \cite{bfjk-escaping}, obtaining the following result.

\begin{thm}{\bf (Recurrence of the boundary map, {\normalfont \cite{DoeringMané1991, bfjk-escaping}})}\label{thm-ergodic-properties}
	Let $ f $ be a transcendental entire function, and let $ U $ be an invariant Fatou component for $ f $. If $ U $ is recurrent, then $ \omega_U $-almost every point has a dense orbit in $ \partial U $. In particular, $ \mathcal{I}(f)\cap\partial U $ (the set of escaping points in  $\partial U$) has zero harmonic measure.
\end{thm}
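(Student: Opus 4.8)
The plan is to transfer the ergodic theory of the boundary map $g^{*}\colon\partial\mathbb D\to\partial\mathbb D$ to $\partial U$ through the radial limits of the Riemann map, in the spirit of Doering and Mañé. Fix the base point $z_{0}=\varphi(0)$ (normalising $\varphi$ so that $g$ fixes $0$ in the attracting case). By conformal invariance of harmonic measure, together with the fact that harmonic measure of $\mathbb D$ from $0$ is $\lambda$, one has $\omega_{U}=(\varphi^{*})_{*}\lambda$, where $\varphi^{*}$ is defined $\lambda$-a.e.\ by Theorem \ref{thm-FatouRiez} and is finite off the null set $\Theta_{\infty}$; moreover, wherever the radial limit exists it lies in $\widehat{\partial}U$, hence in $\partial U$ once $\infty$ is excluded. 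Since $g^{*}$ preserves the class of $\lambda$-null sets (a standard property of boundary maps of inner functions), the set $E:=\bigcap_{n\ge 0}(g^{*})^{-n}\{\zeta\colon\varphi^{*}(\zeta)\text{ exists and }\neq\infty\}$ has $\lambda(E)=1$ and is forward invariant, and on it Lemma \ref{lemma-radial-limit}(a) iterates to $f^{n}\circ\varphi^{*}=\varphi^{*}\circ(g^{*})^{n}$ for all $n\ge 0$. In particular, for $\zeta\in E$ the whole $f$-orbit of $\varphi^{*}(\zeta)$ lies in $\partial U$ and is the $\varphi^{*}$-image of the $g^{*}$-orbit of $\zeta$.

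Second, I would handle the dynamics on the circle. Recurrence of $U$ means that $g^{*}$ is conservative, and by Theorem \ref{thm-ergodic-FC} the component $U$ is then an attracting or parabolic basin, a Siegel disk, or a doubly parabolic Baker domain, so $g^{*}$ is also ergodic. A conservative, ergodic, nonsingular transformation of $(\partial\mathbb D,\lambda)$ admits a $\sigma$-finite invariant measure equivalent to $\lambda$ ($\lambda$ itself in the basin and Siegel cases, an infinite invariant measure in the parabolic cases), and for any such transformation the standard Hopf/ergodic argument gives: for every $A$ with $\lambda(A)>0$ one has $(g^{*})^{n}(\zeta)\in A$ for infinitely many $n$ and $\lambda$-a.e.\ $\zeta$ — indeed $\bigcup_{n\ge 0}(g^{*})^{-n}A$ is essentially invariant of positive measure, hence of full measure by ergodicity, and conservativity promotes ``some $n$'' to ``infinitely many $n$''. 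Running this over a countable basis of $\partial\mathbb D$ shows that $\lambda$-a.e.\ $\zeta\in E$ has a dense $g^{*}$-orbit.

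Third, I would transport density to $\partial U$. Here the key point is that $\omega_{U}$ has full support on $\partial U$ — a general feature of harmonic measure, which in the present setting also follows from $\partial U\subseteq\mathcal J(f)$ together with the fact that backward orbits of interior points accumulate on all of $\partial U$ — so that $\lambda\big((\varphi^{*})^{-1}(V)\big)=\omega_{U}(V)>0$ for every non-empty relatively open $V\subseteq\partial U$. Fix a countable basis $\{V_{k}\}_{k}$ of $\partial U$. Applying the second step with $A=A_{k}:=(\varphi^{*})^{-1}(V_{k})$ and intersecting over $k$, there is a $\lambda$-full subset $G\subseteq E$ such that for every $\zeta\in G$ and every $k$ we have $f^{n}(\varphi^{*}(\zeta))=\varphi^{*}\big((g^{*})^{n}\zeta\big)\in V_{k}$ for some $n$; that is, the $f$-orbit of $\varphi^{*}(\zeta)$ is dense in $\partial U$. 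Since $\omega_{U}=(\varphi^{*})_{*}\lambda$ and $(\varphi^{*})^{-1}(\varphi^{*}(G))\supseteq G$, the set $\varphi^{*}(G)$ has full $\omega_{U}$-measure, proving the first assertion. For the last one, $\partial U\cap\mathbb C\neq\emptyset$, so an orbit dense in $\partial U$ returns infinitely often to a bounded region and therefore cannot converge to $\infty$; hence the $\omega_{U}$-full set $\varphi^{*}(G)$ is disjoint from $\mathcal I(f)$, and $\omega_{U}(\mathcal I(f)\cap\partial U)=0$.

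The step I expect to be the real obstacle is the transfer of density through $\varphi^{*}$, which is wildly discontinuous and in general not surjective onto $\partial U$: the way around is to replace any continuity or topological-transitivity statement by full support of $\omega_{U}$ on $\partial U$ combined with conservative ergodicity of $g^{*}$ on the circle. The technically deepest ingredient is the ergodic theory of boundary maps of inner functions in the infinite invariant-measure (parabolic) regime; this is exactly where recurrence is used, and where the cited works of Doering and Mañé and \cite{bfjk-escaping} carry the main weight.
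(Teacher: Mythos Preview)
The paper does not give its own proof of this theorem: it is stated as a background result and attributed to \cite{DoeringMané1991, bfjk-escaping}. Your sketch is a correct reconstruction of the argument behind those references --- transfer to $\partial\mathbb D$ via $\varphi^{*}$, use recurrence together with ergodicity of $g^{*}$ (the implication ``recurrent $\Rightarrow$ ergodic'' is indeed read off from Theorem~\ref{thm-ergodic-FC}, since hyperbolic and simply parabolic Baker domains are never recurrent), and push density back through the full support of harmonic measure.

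Two minor remarks. First, the sentence about a $\sigma$-finite invariant measure equivalent to $\lambda$ is not needed for what you actually prove: your Hopf-type argument uses only nonsingularity, conservativity and ergodicity of $g^{*}$, and the existence of such an invariant measure (while true for inner functions) is a separate and deeper fact that you never invoke. Second, the claim that $\bigcup_{n\ge 0}(g^{*})^{-n}A$ is ``essentially invariant'' deserves one extra line: a priori it is only backward invariant, but conservativity forces $\lambda\big(\bigcup_{n}(g^{*})^{-n}A\setminus (g^{*})^{-1}\bigcup_{n}(g^{*})^{-n}A\big)=0$ since the difference would be wandering, after which ergodicity applies as you say.
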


\section{Ergodic Fatou components and boundary structure: \ref{teo:A} and \ref{corol:B}}\label{sect-ergodic}

Ergodic Fatou components have similar topological boundary structure, as shown by the results of
Baker and Weinreich, Baker and Domínguez, and Bargmann, which describe cluster sets and the accesses to infinity for these Fatou components. 
\begin{thm}{\bf (Cluster sets and radial limits for ergodic Fatou components)}\label{thm-all-cluster-sets-contain-infty}
		Let $ f $ be a transcendental entire function, and let $ U $ be  an unbounded invariant Fatou component, which we assume  to be ergodic. Consider $ \varphi\colon\mathbb{D}\to U $ to be a Riemann map. The following holds.
		
		\begin{enumerate}[label={\em (\alph*)}]
			\item {\em (All cluster sets contain infinity, {\normalfont \cite{baker-weinreich}})}	Then,  $ \infty\in Cl(\varphi, e^{i\theta})$, for all $ e^{i\theta}\in \partial\mathbb{D} $. 
			
			\item {\em (Accesses to infinity are dense, {\normalfont \cite{ BakerDominguez99, Bargmann}})} Moreover, if $ \infty $ is accessible from $ U $, then
			$ \Theta_\infty $
			is dense in $ \partial \mathbb{D} $.
		\end{enumerate}

\end{thm}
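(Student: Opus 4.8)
The plan is to prove the two items separately, recasting in the language of Section \ref{sect-inner-function} the arguments of \cite{baker-weinreich} for (a) and of \cite{BakerDominguez99, Bargmann} for (b). The common starting point is that an unbounded invariant Fatou component has $f|_U$ of infinite degree, so the associated inner function $g$ has infinite degree and a non-empty singularity set $\mathrm{Sing}(g)$; this is exactly what distinguishes the ergodic case from the (univalent Baker domain) case, where $\partial U$ is locally connected and (a) is false.

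For (a) I would argue by contradiction, ruling out bounded ``ends''. Suppose $\infty\notin Cl(\varphi,e^{i\theta_0})$ for some $e^{i\theta_0}$. By Theorem \ref{thm-prime-ends} the impression $\bigcap_n\overline{U_{D_n}}$ of the prime end at $e^{i\theta_0}$ is a compact subset of $\mathbb{C}$, and since these closures decrease to it in $\widehat{\mathbb{C}}$, for $n$ large $\overline{U_{D_n}}$ is bounded; fix such a crosscut $\sigma$ with $V:=U_\sigma$ satisfying $\overline V\subset\mathbb{C}$ compact, and $V\subsetneq U$ because $U$ is unbounded. Lifting by $\varphi$, the open arc $I\subset\partial\mathbb{D}$ subtended by $\varphi^{-1}(\sigma)$ on the side of $e^{i\theta_0}$ consists of points with $Cl(\varphi,e^{i\theta})\subseteq\overline V$, hence with bounded radial cluster sets. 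Now the dynamics enter: since $V$ is bounded and $\infty$ is an essential singularity of the entire map $f$, the set $f^{-1}(V)$ has components accumulating at $\infty$; those lying in $U$ are crosscut neighbourhoods whose closures contain $\infty$ and which $f$ maps onto $V$, so after applying $\varphi^{-1}$ they subtend arcs that $g^*$ carries into $I$ and that accumulate at a point $e^{i\psi}\in\mathrm{Sing}(g)$. Iterating and invoking the ergodicity of $g^*$, one would conclude that the set of $e^{i\theta}$ with bounded radial cluster set is, up to a null set, $g^*$-invariant and of full measure, contradicting the fact (Remark \ref{rmk-3.12}) that $Cl(g,e^{i\psi})=\overline{\mathbb{D}}$ at a singularity, which forces arbitrarily small arcs near $e^{i\psi}$ to carry $\varphi$-cluster sets reaching $\infty$. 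The delicate point is precisely this last bookkeeping, which is where \cite{baker-weinreich} does the real work.

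For (b), assume $\infty$ is accessible, so by the Correspondence Theorem \ref{correspondence-theorem} there is $e^{i\psi}$ with $\varphi^*(e^{i\psi})=\infty$, that is, $\Theta_\infty\neq\emptyset$. Take a curve $\gamma\colon[0,1)\to U$ landing at $\infty$ with $\varphi^{-1}(\gamma)$ landing at $e^{i\psi}$, and with $\gamma(0)\in f(U)$ (possible since $f(U)$ is dense in $U$). Since $U$ is a connected component of $f^{-1}(U)$, every branch of $f^{-1}$ continued along $\gamma$ from a point of $U$ stays in $U$; as $f(z)\to\infty$ only when $z\to\infty$, each such lift again lands at $\infty$, so its $\varphi^{-1}$-image lands at a point of $\Theta_\infty$ which is a $g^*$-preimage of $e^{i\psi}$ (this backward invariance of $\Theta_\infty$ is Lemma \ref{lemma-radial-limit}(d)). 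Because $\infty$ is an essential singularity, these lifts proliferate and accumulate at $\infty$; iterating, the backward orbit of $e^{i\psi}$ under the analytic continuation of $g$ is an infinite subset of $\Theta_\infty$ accumulating at the singularities of every iterate $g^n$. By the standard fact that the iterates of an infinite-degree inner function are non-normal at every point of $\partial\mathbb{D}$, the backward orbit of any non-exceptional point is dense in $\partial\mathbb{D}$; as $\Theta_\infty$ is backward invariant and non-empty it contains such a point (a finite totally invariant boundary set is incompatible with infinite degree), and hence $\Theta_\infty$ is dense.

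I expect the main obstacle to be the ``no bounded ends'' step in (a): one must control crosscuts simultaneously under $f$ (where preimages proliferate near the essential singularity) and under the Riemann map $\varphi$ (which need not extend continuously anywhere), and then feed this into the ergodicity of $g^*$. A fully detailed argument would essentially reproduce \cite{baker-weinreich} and \cite{BakerDominguez99, Bargmann}, so the reasonable plan is to quote those results and indicate only the adaptations to the present setting.
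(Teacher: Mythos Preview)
The paper does not prove this theorem at all: it is stated with citations to \cite{baker-weinreich} for (a) and to \cite{BakerDominguez99, Bargmann} for (b), and then used as a black box in the proof of \ref{teo:A}. So there is no ``paper's own proof'' to compare your sketch against; the honest thing to do here is simply to cite the references, which is also what you conclude in your last paragraph.

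That said, your sketch contains a genuine error that propagates through both parts. Your ``common starting point'' --- that an unbounded invariant Fatou component has $f|_U$ of infinite degree and hence $\mathrm{Sing}(g)\neq\emptyset$ --- is false. Unbounded attracting or parabolic basins (which are ergodic) can perfectly well have $f|_U$ of finite degree; the paper itself only says $f|_U$ \emph{need not} be proper when $U$ is unbounded, not that it never is. In the finite-degree case $g$ is a finite Blaschke product with no singularities, so every step of yours that invokes $\mathrm{Sing}(g)$ or ``iterates of an infinite-degree inner function'' is vacuous there. What actually drives both results is the ergodicity hypothesis (equivalently, for $g$ not a rotation, that $\mathcal J(g)=\partial\mathbb D$), which guarantees density of backward orbits in $\partial\mathbb D$ regardless of whether the degree is finite or infinite. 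For (b) your argument is essentially salvageable once you replace ``infinite degree'' by ``ergodic'' and justify density of $g$-preimages via $\mathcal J(g)=\partial\mathbb D$ (finite Blaschke case) or the irrational rotation (Siegel case), together with the backward invariance of $\Theta_\infty$ from Lemma \ref{lemma-radial-limit}(d). For (a) your outline is too loose to stand on its own: the passage from ``arcs with bounded cluster sets form a $g^*$-invariant set of full measure'' to a contradiction is exactly the substantive content of \cite{baker-weinreich}, and your appeal to $Cl(g,e^{i\psi})=\overline{\mathbb D}$ at a singularity does not by itself produce that contradiction (and is unavailable when $g$ has finite degree).
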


\begin{remark}{\bf (Non-ergodic Fatou components)}
	We note that ergodicity is a sufficient condition, but not necessary. Indeed, there are examples of non-ergodic Fatou components that satisfy $ \Theta_\infty $
	is dense in $ \partial \mathbb{D} $  \cite[Example 3.6]{Bargmann}. Likewise, it is well-known that the previous theorems do not hold for an arbitrary invariant Fatou components for which infinity is accessible, as shown for example by univalent Baker domains whose boundaries are Jordan curves \cite{BaranskiFagella}.
\end{remark}

Now we prove  \ref{teo:A}, which we recall below.
\begin{named}{Theorem A} {\bf (Topological structure of  $\partial  U $)} 
	Let $ f $ be a transcendental entire function, and let $ U $ be  an invariant Fatou component, such that $ \infty $ is accessible from $ U $. Assume  $ U $ is ergodic. Consider $ \varphi\colon\mathbb{D}\to U $ to be a Riemann map. Then, $ \partial U $ is the disjoint union of cluster sets in $ \mathbb{C} $ of $ \varphi $, i.e.
\[\partial U= \bigsqcup\limits_{\tiny e^{i\theta}\in\partial \mathbb{D}} Cl_\mathbb{C} (\varphi, e^{i\theta}).\] 

		\noindent Moreover,  either $ Cl_\mathbb{C}(\varphi, e^{i\theta})  $ is  empty, or has at most two connected components.  In particular, if $ Cl_\mathbb{C}(\varphi, e^{i\theta})  $ is disconnected, then $ \varphi^*(e^{i\theta})=\infty $.
\end{named}

\begin{proof}
	We shall prove first that all cluster sets are disjoint in $ \mathbb{C} $ and its union is $ \partial U $, i.e. if $ p\in\partial U\cap\mathbb{C} $, there exists a unique $ e^{i\theta}\in\partial\mathbb{D} $ such that $ p\in Cl_\mathbb{C}(\varphi, e^{i\theta})$.
	
		To prove the existence of such $ e^{i\theta} $ it is enough to consider a sequence $ \left\lbrace z_n\right\rbrace _n \subset U$ such that $ z_n\to p $, and $ \left\lbrace w_n\coloneqq\varphi^{-1}(z_n)\right\rbrace _n \subset \mathbb{D}$. Then,   $ \left\lbrace w_n\right\rbrace _n $ must have at least one accumulation point, which must be in $ \partial\mathbb{D} $. For any such accumulation point 
	$ e^{i\theta}$, we have $ p\in Cl(\varphi, e^{i\theta})$.
	
	To prove uniqueness, assume, on the contrary, that there exist  $ e^{i\theta_1}, e^{i\theta_2}\in\partial\mathbb{D} $ such that $ p\in Cl(\varphi, e^{i\theta_1})\cap Cl(\varphi, e^{i\theta_2})$, and $ e^{i\theta_1}\neq e^{i\theta_2} $. Since $ \Theta_\infty $ is dense in $ \partial \mathbb{D}$ (Thm. \ref{thm-all-cluster-sets-contain-infty}), we can choose $ e^{i\alpha_1}, e^{i\alpha_2}\in\Theta_\infty $ such that $ \alpha_1<\theta_1<\alpha_2<\theta_2 $. The radial segments\[r_{\alpha_i}=\left\lbrace re^{i\alpha_i}\colon r\in \left[ 0,1\right) \right\rbrace, \]$ i=1,2 $, give a partition of $ \mathbb{D} $. Since $ \varphi^*(\alpha_i) =\infty$, $ \varphi(r_{\alpha_1})\cup \varphi(r_{\alpha_2})  $ give a partition of  $ \mathbb{C} $ (see Fig. \ref{fig-cluster-sets-disjoint}).
	
	\begin{figure}[htb!]\centering
		\includegraphics[width=12cm]{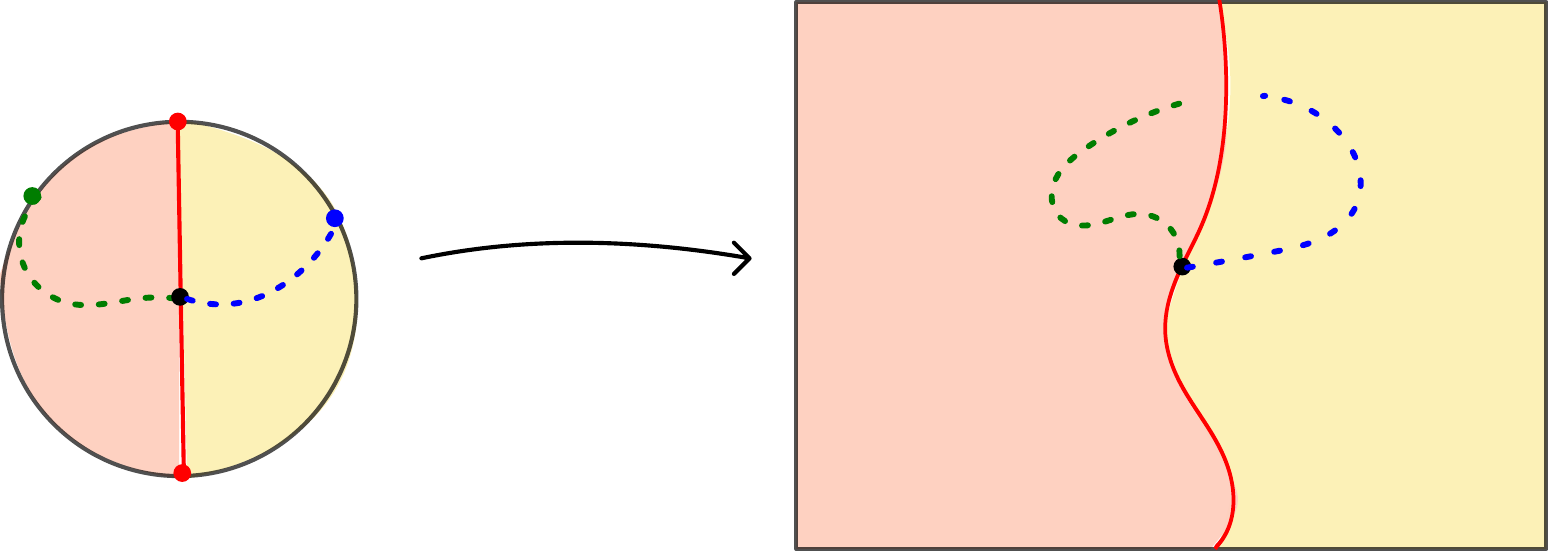}
		\setlength{\unitlength}{12cm}
		\put(-0.9, 0.28){$e^{i\alpha_1}$}
		\put(-0.9, 0.14){$0$}
		\put(-0.9, 0.015){$e^{i\alpha_2}$}
		\put(-1.03, 0.21){$e^{i\theta_1}$}
		\put(-0.77, 0.21){$e^{i\theta_2}$}
		\put(-0.13, 0.27){$ \varphi(w^2_n) $}
		\put(-0.37, 0.27){$ \varphi(w^1_n) $}
		\put(-0.2, 0.325){$ \varphi(r_{\alpha_1}) $}
		\put(-0.2, 0.05){$ \varphi(r_{\alpha_2}) $}
		\put(-0.24, 0.16){$ \varphi(0) $}
		\put(0.005, 0.05){ $U$}
		\put(-1, 0.05){ $\mathbb{D}$}
		\put(-0.65, 0.21){ $\varphi$}
		\caption{\footnotesize Diagram of the setup of the proof of  \ref{teo:A}. }\label{fig-cluster-sets-disjoint}
	\end{figure}
	
	Therefore, given any two sequences $ \left\lbrace w^1_n\right\rbrace _n, \left\lbrace w^2_n\right\rbrace _n \subset \mathbb{D} $, with $ w^1_n \to e^{i\theta_1}$ and $ w^2_n \to e^{i\theta_2}$, the corresponding sequences in $ U $ lie in different connected components of $ \mathbb{C}\smallsetminus(\varphi(r_{\alpha_1})\cup \varphi(r_{\alpha_2})) $, for $ n $ large enough. Hence,  they cannot accumulate at the same (finite) point, leading to a contradiction.
	
	To prove the second statement notice that  $ Cl_\mathbb{C}(\varphi, e^{i\theta}) $ is disjoint from any other cluster set. Therefore, any connected component of $ Cl_\mathbb{C}(\varphi, e^{i\theta}) $ is, in fact, a connected component of $ \partial U $. Hence, by Proposition \ref{lemma-disconnected-cluster-sets}, each cluster set has either one or two connected components, that must be unbounded, since the cluster set is connected in $ \widehat{\mathbb{C}} $.
\end{proof}

\begin{remark}{\bf (Connected components per cluster set)}
We note that, for the the exponential attracting basin in \cite{DevaneyGoldberg}, all cluster sets have exactly one connected component in $ \mathbb{C} $. However, the previous theorem is sharp, as shown by the function $ f(z)=z+e^{-z} $. For the invariant Baker domains of this function, the cluster set of every point in $ \Theta_\infty $ has two connected components \cite{Fagella-Jové}.
\end{remark}

\subsection{Siegel disks: Proof of \ref{corol:B}}
For entire functions, it is known that Siegel disks have no  accessible boundary periodic points \cite[Corol. 3.15]{Bargmann}. An easy consequence of \ref{teo:A} is that, if $ \infty $ is accessible from the Siegel disk, in fact there are no periodic points at all.
\begin{named}{Corollary B}{\bf (Periodic points in Siegel disks)} 
	Let $ f $ be a transcendental entire function, and let $ U $ be  a Siegel disk, such that $ \infty $ is accessible from $ U $. Then, there are no periodic points in $ \partial U $.
\end{named}
\begin{proof}
	Assume there exists $ p\in\partial U $ periodic, i.e. $ f^n(p)=p $, for some $ n\geq 1 $. Then, $ p\in Cl(\varphi, e^{i\theta}) $, for a unique $ e^{i\theta}\in\partial\mathbb{D} $, since cluster sets are disjoint (\ref{teo:A}). For a Siegel disk, the associated inner function $ g $ is an irrational rotation, so it extends continuously to $ \partial\mathbb{D} $. Hence,  by Lemma \ref{lemma-radial-limit}, \[f^n(Cl_\mathbb{C}(\varphi, e^{i\theta}))\subset Cl_\mathbb{C}(\varphi, g^n(e^{i\theta})).\] Now $ f^n(p)=p\in Cl_\mathbb{C}(\varphi, e^{i\theta})\cap Cl_\mathbb{C}(\varphi, g^n(e^{i\theta}))$. But this intersection is empty unless  $ g^n(e^{i\theta})= e^{i\theta}$, and this is a contradiction because $ g $ is an irrational rotation.
\end{proof}

\section{Technical Lemmas}\label{proof-of-TL}

In this section we prove some technical results which are the basis for the proofs of Theorems C and D. Basically, we aim to relate the hyptothesis of being postsingularly separated (PS), or strongly postsingularly separated (SPS), with the possibility of defining inverse branches around points in $ \partial U $. To do so, we construct in both cases appropriate neighbourhoods of each component of $ \partial U $, in which we can define all inverse branches globally. 

 First, recall that if a Fatou component is PS, then  there exists a domain $ V $, such that $ \overline{V}\subset U $ and \[P(f)\cap U\subset V.\]
Note that, in this case, $ P(f)\cap\partial U $ may be non-empty, so inverse branches may not be defined around points in $ \partial U $. However, we can still define the inverse branches in a one-sided neighbourhood of each connected component $ C $ of $ \partial U $, or, equivalently, in sufficiently small crosscut neighbourhoods. This is the content of \ref{technical-lemma}.

\begin{named}{Technical Lemma 1}\label{technical-lemma}  Let $ f $ be a transcendental entire function, and let $ U $ be  an invariant Fatou component, such that $ \infty $ is accessible from $ U $.
	Assume $ U $ is PS, and let $ \varphi\colon\mathbb{D}\to U $ be a Riemann map.

\noindent	Then,  for any component $ C $ of $ \partial U $,  there exists a domain $ \Omega_C$  such that  $ C\subset \Omega_C $,   $ \Omega_C $ is simply connected,
		$ \Omega_C\cap U $ is connected,
		and $ \Omega_C $ is disjoint from $ P(f) \cap U$.
		
	\noindent	In addition, for all $ e^{i\theta}$ such that $ Cl_\rho(\varphi, e^{i\theta})\cap \mathbb{C}\subset C $, the set $ \varphi^{-1}(\Omega_C\cap U) $ contains a crosscut neighbourhood of $ e^{i\theta}$.
		
	\end{named}
	
	If, additionally, $ U $ is SPS, i.e. if
	there exists a simply connected domain $ \Omega $  such that $ \overline{U}\subset \Omega $,  and \[{P(f)}\cap\Omega\subset V, \] then inverse branches can be defined  around each component of $ \partial U $ globally, i.e. for each component $ C $ of $ \partial U $ there exists a simply connected domain $ \Omega_C $ such that all inverse branches are well-defined in $ \Omega_C $. Moreover,  all inverse branches are locally contracting with respect to the hyperbolic metric in a certain neighbourhood of $ \partial U $ (see \ref{technical-lemma2})  and satisfy the following property, which is crucial in the proof of \ref{teo:D}.
	 	\begin{defi}{\bf (Proper invertibility)}\label{defi-proper-inv}
		Let $ f $ be a holomorphic function, and let $ U $ be an invariant Fatou component. Let $ z\in\partial U $. We say $ f $ is \textit{properly invertible} (at $ z $ with respect to $ U $) if,  there exists $ r>0 $ such that for every $ w\in\partial U $ such that $ f^n(w)=z $ there exists a branch $ F_n$ of $ f^{-n} $ which is well-defined in $ D(z,r) $, and satisfies
		\[ F_n(D(z,r)\cap U)\subset U.\]
	\end{defi}

	The definition of the inverse branches and their properties are collected in \ref{technical-lemma2}.
	In the sequel, let $ W \coloneqq \mathbb{C}\smallsetminus P(f) $, and denote by $ \rho_W $ the hyperbolic metric in $ W $. We use standard properties of the hyperbolic metric, which can be found e.g. in \cite[Sect. I.4]{Carleson-Gamelin}, \cite{BeardonMinda}.
	\begin{named}{Technical Lemma 2}\label{technical-lemma2} 
Let $ U $ be a Fatou component satisfying the assumptions of \ref{technical-lemma}. If, additionally, $ U $ is SPS, then the domain $ \Omega_C $ can be chosen to satisfy the following properties.
	\begin{enumerate}
		\item $ \Omega_C\subset W $, so $ \Omega_C \cap P(f)=\emptyset $.
		
	\item 	For all $ z\in\partial U $, there exists a neighbourhood $ D_z \subset W$ of $ z $ such that all branches $ F_n $ of $ f^{-n} $ are well-defined in $ D_z $,  $ F_n(D_z)\subset W $ and  \[\rho_W(F_n(x), F_n(y))\leq \rho_W(x,y),\hspace{0.5cm}\textrm{for all }x,y\in D_z. \]

		\item For all $ z\in\partial U $, $ f $ is properly invertible at $ z $ with respect to $ U $.
	\end{enumerate}
\end{named}

Sections \ref{TechL1} and  \ref{TechL2}  are devoted to prove the Technical Lemmas. Finally, Section \ref{subsect-TL4}, which is not needed for the proofs of Theorems C and D, is dedicated to further comments on the relationship between proper invertibility and SPS, and the connection with the concept of local surjectivity.

\subsection{ Proof of \ref{technical-lemma}}\label{TechL1}

We assume $ U $ to be PS. Then, by definition, there exists a domain $ V $ such that $ \overline{V} \subset U$ and $ P(f)\cap U \subset V$. Since $ \infty $ is accessible from $ U $, we can assume, without loss of generality, that $ \infty $ is accessible from $ V $. Indeed, if $ \infty $ is not accessible from $ V $, take a curve $ \gamma\colon\left[ 0,1\right) \to U $, such that $ \gamma(0)\in V $ and $ \gamma $ lands at $ \infty $. Then, redefine $ V $ to contain $ \gamma $.

\begin{figure}[htb!]\centering
	\includegraphics[width=15cm]{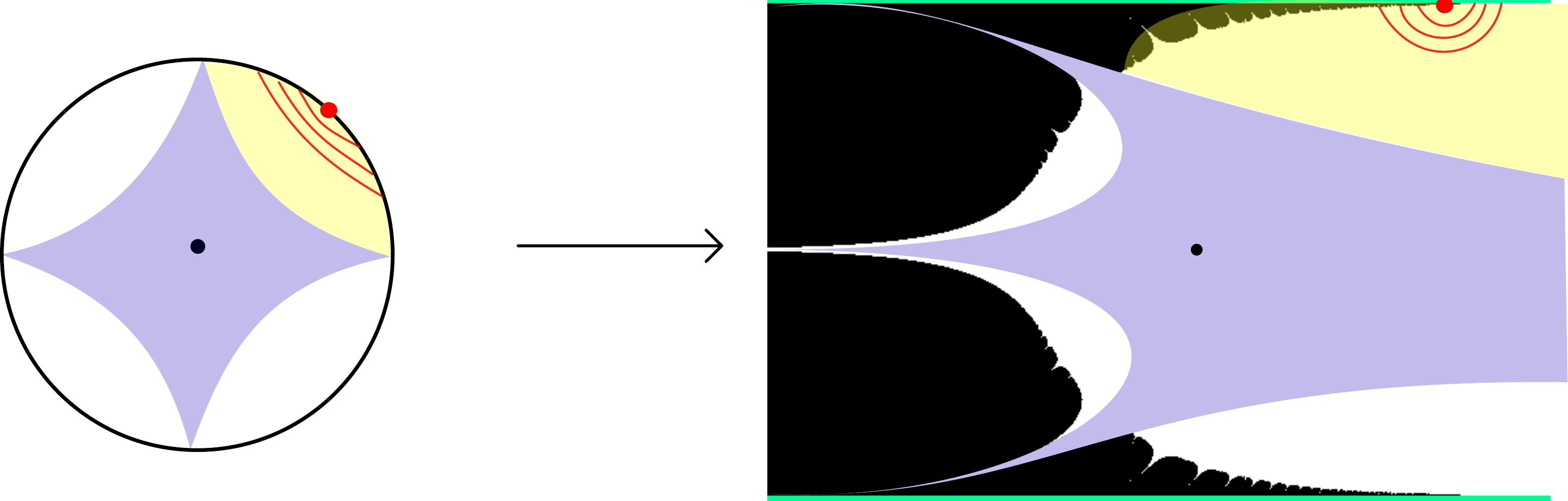}
	\setlength{\unitlength}{15cm}
	\put(0.01, 0.29){$U$}
	\put(-0.08, 0.325){$z$}
	\put(-0.2, 0.15){$V$}
	\put(-0.61, 0.175){$\varphi$}
	\put(-1, 0.25){$\mathbb{D}$}
	\put(-0.78, 0.25){$e^{i\theta}$}
		\put(-0.87, 0.2){\small$\varphi^{-1}(\gamma_1)$}
			\put(-0.2, 0.23){$\gamma_1$}
				\put(-0.3, 0.18){$\gamma_2$}
					\put(-0.3, 0.13){$\gamma_3$}
						\put(-0.2, 0.07){$\gamma_4$}
							\put(-0.15, 0.255){$\Omega_C\cap U$}
	\caption{\footnotesize Set-up of the proof of {\ref{technical-lemma}} for PS Fatou components.
	}
\end{figure}

Now, consider $\mathbb{C}\smallsetminus\overline{V}$.  Then, the boundary component $ C $ is contained in a component of $ \mathbb{C}\smallsetminus\overline{V} $, say $ \Omega_C $. By Theorem \ref{teo-topologia}, $ \Omega_C $ is simply connected, because $ \overline{V}\cup\left\lbrace \infty\right\rbrace  $ is connected, and, since $\overline{V} \subset U$, $ \Omega_C\cap U $ is connected. Since $ P(f)\cap U\subset V $, it holds that $ \Omega_C $ is disjoint from $ P(f)\cap U $.

Next, let $ e^{i\theta}\in\partial \mathbb{D} $ be such that $ Cl_\rho(\varphi, e^{i\theta}) \cap \mathbb{C}\subset C$. We have to see that $ \varphi^{-1}(\Omega_C\cap U) $ contains a crosscut neighbourhood of $ e^{i\theta} $. Without loss of generality, we assume that $ V $ is bounded by a collection of disjoint curves $ \left\lbrace \gamma_i\right\rbrace _{i\in I} $, each of them landing at $ \infty $ from both ends. By the Correspondence Theorem \ref{correspondence-theorem}, each $ \varphi^{-1}(\gamma_i )$ is a crosscut in $ \mathbb{D} $ (possibly degenerate, i.e. such that both its endpoints in $ \partial\mathbb{D} $ are the same).

To end the proof, we have to see that the crosscut corresponding to \[\partial(\varphi^{-1}(\Omega_C\cap U)) \cap\mathbb{D}=\partial( \varphi^{-1}(\gamma_i)), \]for some $ i\in I $,  is non-degenerate. Assume, on the contrary, that it is degenerate with common endpoint $ e^{i\theta}\in\partial\mathbb{D} $. By the Correspondence Theorem \ref{correspondence-theorem}, the two endpoints of $ \gamma_i $ define the same access to infinity (the one corresponding to $ e^{i\theta} $), meaning that $ \widehat{\gamma_i} $ is a Jordan curve in $ \widehat{\mathbb{C} }$ and $ \partial U $ is contained in the connected component of $ \widehat{\mathbb{C} }\smallsetminus \widehat{\gamma_i} $ which is not $  \Omega_C $.
Therefore, for any sequence $ \left\lbrace z_n\right\rbrace _n\subset  \Omega_C\cap U$ with $ z_n\to\widehat{\partial} U $, we have $z_n\to\infty$. This is a contradiction with the fact that $ \Omega_C $ is a neighbourhood of $ C\subset\partial U $. This ends the proof of the \ref{technical-lemma}.

\subsection{ Proof of  {\ref{technical-lemma2}}}\label{TechL2}

For SPS Fatou components, define $ \Omega_C $ to be the connected component of \[\Omega\smallsetminus\overline{V}\] in which $ C $ is contained (adjusting $ V $ so that $ \infty\in\overline{V} $). It is straightforward to see that $ C\subset \Omega_C $,   $ \Omega_C $ is simply connected,
$ \Omega_C\cap U $ is connected,
and $ \Omega_C $ is disjoint from $ P(f) \cap U$.

Now we prove that $ f $ is locally expanding in $ W=\mathbb{C}\smallsetminus P(f) $ with respect to the hyperbolic metric $ \rho_W $. Note that $ W $ is an open neighbourhood of  $\partial U $ and $ W $ is {\em backwards invariant} under $ f $, i.e. $ f^{-1}(W)\subset W $.
\begin{prop}{\bf (Set of expansion)}\label{prop-region-of-expansion}
	Under the assumptions of \ref{technical-lemma2}, the following holds.\begin{enumerate}
		\item $ f\colon f^{-1}(W)\to W$ is {\em locally expanding} with respect to the hyperbolic metric $ \rho_W $, i.e. 
		\[\rho_W(z)\leq\rho_W(f(z))\cdot\left|f'(z) \right|,\hspace{0.5cm} \textrm{for all } z\in f^{-1}(W). \] 
		\item 	For all $ z\in\partial U $, there exists a neighbourhood $ D_z \subset W$ of $ z $ such that all branches $ F_n $ of $ f^{-n} $ are well-defined in $ D_z$,  $ F_n(D_z)\subset W $ and  \[\rho_W(F_n(x), F_n(y))\leq \rho_W(x,y),\hspace{0.5cm}\textrm{for all }x,y\in D_z. \]
		
		\item Moreover, if $ z $ and $ F_n(z) $ belong to the same connected component of $ W $, then there exists $ \lambda \in (0,1)$ such that \[\rho_W(F_n(x), F_n(y))\leq\lambda \rho_W(x,y),\hspace{0.5cm}\textrm{for all }x,y\in D_z. \]
	\end{enumerate}
\end{prop}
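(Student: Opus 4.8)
The plan is to reduce everything to the Schwarz--Pick contraction principle, using the crucial fact that, once the postsingular set is removed, $f$ becomes an unramified covering. Write $W=\mathbb{C}\smallsetminus P(f)$; since $SV(f)\subset P(f)$, the map $f\colon f^{-1}(W)\to W$ has no singular values over $W$ and is therefore an unbranched covering map, and since $P(f)$ is forward invariant one has $f^{-n}(W)\subset W$ for all $n\ge 0$ (as already noted, $W$ is an open neighbourhood of $\partial U$, and $W$ is hyperbolic by the standing set-up). Covering maps are local isometries for the corresponding hyperbolic metrics, while the inclusion of any subdomain into a hyperbolic domain is a weak contraction; reading these two facts at the level of metric densities on $f^{-1}(W)$ gives
\[
\rho_W(z)\;\le\;\rho_{f^{-1}(W)}(z)\;=\;\rho_W(f(z))\,\lvert f'(z)\rvert ,\qquad z\in f^{-1}(W),
\]
which is exactly statement (1).

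For (2), I would take $D_z$ to be a small, simply connected, hyperbolically convex neighbourhood of $z$ contained in $W$; this is legitimate because under the SPS hypothesis $P(f)\cap\Omega\subset V$ with $\overline V\subset U$ and $\overline U\subset\Omega$, so $\partial U\subset W$ and in particular $z\in W$. Since $f\colon f^{-1}(W)\to W$ is a covering and $D_z$ is simply connected, every branch of $f^{-1}$ is defined on all of $D_z$ with image in $f^{-1}(W)\subset W$; iterating this over the (again simply connected) images, every branch $F_n$ of $f^{-n}$ is defined on $D_z$ with $F_n(D_z)\subset W$. For the inequality, fix $x,y\in D_z$, let $\gamma\subset D_z$ be the $\rho_W$-geodesic joining them (it stays in $D_z$ by convexity), and let $Y'$ be the component of $f^{-n}(W)$ containing $F_n(D_z)$, $W'$ the component of $W$ containing $D_z$. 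Then $f^n\colon Y'\to W'$ is a covering, hence a local isometry, so $\ell_{\rho_{Y'}}(F_n\circ\gamma)=\ell_{\rho_{W'}}(\gamma)=\rho_W(x,y)$; combining this with $\rho_W\le\rho_{Y'}$ on $Y'$ yields $\rho_W(F_n(x),F_n(y))\le\rho_W(x,y)$.

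For (3), the extra hypothesis that $z$ and $F_n(z)$ lie in the same component $W'$ of $W$ forces the preimage component $Y'$ above to satisfy $Y'\subset W'$, so the whole comparison now takes place inside one hyperbolic surface. The decisive step is to show that $Y'\subsetneq W'$: if instead $Y'=W'$, then $f^n\colon W'\to W'$ would be a covering of $W'$ onto itself, and lifting to the universal cover $\mathbb{D}\to W'$ realizes $f^n$ by an automorphism of $\mathbb{D}$, whose iterates form a normal family; descending, $\{f^{nk}|_{W'}\}_k$ is normal, so $W'\subset\mathcal{F}(f)$, contradicting $z\in\partial U\subset\mathcal{J}(f)$. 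Granting $Y'\subsetneq W'$, the strict Schwarz--Pick inequality gives $\rho_{W'}<\rho_{Y'}$ pointwise on $Y'$; since $F_n$ extends holomorphically past $\overline{D_z}$, the set $F_n(\overline{D_z})$ is compact in $Y'$, so $\lambda:=\sup_{F_n(\overline{D_z})}(\rho_{W'}/\rho_{Y'})<1$. Writing $\rho_W(F_n(w))\lvert F_n'(w)\rvert=(\rho_{W'}/\rho_{Y'})(F_n(w))\cdot\rho_W(w)$ --- the factor $\rho_{Y'}(F_n(w))\lvert F_n'(w)\rvert$ equals $\rho_W(w)$ because $f^n$ is a local isometry --- and integrating this infinitesimal estimate along the geodesic $\gamma\subset D_z$ as in (2) gives $\rho_W(F_n(x),F_n(y))\le\lambda\,\rho_W(x,y)$.

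I expect the main difficulty to lie in (3): one must rule out the degenerate possibility $Y'=W'$ (this is precisely where the location of $z$ in the Julia set and the normal-family argument are needed), and one must cope with the fact that the strict Schwarz--Pick inequality is only pointwise. The latter is handled, as above, by restricting to the compact set $F_n(\overline{D_z})$ and by always integrating the infinitesimal bound along a geodesic trapped inside the convex neighbourhood $D_z$, rather than attempting to compare hyperbolic distances on non-nested domains directly.
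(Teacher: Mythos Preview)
Your proof is correct and follows essentially the same route as the paper: Schwarz--Pick applied to the covering $f\colon f^{-1}(W)\to W$ together with the monotonicity of the hyperbolic density under inclusion for (1); a small hyperbolically convex $D_z$ and integration along a geodesic for (2); and strict inclusion of the preimage component plus a compactness argument for the uniform $\lambda<1$ in (3). The only substantive difference is how you rule out $Y'=W'$ in (3): the paper observes that $f^n(W')=W'$ would contradict the blow-up property of the Julia set (since then $\bigcup_m f^{nm}(W')=W'$ omits $P(f)$), whereas you argue that a self-covering of $W'$ lifts to an automorphism of $\mathbb{D}$ and hence $\{f^{nk}|_{W'}\}$ is normal. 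Both arguments are short and standard; yours is perhaps slightly more self-contained, while the paper's makes the role of $P(f)$ more explicit.
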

\begin{proof} Let us check that $ W $ satisfies the required properties. Let $ \rho_W $ denote the hyperbolic metric in $ W $. Note that \textit{a priori} $ W $ cannot be assumed to be connected, so we define $ \rho_W $ component by component. Indeed, each connected component $ \widetilde{W} $ of $ W $ is a hyperbolic domain, and hence admits a hyperbolic metric $ \rho_{\widetilde{W}} $. Given $ z\in W $, we define \[\rho_W(z)\coloneqq \rho_{\widetilde{W}}(z),\] where $ \widetilde{W }$ stands for the connected component of $ W $ with $ z\in \widetilde{W} $. Given $ z,w\in W $, the hyperbolic distance is defined as \[\rho_W(z, w)\coloneqq \rho_{\widetilde{W}}(z, w),\] if $ z $ and $ w $ lie in the same connected component $ \widetilde{W} $ of $ W $; and $ \rho_W (z,w)=\infty $, otherwise.
	
	\begin{enumerate}
		
		\item 	 
		Since $ W $ does not contain singular values, given a connected component $ W_1 $ of $ f^{-1}(W) $, 
		$ f\colon W_1\to f(W_1) $ is a holomorphic covering. Note that $ f(W_1) $ is a connected component of $ W $. Indeed, $ f(W_1) $ is connected, and hence contained in a connected component $ W_2 $ of $ W $. Since $ W $ does not contain singular values, it holds $ f(W_1)=W_2 $.
		
		 By Schwarz-Pick lemma \cite[Thm. I.4.1]{Carleson-Gamelin}, $ f $ is a local isometry, i.e. if $ z\in W_1 $, \[\rho_{f^{-1}(W)}(z)=\rho_{W_1}(z)=\rho_{W_2}(f(z))\left|f'(z) \right|=\rho_W(f(z))\left|f'(z) \right|. \]
		Since $ f^{-1}(W) \subset W$, it holds 	\[\rho_W(z)\leq\rho_{f^{-1}(W)}(z)=\rho_W(f(z))\left|f'(z) \right|, \textrm{ for all } z\in f^{-1}(W). \]
		
		\item Given $ z\in\partial U $, we take $ D_z $ to be a hyperbolic disk in $ W $ of radius small enough so that $ D_z $ is simply connected. Since $ W $ is backwards invariant and $ P(f) \cap W=\emptyset$, it follows that all branches $ F_n $ of $ f^{-n} $ are well-defined in $ D_z$ and  $ F_n(D_z)\subset W $.
		
		Now, 	let $ x,y\in D_z $. Since $ D_z$ is a hyperbolic disk, it is hyperbolically convex, so there exists a geodesic $ \gamma \subset D_z$ between $ x $ and $ y $, and $ F_n(\gamma) $ is a curve joining $ F_n(x) $ and $ F_n(y) $. Hence, by statement 1, \[\rho_W(F_n(x),F_n(y))\leq \int_{F_n(\gamma)}\rho_W(s)ds=\int_{\gamma}\rho_W(F_n(t))\left| F_n'(t)\right| dt=\]
		\[=\int_{\gamma}\rho_W(F_n(t))\frac{1}{\left| (f^n)'(F_n(t))\right| }dt\leq\int_{\gamma}\rho_W(t)dt=\rho_W(x,y),\]since $ \gamma $ is taken to be a geodesic between $ x $ and $ y $.

		\item	Let ${W_1} $ be the connected component of $ W $ in which $ z $ and $ F_n(z) $ lie. Hence,  $ f^{-n}({W}_1)\cap {W}_1 \neq\emptyset$. We claim that any connected component of 
		$ f^{-n}({W}_1)$ intersecting $ {W}_1$ is strictly contained in $ {W}_1 $. Indeed, assume there exists $ n\geq 1 $ such that  $ f^n({W}_1)={W}_1 $. Then, for the map $ f^n $, $ W_1 $ is a neighbourhood of $ z\in \mathcal{J}(f^n) $ for which\[\bigcup_{m\geq 0}f^{n\cdot m}(W_1)=W_1\subset W=\mathbb{C}\smallsetminus P(f).\] Since $ P(f) $ has more than one point,  this would contradict the blow-up property of $ \mathcal{J}(f^n) $. 
		
		Therefore, 
		$ \rho_{{W}_1}< \rho_{f^{-n}({W}_1)} $, and hence \[\rho_W(w)<\rho_W (f^n(w))\left|(f^n)'(w) \right|, \textrm{ for all } w\in f^{-n}(W_1)\cap W_1. \]
		Without loss of generality, let us assume that the neighbourhood $ D_z $ of $ z $ is compactly contained in $ W_1 $. Thus, the continuous function\[0<\frac{\rho_W(w)}{\rho_W (f^n(w))\left|(f^n)'(w) \right|}<1 \] reaches a maximum in $ D_z $. Therefore, there exists $ \lambda\in (0,1) $ such that\[\rho_W(F_n(x), F_n(y))\leq\lambda \rho_W(x,y),\hspace{0.5cm}\textrm{for all }x,y\in D_z,\] as desired.
	\end{enumerate}
Thus, the proof of Proposition \ref{prop-region-of-expansion} is complete.
\end{proof}

\begin{remark}{\bf (Strict expansion)}\label{remark-strictly-expansion}
	One may ask if this open set $ W $ can be improved so that the function is strictly expanding on it. This is always the case for hyperbolic and subhyperbolic functions (see e.g \cite{subhyperbolic,bergweiler-fagella-rempe, RempeSixsmith}). The answer is negative for arbitrary SPS Fatou components,  as it can be seen for the doubly parabolic Baker domains of $ f(z)=z+e^{-z} $ (see \cite{Fagella-Jové}). 
\end{remark}

Finally,  to end the proof  we have to see that for all $ z\in\partial U $, there exists $ r>0 $ such that all branches $ F_n$ of $ f^{-n} $ with $ F_n(z)\in\partial U $ are well-defined in $ D(z,r) $, and satisfy
\[ F_n(D(z,r)\cap U)\subset U.\]
 
 We denote by $ C $ the connected component of $ \partial U $ with $ z\in C $, and consider the neighbourhood $ \Omega_C $ defined previously.
 Then, the proper invertibility  follows from the  properties of the set $ \Omega_C $. Indeed, 
  since $ \Omega_C $ is simply connected and disjoint from $ P(f) $,  any inverse branch defined locally at $ z\in \partial U $ extends conformally to $ \Omega_C $. By construction,  $ \Omega_C\cap U $ is connected, and so is $ F_n(\Omega_C\cap U) $. By the total invariance of the Julia and the Fatou set, it follows that $ F_n(\Omega_C\cap U) \subset U$, as desired. This ends the proof of \ref{technical-lemma2}.
\subsection{Proper invertibility, strongly postsingular separation and local surjectivity}\label{subsect-TL4}
 
Finally, in this section we discuss the necessity of the condition of being SPS. To prove \ref{teo:D}, not only we need to have all inverse branches well-defined locally around every point in $ \partial U $, which could be achieved simply assuming  \[P(f)\cap\partial U=\emptyset  ,\] but also to have proper invertibility.

Then, the following question arises: it is sufficient to add the assumption of $P(f)\cap\partial U=\emptyset $ to the PS condition to have proper invertibility? The answer is negative in general, so the hypothesis of being SPS is necessary. We prove this in Proposition \ref{prop-caract-localsurj}. 
 
 We note that, if one could prove that any PS Fatou component with $ P(f)\cap \partial U=\emptyset $ is, automatically, SPS, then results of \ref{teo:D} (i.e. accessibility and density of periodic boundary points, and density of escaping boundary points) would hold only assuming the PS condition and $ P(f)\cap \partial U=\emptyset $.

\begin{prop}{\bf (Characterizations of proper invertibility)}\label{prop-caract-localsurj}
	Let $ f $ be a transcendental entire function, and let $ U $ be  an invariant Fatou component, such that $ \infty $ is accessible from $ U $.
	Assume $ U $ is PS and $ P(f)\cap\partial U =\emptyset $. 
	Then, the following are equivalent.\begin{enumerate}[label={\em (\alph*)}]
		\item $ U $ is SPS.
		\item 	For each connected component $ C $ of $ \partial U $, there exists an open neighbourhood $ \Omega_C $ of $ C $ in which every branch $ F_n $ of $ f^{-n} $ is well-defined, and, if there exists $ z\in \Omega_C\cap U $ such that $ F_n(z)\in U $, then $ F_n (\Omega_C\cap U)\subset U$.
	\end{enumerate}
\end{prop}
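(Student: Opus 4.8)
The statement is an equivalence, and the implication (a)$\,\Rightarrow\,$(b) is essentially already contained in \ref{technical-lemma2}; so the core of the work is (b)$\,\Rightarrow\,$(a). For (a)$\,\Rightarrow\,$(b), assuming $U$ is SPS I would take $\Omega_C$ to be the domain constructed in the proof of \ref{technical-lemma2}, namely the connected component of $\Omega\smallsetminus\overline V$ containing $C$: it is simply connected, satisfies $\Omega_C\cap P(f)=\emptyset$, and $\Omega_C\cap U$ is connected. Since $\Omega_C$ is simply connected and disjoint from $P(f)$ (hence from $SV(f^n)$ for every $n$), the monodromy theorem lets each branch $F_n$ of $f^{-n}$ be continued to a single-valued holomorphic map on $\Omega_C$. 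If some $z\in\Omega_C\cap U$ has $F_n(z)\in U$, then $F_n(\Omega_C\cap U)$ is connected, lies in $\mathcal F(f)$ by total invariance, and meets $U$, so it is contained in $U$ --- this is exactly condition (b), and it is the same argument used for the proper invertibility clause of \ref{technical-lemma2} (cf. Def. \ref{defi-proper-inv}).

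For (b)$\,\Rightarrow\,$(a), fix neighbourhoods $\Omega_C$ as in (b). The first observation is that requiring \emph{every} branch of $f^{-n}$ to be well-defined on $\Omega_C$ forces $\Omega_C\cap\bigcup_{k\ge 0}f^k(SV(f))=\emptyset$ --- a point $f^k(s)$ with $s\in SV(f)$ lying in $\Omega_C$ would obstruct some branch of $f^{-(k+1)}$ --- and hence, $\Omega_C$ being open, $\Omega_C\cap P(f)=\emptyset$. Thus $\bigcup_C\Omega_C$ is a $P(f)$-free open neighbourhood of $\partial U$, so $P(f)$ cannot accumulate on $\partial U$, and $P(f)\cap\overline U=P(f)\cap U\subset V$ while $P(f)\smallsetminus\overline U$ lies in the complementary components of $\overline U$, bounded away from $\partial U$. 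Granting (as the key step, below) that $\widehat{P(f)}\smallsetminus V$ is contained in a single connected component $E$ of $\widehat{\mathbb C}\smallsetminus\overline U$, I would engulf the compact set $\widehat{P(f)}\smallsetminus V$ in a continuum $K\subset E$ and set $\Omega\coloneqq\widehat{\mathbb C}\smallsetminus K$ (intersected with $\mathbb C$ if $\infty\notin K$). By \ref{teo-topologia}, $\Omega$ is simply connected; it contains $\overline U$ since $K\cap\overline U=\emptyset$; and $P(f)\cap\Omega=P(f)\smallsetminus K\subset P(f)\cap\overline U\subset V$ because $P(f)\smallsetminus\overline U\subset\widehat{P(f)}\smallsetminus V\subset K$. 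Hence $U$ is SPS.

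The main obstacle is precisely that key step: showing that $\widehat{P(f)}\smallsetminus V$ does not meet two distinct complementary components of $\overline U$. If it did --- in particular if some postsingular point were trapped in a bounded ``lake'' of $U$ not reached by the collars $\Omega_C$ --- then no simply connected $\Omega\supset\overline U$ could avoid it and (a) would genuinely fail, so this cannot be deduced from point-set topology alone: the mere fact that the $\Omega_C$ avoid $P(f)$ is not enough. I would attack it by combining the proper-invertibility clause of (b) (inverse images of $\Omega_C\cap U$ that land in $U$ stay in $U$, so iterated inverse branches along $\partial U$ behave coherently) with the dynamics of $f$ --- forward invariance $f(U)\subset U$ and $f(P(f))\subset P(f)$, together with the accessibility of $\infty$ from $U$ --- to conclude that the part of $P(f)$ outside $U$ is concentrated in one ``channel'' of the complement and is never cut off from infinity by $\overline U$. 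This is the same circle of ideas as the notion of local surjectivity discussed in this section, and I expect verifying this topological concentration of the postsingular set to be essentially all of the work; once it is in place, the hull construction above closes the proof.
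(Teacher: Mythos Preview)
Your direction (a)$\Rightarrow$(b) is fine and coincides with the paper's: take the component of $\Omega\smallsetminus\overline V$ containing $C$ as in \ref{technical-lemma2}, and the monodromy/connectedness argument you give is exactly what is used there.

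The gap is in (b)$\Rightarrow$(a). You correctly extract $\Omega_C\cap P(f)=\emptyset$ from the well-definedness of all inverse branches, but then your ``key step'' --- that $\widehat{P(f)}\smallsetminus V$ lies in a single component of $\widehat{\mathbb C}\smallsetminus\overline U$ --- is precisely the content of the implication, and you do not prove it. The sketch you offer (forward invariance of $U$ and $P(f)$, accessibility of $\infty$, the proper-invertibility clause) does not lead anywhere: none of these dynamical facts prevents a postsingular value from sitting in a bounded complementary pocket of $\overline U$, which is exactly the obstruction to SPS. In fact the paper's argument is purely topological and never invokes the dynamics of $f$ for this step.

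What you are missing is the reformulation that the paper records just before Proposition~\ref{prop-boundary-components}: under the standing hypotheses, $U$ is SPS if and only if $P(f)\cap\mathrm{fill}(C)=\emptyset$ for every boundary component $C$ (Definition~\ref{defi-filled}). This turns the global problem of building a simply connected $\Omega\supset\overline U$ into a \emph{local}, component-by-component condition. The paper then invokes the equivalence (b)$\Leftrightarrow$(c) of Proposition~\ref{prop-boundary-components}: a simply connected neighbourhood $\Omega_C$ of $C$ on which all inverse branches exist necessarily contains $\mathrm{fill}(C)$ and is disjoint from $P(f)$, whence $P(f)\cap\mathrm{fill}(C)=\emptyset$; conversely that condition lets one build such an $\Omega_C$. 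So the whole proof of Proposition~\ref{prop-caract-localsurj} collapses to this reformulation plus Proposition~\ref{prop-boundary-components}. Your ``single component'' claim is in fact equivalent to ``$P(f)\cap\mathrm{fill}(C)=\emptyset$ for all $C$'', but you are trying to prove it globally by hand instead of localising via $\mathrm{fill}(C)$, and that is why you get stuck.
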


We shall rewrite the previous proposition in terms of  boundary components and filled closures (see Def. \ref{defi-filled}). We observe that $ U $ being  SPS is equivalent to \[P(f)\cap \textrm{fill}(U)\subset U, \hspace{0.3cm} \textrm{ and to }\hspace{0.3cm}P(f)\cap \textrm{fill}(\partial U)=\emptyset.\]Recall that $ \textrm{fill}(A) $ is closed and does not include the unbounded components of $ \mathbb{C}\smallsetminus\overline{ A } $ from which $ \infty $ is accessible. In particular, $ U\cap\textrm{fill} (\partial U)=\emptyset $, if $ \infty $ is accessible from $ U $.
Hence, in this case,  being SPS is equivalent to \[P(f)\cap \textrm{fill}(C)=\emptyset,\] for all connected components $ C $ of $ \partial U $. See Figure \ref{fig-clustersets} below to have a geometric intuition.

\begin{prop}{\bf (Characterization of proper invertibility for boundary components)}\label{prop-boundary-components}
	Let $ f $ be a transcendental entire function, and let $ U $ be  an invariant Fatou component, such that $ \infty $ is accessible from $ U $.
	Assume $ U $ is PS and $ P(f)\cap \partial U=\emptyset $. 
	Let $C $ be a connected component of $ \partial U $.
	Then, the following are equivalent.\begin{enumerate}[label={\em (\alph*)}]
		\item 
		For all $ z\in C $, $ f$ is properly invertible at $ z $ with respect to $ U $.
		
		\item $P(f)\cap \textrm{\em fill}(C)=\emptyset$.
		
		\item
		There exists an open simply connected neighbourhood $ \Omega_C $ of $ C $ in which all branches $ F_n $ of $ f^{-n} $ are well-defined, and,  either $ F_n (\Omega_C\cap U)\cap U=\emptyset$, or $ F_n (\Omega_C\cap U)\subset U$.
	\end{enumerate}
\end{prop}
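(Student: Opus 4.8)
The plan is to prove the two equivalences $(b)\Leftrightarrow(c)$ and $(a)\Leftrightarrow(b)$. Throughout I write $W=\mathbb C\setminus P(f)$, which is backwards invariant under $f$; I use that $\bigcup_{n\ge 1}SV(f^n)=\bigcup_{k\ge 0}f^{k}(SV(f))$ is dense in $P(f)$; I fix a domain $V$ with $\overline V\subset U$, $P(f)\cap U\subset V$ and $\infty$ accessible from $V$, as in the proof of \ref{technical-lemma}; and I note that the complementary component of $C$ containing $U$ has $\infty$ accessible (because $\infty$ is accessible from $U$), so $U\cap\mathrm{fill}(C)=\emptyset$. The implications $(b)\Leftrightarrow(c)$ will be planar topology, $(b)\Rightarrow(a)$ is the component-wise version of the argument already used in \ref{technical-lemma2}, and $(a)\Rightarrow(b)$ — which is equivalent to $(a)\Rightarrow(c)$ once $(b)\Leftrightarrow(c)$ is available — is the heart.

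For $(c)\Rightarrow(b)$: if $\Omega_C$ is as in $(c)$ it can contain no critical or asymptotic value of any $f^n$ (otherwise the corresponding branch of $f^{-n}$ is not single-valued there), hence $\Omega_C$ is an open set disjoint from a dense subset of $P(f)$, so $\Omega_C\cap P(f)=\emptyset$; also $\infty\notin\Omega_C$ (no branch of $f^{-1}$ is single-valued near the essential singularity). Then $K:=\widehat{\mathbb C}\setminus\Omega_C$ is closed, connected, contains $\infty$, and misses $C$; for any complementary component $H$ of $C$ with $\infty$ not accessible from $H$ we have $\partial H\subset C\subset\Omega_C$, so $K\cap H=K\cap\overline H$ is clopen in $K$ and therefore empty, i.e.\ $H\subset\Omega_C$. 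Hence $\mathrm{fill}(C)\subset\Omega_C$ and $P(f)\cap\mathrm{fill}(C)=\emptyset$. For $(b)\Rightarrow(c)$ one reverses this: since $\mathrm{fill}(C)$ is a continuum disjoint from the closed set $P(f)\cup\{\infty\}$, a standard argument (the components of the complement of a connected closed subset of $\widehat{\mathbb C}$ are simply connected) produces a simply connected $\Omega'\subset W$ with $\mathrm{fill}(C)\subset\Omega'$; replacing $\Omega'$ by the component of $\Omega'\setminus\overline V$ containing $C$ (again simply connected and disjoint from $P(f)$, as $\overline V\cup\{\infty\}$ is connected) yields $\Omega_C$ with $\Omega_C\cap P(f)=\emptyset$ and $\Omega_C\cap U$ connected, exactly as in \ref{technical-lemma}. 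As $\Omega_C$ is simply connected and disjoint from $SV(f)$, every branch $F_n$ of $f^{-n}$ is single-valued on $\Omega_C$, and $F_n(\Omega_C\cap U)$ is a connected subset of $\mathcal F(f)$, hence (by total invariance of $\mathcal F(f)$ and $\mathcal J(f)$) lies in a single component of $f^{-n}(U)$, so it is contained in $U$ or disjoint from $U$. This is $(c)$.

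For $(b)\Rightarrow(a)$: with $\Omega_C$ as just constructed, take $z\in C$, pick $r>0$ with $D(z,r)\subset\Omega_C$, and let $w\in\partial U$ with $f^n(w)=z$. Since $z\notin P(f)\supset SV(f^n)$, $z$ is a regular value of $f^n$, so there is a branch $F_n$ of $f^{-n}$, single-valued on $\Omega_C$, with $F_n(z)=w$; then $F_n(\Omega_C\cap U)$ lies in one component of $f^{-n}(U)$ whose closure contains $w\in\partial U$, and — arguing as at the end of the proof of \ref{technical-lemma2}, using total invariance of $\mathcal J(f)$ and that $\Omega_C\supset\mathrm{fill}(C)$ encircles no postsingular obstruction — this component is $U$ itself. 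Hence $F_n(D(z,r)\cap U)\subset U$ and $f$ is properly invertible at $z$. Finally $(a)\Rightarrow(b)$ is the contrapositive: if $P(f)\cap\mathrm{fill}(C)\neq\emptyset$, then since $P(f)\cap\partial U=\emptyset$ there is $p\in P(f)$ lying in a complementary component $H$ of $C$ from which $\infty$ is not accessible (so $\partial H\subset C$); the task is to produce some $z\in C$ at which proper invertibility fails.

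The step I expect to be the main obstacle is exactly this last one. One has to turn the postsingular point $p$, which is encircled by $C$, into a genuine obstruction for pulling back $U$: using the blow-up property of $\mathcal J(f)$ at a point $z\in C$ near $\partial H$ (so that $z$ has backward orbits approaching $\partial H$ from inside $H$) together with \ref{correspondence-theorem}, one finds an $n$ and a preimage $w\in f^{-n}(z)\cap\partial U$ for which the inverse branch based at $w$, continued along a loop surrounding $p$, fails to return to $U$ — equivalently, $w$ lies on the boundary of a component of $f^{-n}(U)$ distinct from $U$. Then $F_n(D(z,r)\cap U)\not\subset U$ for every $r>0$, so proper invertibility fails at $z$, closing the argument. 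Making this "an encircled singular orbit forces a fold" mechanism precise — controlling how a bounded-type complementary component of $C$ carrying part of a postsingular orbit propagates under backward iteration to a boundary point of a different preimage component of $U$ — is the delicate point, and is where the hypotheses $U$ PS, $P(f)\cap\partial U=\emptyset$ and the local univalence of $f^n$ at non-postsingular boundary points all enter.
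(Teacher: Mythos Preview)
Your arguments for $(b)\Leftrightarrow(c)$ and for $(b)\Rightarrow(a)$ are correct and coincide with the paper's, with somewhat more topological detail in your $(c)\Rightarrow(b)$ than the paper supplies. The genuine gap is exactly where you locate it: the direction $\neg(b)\Rightarrow\neg(a)$. Your proposed mechanism --- using the blow-up property at some $z\in C$ to produce backward orbits entering the bounded complementary region $H$, and then arguing that continuation around the encircled postsingular point forces a branch to leave $U$ --- is not made precise, and it is not clear it can be: a preimage of $z$ landing in $\overline H$ need not lie on $\partial U$, and even if it did you would still owe the step from ``monodromy exists'' to ``$F_n(D(z,r)\cap U)\not\subset U$ for the branch based at a point of $\partial U$''.

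The paper's route is different and avoids backward dynamics entirely; it leans instead on the one-sided neighbourhood furnished by the PS hypothesis. Take $\Omega_C$ as in \ref{technical-lemma} and set $\Omega_U\coloneqq\Omega_C\cap U$: this is simply connected and, using both PS and $P(f)\cap\partial U=\emptyset$, disjoint from $P(f)$. If $(c)$ fails then no simply connected neighbourhood of $C$ avoids $P(f)$, so one can pick $z_0\in C$ and $r>0$ with $D(z_0,r)\cup\Omega_U$ multiply connected and surrounding a point of $P(f)$. Consequently some branch $F_n$ of $f^{-n}$, univalent on $D(z_0,r)$, does not extend univalently to $\Omega_U$. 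Choosing $u_0\in D(z_0,r)\cap U\subset\Omega_U$ and continuing $F_n$ through $\Omega_U$ from $u_0$, one obtains two single-valued determinations $F_n|_{D(z_0,r)}$ and $F_n|_{\Omega_U}$ that agree at $u_0$ but disagree at some $w\in D(z_0,r)\cap\Omega_U$, giving distinct preimages $w_1=F_n|_{D(z_0,r)}(w)$ and $w_2=F_n|_{\Omega_U}(w)$. Since $F_n|_{\Omega_U}$ is the unique single-valued continuation on $\Omega_U$, one argues that $w_1\notin U$, so $F_n(D(z_0,r)\cap U)\not\subset U$ and proper invertibility fails. The moral is that $\Omega_U$ does all the work your $H$ was meant to do: it is a large simply connected piece of $U$ abutting $C$ on which every branch is globally defined, and the failure of $(c)$ is converted directly into monodromy of a single branch over $D(z_0,r)\cup\Omega_U$, with no need to track where backward iterates go.
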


\begin{proof}
	We address first the equivalence between (b) and (c). 
	To see that (b) implies (c), observe that, by {\ref{technical-lemma2}}, there exists a simply connected domain $ \Omega_C $, disjoint from $ P(f) $, such that $ C\subset \Omega_C $ and $ \Omega_C\cap U $ is connected.  Hence, $ \Omega_C\cap U $ is simply connected, for being the connected intersection of two simply connected sets. In such a domain, all branches $ F_n $ of $ f^{-n} $ are well-defined. Moreover, since $ \Omega_C\cap U $ is connected, so $ F_n(\Omega_C\cap U) $ is connected. By the total invariance of the Fatou and Julia sets, it follows that either $ F_n (\Omega_C\cap U)\cap U=\emptyset$ or $ F_n (\Omega_C\cap U)\subset U$. Hence, (b) implies (c).
	
	Conversely, we note that if all inverse branches are well-defined in $ \Omega_C $, then $P(f)\cap \Omega_C=\emptyset $. In particular, since $ \Omega_C $ is a simply connected 
	 neighbourhood of $ C $, it  must contain $ \textrm{fill} (C) $ (recall that $ \textrm{fill} (C) $ consists of $ C $ and the components of its complement from which infinity is not accessible; hence, a simply connected neighbourhood of $ C $ includes $ \textrm{fill} (C) $). Hence,  we have \[P(f)\cap \textrm{fill}(C)=\emptyset.\] Therefore, (c) implies (b).
	
	It is left to prove the equivalence between (a) and (c). We note that (c) implies (a) trivially. Next, we prove that, if (c) does not hold, neither does (a).
	
	By assumption,  $ P(f)\cap\partial U=\emptyset $. Hence, for every $ z\in C $, there exists a sufficiently small disk $ D(z, r) $, $ r=r(z)>0 $, such that every branch $ F_n $ of $ f^{-n} $ is well-defined in $ D(z, r) $. On the other hand, since $ U $ is PS, by  {\ref{technical-lemma}}, there exists a simply connected domain $ \Omega_C $, such that $ \Omega_U\coloneqq\Omega_C\cap U $ is connected, simply connected and disjoint from $ P(f) \cap U$.

	Since we are assuming that (c) does not hold, we claim that there exists a point $ z_0\in C $ and $ n\geq 1 $, such that a branch $ F_n $ of $ f^n $, well-defined in $ D(z_0, r) $ does not extend conformally to $ \Omega_U$.
	Indeed, if (c) does not hold, then any neighbourhood $ \Omega_C $ of $ C $ given by the \ref{technical-lemma} would meet $ P(f) $ (outside $ \overline{U} $). Equivalently, for such $ \Omega_C $ and $ \Omega_U\coloneqq\Omega_C\cap U $, any neighbourhood of $ \overline{\Omega_U} $ is multiply connected (otherwise it would be a simply connected neighbourhood of $C\subset {\Omega_C }$ disjoint from $ P(f) $, so (c) would hold). 
	
	In particular, there exists a point $ z_0\in C $ and $ r>0 $ so that $ D(z_0, r)\cup\Omega_U $ is multiply connected and  $ D(z_0, r)\cup\Omega_U $ surrounds points in $ P(f) $. Thus, there exists at least a branch $ F_n $ of $ f^n $, well-defined in $ D(z_0, r) $ does not extend conformally to $ \Omega_U$, as claimed. 
	
	Finally, we have to prove that, for such $ F_n $, it does not hold	\[ F_n(D(z,r)\cap U)\subset U.\]
	Indeed, take $ u_0\in D(z_0, r)\cap U  $, and consider the conformal extension of $ F_n $ to $ \Omega_U $ with basepoint $ u_0 $. Then, $ F_n|_{D(z_0, r)} $ is univalent, as well as $ F_n|_{\Omega_U} $. However, since $ F_n $ does not extend conformally to $ \Omega_U $, $ F_n|_{D(z_0, r)\cup \Omega_U} $ is a multivalued function. Hence, there exists $ w\in D(z_0, r)\cap \Omega_U $ such that $ w=f^n(w_1)=f^n(w_2) $, with $ w_1\in F_n(D(z_0, r)) $, $ w_2\in F_n(\Omega_U) $. Hence $ w_1\notin U $, because otherwise $ w_1\in \Omega_U$ and  $ F_n $ would be multivalued at $ w\in \Omega_U $. Therefore $ f^n $ is not properly invertible with respect to $ U $, as desired.
\end{proof}

From Proposition \ref{prop-boundary-components} we deduce  Proposition \ref{prop-caract-localsurj}.
\begin{proof}[Proof of  Proposition \ref{prop-caract-localsurj}]
Observe that $ U $ being SPS is equivalent to say that,  for every boundary component $ C\subset \partial U $,\[P(f)\cap \textrm{fill}(C)=\emptyset.\] Then, the equivalence (b)-(c) in Proposition \ref{prop-boundary-components} ends the proof.
\end{proof}

Finally, we shall give an intuition of how a non-SPS Fatou component would look like. First, let us look at the following example of a rational map $ f $ which is not properly invertible. 
\begin{example}{\bf (Non-properly invertible function, {\normalfont\cite{schmidt}})}
	The function \[f(z)=\frac{64}{(z+3)(z-3)^2}-3\]
	considered in \cite{schmidt} is not locally surjective with respect to the invariant attracting basin $ U $ (in black in Fig. \ref{fig-schmidt}). Indeed, the neighbourhood marked in Figure \ref{fig-schmidt} is mapped conformally under $ f $ onto the other marked neighbourhood. It is clear that the corresponding inverse branch send points in $ U $ to points in $ U $ and to points outside $ U $ (in its preimage $ V $) simultaneously.
	For a more precise description of the dynamics, we refer to  \cite{schmidt}.
\begin{figure}[htb!]\centering
	\includegraphics[width=10cm]{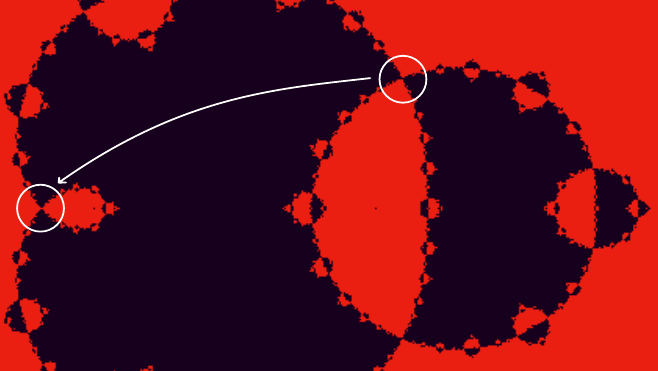}
	\setlength{\unitlength}{10cm}
		\put(-0.27, 0.28){\textcolor{white}{$V$}}
	\put(-0.7, 0.28){\textcolor{white}{$U$}}
		\put(-0.7, 0.41){\textcolor{white}{$f$}}
	\caption{\footnotesize Dynamical plane of $ f(z)= \frac{64}{(z+3)(z-3)^2}-3$, which is not locally surjective. 
	}\label{fig-schmidt}
\end{figure}
\end{example}

Note that the previous Fatou component satisfies having a bi-accessible boundary point. This is never the case for Fatou components of transcendental entire functions: every finite point in the boundary has a unique access from the Fatou component \cite[Thm. 3.14]{Bargmann}. Moreover, if $ U $ is ergodic, every finite point $ z\in\partial U $ is contained in a unique cluster set (\ref{teo:A}). Hence, it seems plausible that any PS Fatou component  is, automatically, SPS.

Indeed,  a Fatou component not satisfying this condition would have a complicated boundary: there would exist a connected component $ C $ of $ \partial U $ such that \[P(f)\cap \textrm{fill}(C)\neq\emptyset.\]Since we are assuming that $ P(f)\cap C=\emptyset $, it follows that there would exist a connected component $ V $ of $ \mathbb{C}\smallsetminus C $ for which infinity is not accessible. By invariance of $ U $ and  normality, $ V $ must be a Fatou component;  either an attracting basin, a preimage of it, or an escaping wandering domain.

Moreover, if $ U $ is ergodic, by \ref{teo:A}, any connected component  $ C $ of $ \partial U $ is either a cluster set or it is contained in a cluster set. Hence, if $ U $ is a non-SPS Fatou component, there would exist  $e^{i\theta}\in \partial \mathbb{D} $ such that \[P(f)\cap \textrm{fill}(Cl_\mathbb{C}(\varphi, e^{i\theta}))\neq\emptyset.\] See Figure \ref{fig-clustersets}.

\begin{figure}[htb!]\centering
	\includegraphics[width=13cm]{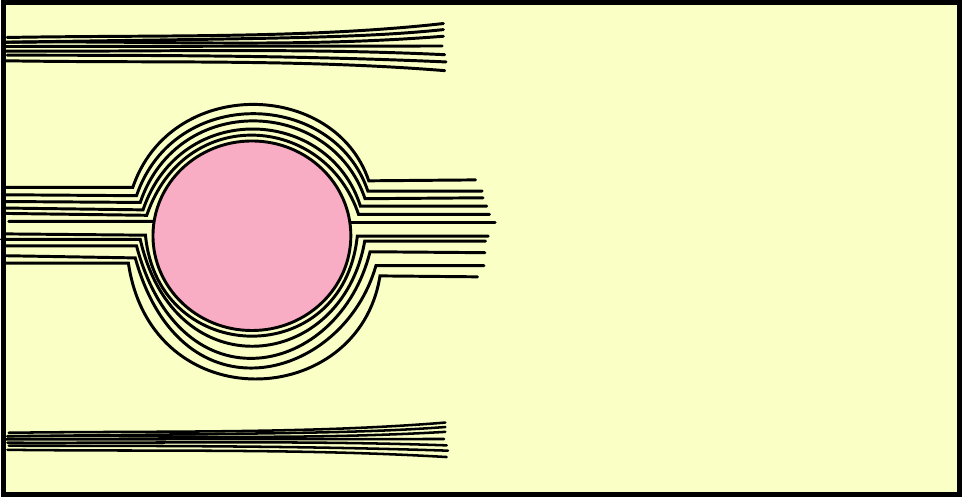}
	\setlength{\unitlength}{13cm}
	\put(-0.05, 0.47){$U$}
	\put(-0.75, 0.28){$V$}
	\caption{\footnotesize Schematic representation of how a non-SPS Fatou component would look like. One may imagine the boundary of $ U $ (in $ \mathbb{C} $) as a collection of curves (as in a Cantor bouquet). One of them, corresponding to, say, $ Cl_\mathbb{C}(\varphi, e^{i\theta}) $ is not a Jordan curve, but it encloses a bounded region, $ V $, which is a Fatou component, by normality. For $ U $ to be non-SPS, this enclosed Fatou component $ V $ must contain a postsingular value. It is precisely the presence of this postsingular value in $ \textrm{fill}(Cl_\mathbb{C}(\varphi, e^{i\theta})) $ what prevents the definition of inverse branches around $ Cl_\mathbb{C}(\varphi, e^{i\theta})  $.
	}\label{fig-clustersets}
\end{figure}

\begin{question}
	Let $ f $ be a transcendental entire function, and let $ U $ be an (ergodic) invariant Fatou component, such that $ \infty $ is accessible from $ U $. If $ U $ is PS and $ P(f)\cap \partial U=\emptyset $, then  is $ U $ SPS?
\end{question}

Finally, we discuss the relation of proper invertibility with local surjectivity, a closely related notion used in \cite{schmidt, Imada}, to study the accessibility of periodic points in the boundary of invariant Fatou components. The definition of local surjectivity reads as follows.

\begin{defi}{\bf (Local surjectivity)}
	Let $ f $ be a holomorphic function, and let $ U $ be an invariant Fatou component. Let $ z\in\partial U $. We say $ f $ is \textit{locally surjective} (at $ z $ with respect to $ U $) if there exists $ r>0 $ such that \[ f(D(z,r)\cap U)=f(D(z,r))\cap U.\]
\end{defi}

It is easy to see that $ f $ being locally surjective on $ \partial U $ (understood as it is locally surjective with respect to $ U $ at every point of $ \partial U $) is equivalent to $ f $ being properly invertible on $ \partial U $ (again, understood pointwise with respect to $ U $). However, since we are interested in inverse branches, the concept of proper invertibility is more convenient.
\section{Postsingularly separated Fatou components and the associated inner function: \ref{teo:C}}\label{sect-top-hyp}
We prove the following version of \ref{teo:C}, which  gives more details on the behaviour of boundary orbits for the associated inner function. One should view this general version of \ref{teo:C} as a significantly stronger version of Lemma \ref{lemma-radial-limit} for PS Fatou components.

We use the notation $ \Theta_\infty $ and $ \Theta_\mathbb{C} $ introduced in Section \ref{sect-inner-function}. Let us recall the definitions. 
\[\Theta_\infty\coloneqq \left\lbrace e^{i\theta}\in\partial \mathbb{D}\colon \varphi^*(e^{i\theta})=\infty\right\rbrace\] 
\[\Theta_\mathbb{C}\coloneqq \partial\mathbb{D}\smallsetminus\Theta_\infty= \left\lbrace e^{i\theta}\in\partial \mathbb{D}\colon Cl_\rho (\varphi, e^{i\theta})\neq\left\lbrace \infty\right\rbrace \right\rbrace\] 

\begin{named}{Theorem C} {\bf (General version)}\label{teo-C-llarg} 	Let $ f $ be a transcendental entire function, and let $ U $ be  an invariant Fatou component, such that $ \infty $ is accessible from $ U $.  Let $ \varphi\colon\mathbb{D}\to U $ be a Riemann map, and let $ g\coloneqq\varphi^{-1}\circ f\circ\varphi $ be the corresponding associated inner function. Assume $ U $ is PS. Then, the following holds. \begin{enumerate}[label={\em (\alph*)}]

		\item {\em(Finite principal points avoid singularities)}\label{teo-C-llargA'}
		Let $ e^{i\theta}\in\partial\mathbb{D} $. If $ e^{i\theta}\in\Theta_\mathbb{C} $, then $ e^{i\theta} $ is not a singularity of $ g $ and $ g(e^{i\theta})\in\Theta_\mathbb{C} $. 
		
In particular, if, for some $ n\geq1 $, $ e^{i\theta}\in\partial\mathbb{D} $ is  a singularity for $ g^n $,  then $ e^{i\theta}\in\Theta_\infty $. \label{teo-C-llargA}
	
		\item {\em(Few singularities)}
		For almost every $ e^{i\theta}\in\partial\mathbb{D} $ (with respect to the Lebesgue measure), there exists $ r\coloneqq r(e^{i\theta})>0 $ such that $ g $ is holomorphic in $ D(e^{i\theta},r) $. In particular, the set $ { \textrm{\em Sing}} (g)$ has zero Lebesgue measure. \label{teo-C-llargB}
	
		\item {\em (Backward and forward orbit at typical boundary points)}
		For almost every $ e^{i\theta}\in\partial\mathbb{D} $ (with respect to the Lebesgue measure),
		 there exists $ r\coloneqq r(e^{i\theta})>0 $ such that all branches $ G_n $ of $ g^{-n} $ are well-defined in $ D(e^{i\theta},r) $, for all $ n\geq 0 $. Moreover, for every $ n\geq 1 $, there exists $ \rho\coloneqq\rho(e^{i\theta},n)>0 $ such that $ g^n $ is holomorphic in $ D(e^{i\theta},\rho) $. \label{teo-C-llargC}
		
		\item {\em(Radial limit $ g^* $ at a singularity)}
		Let $ e^{i\theta}\in\partial\mathbb{D} $ be a a singularity for $ g $, and assume $ g^*(e^{i\theta}) $ exists. Then, either $ g^*(e^{i\theta})\in\mathbb{D} $, and $ \varphi(g^*(e^{i\theta}))\in U $ is an asymptotic value for $ f $; or $ g^*(e^{i\theta})\in\Theta_\infty $. \label{teo-C-llargD}
	\end{enumerate}
\end{named}
\begin{proof} We prove the different statements separately.
	\begin{enumerate}[label={ (\alph*)}]
			\item 
		Let $ e^{i\theta}\in\Theta_\mathbb{C} $. By Lemma \ref{lemma-disconnected-cluster-sets}, $ Cl_\mathbb{C}(\varphi, e^{i\theta}) $ is connected, so is $ f(Cl_\mathbb{C}(\varphi, e^{i\theta})) $, and hence it is contained in a component $ C $ of $ \partial U $. 
		Since $ U $ is assumed to be PS, by the {\ref{technical-lemma}}, there exists a domain $ \Omega_C$  such that  $ C\subset \Omega_C $,   $ \Omega_C $ is simply connected,
		$ \Omega_C\cap U $ is connected,
		and $ \Omega_C $ is disjoint from $ P(f) \cap U$.
		Moreover, $ \Omega_C $ can be chosen so that $ \varphi^{-1}(\Omega_C\cap U) $ is a crosscut neighbourhood of some  $ e^{i\alpha}\in\partial\mathbb{D}$. Note that there are no postsingular values of $ g $ in $ \varphi^{-1}(\Omega_C\cap U) $.
		
		Note that, since 	$ e^{i\theta}\in\Theta_\mathbb{C} $,	we can choose  $ z\in Cl_\rho(\varphi, e^{i\theta}) \cap\mathbb{C}$. Since $ z $ is a finite principal point, for any $ r>0 $, there exists a null-chain $ \left\lbrace D_n\right\rbrace _n\subset D(z,r) $. We choose $ r $ small enough so that $ f(D(z,r))\subset \Omega_C $. Hence, for all $ n\geq0 $, $ f(D_n) $ is a crosscut of $ U $ contained in $ \Omega_C $.
		
		Since  $ \Omega_C \cap U$  is simply connected and disjoint from $ P(f) \cap U$, all inverse branches are well-defined in $ \Omega_C \cap U$. In particular, there exists a branch $ F_1 $ of $ f^{-1} $ and an inverse branch $ G_1 $ of $ g^{-1} $ such that \[\varphi^{-1}(F_1(\Omega_C\cap U)) =G_1(\varphi^{-1}(\Omega_C\cap U) ) \] contains a crosscut neighbourhood of $ e^{i\theta}$. 
		Hence, any sufficiently small crosscut neighbourhood $ \mathbb{D}_{D} $ around $ e^{i\theta} $ is mapped conformally under $ g $ to a crosscut neighbourhood $ g(\mathbb{D}_{D}) $ around $ e^{i\alpha} $. Thus, $ \overline{g(\mathbb{D}_{D})}\neq\mathbb{D} $. This already implies that $ e^{i\theta} $ is not a singularity (see Rmk. \ref{rmk-3.12} and \cite[Thm. II.6.6]{garnett}).
	By Lemma \ref{lemma-radial-limit}, it is clear that 
	$ g(e^{i\theta}) \in \Theta_\mathbb{C}$.
	
	Finally, note that, if $ e^{i\alpha} $ is a singularity for $ g $, then $ e^{i\alpha} \in \Theta_\infty $. The last statement follows directly from the backwards invariance of $ \Theta_\infty $. Indeed, if $ e^{i\alpha} $ is a singularity for the inner function $ g^n $ ($ n\geq 1 $ taken minimal), then $ g^{n-1}(e^{i\alpha}) $ is a singularity for $ g $. Then, $ g^{n-1}(e^{i\alpha})\in\Theta_\infty $, so $ e^{i\alpha}\in \Theta_\infty $.
	
	\begin{figure}[htb!]\centering
		\includegraphics[width=15cm]{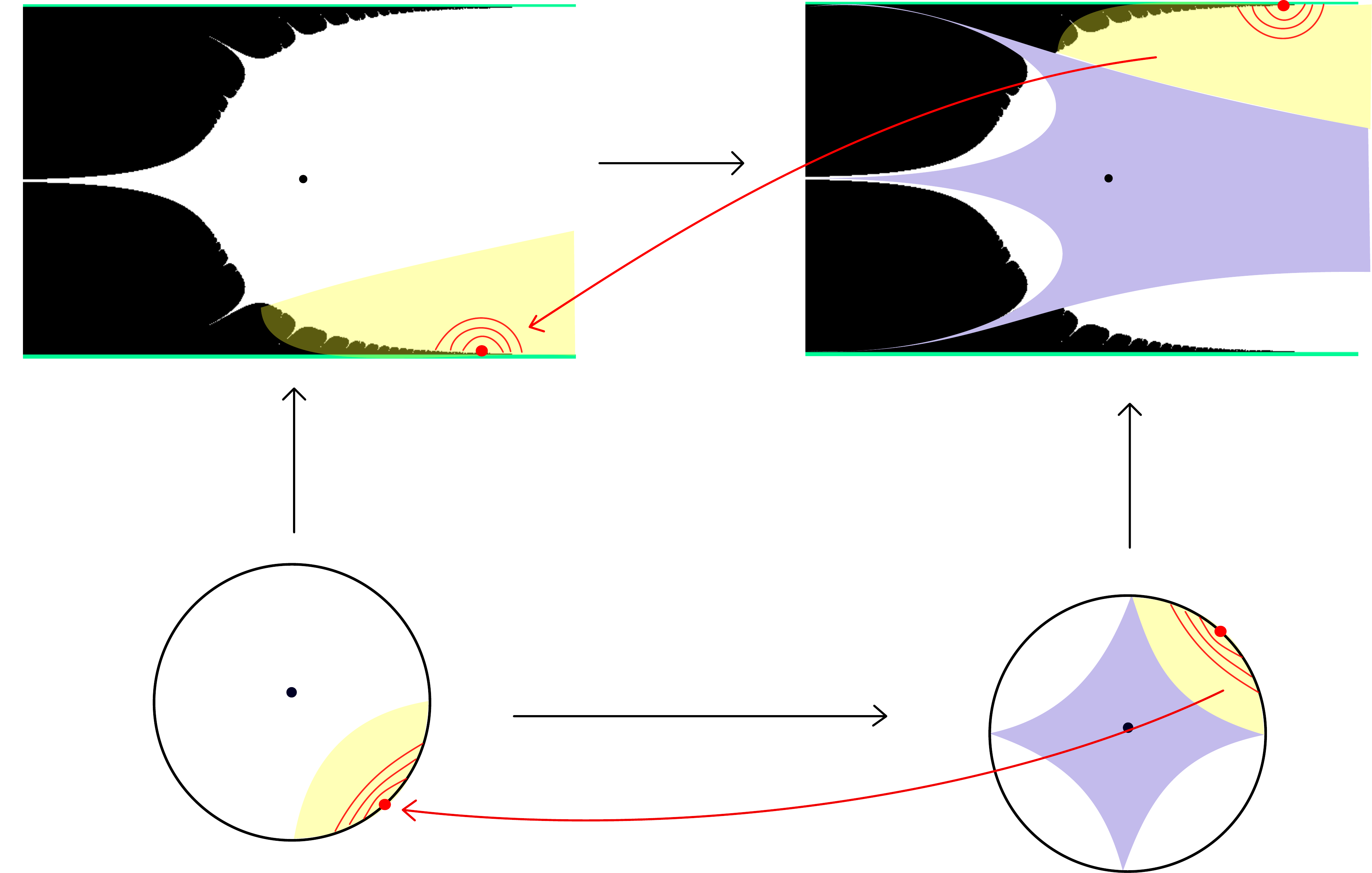}
		\setlength{\unitlength}{15cm}
		\put(0.01, 0.6){$U$}
		\put(-0.61, 0.6){$U$}
		\put(-0.66, 0.36){$z$}
		\put(-0.08, 0.645){$f(z)$}
			\put(-0.52, 0.52){$f$}
				\put(-0.5, 0.125){$g$}
					\put(-0.52, 0.44){$F_1$}
				\put(-0.5, 0.015){$G_1$}
		\put(-0.15, 0.5){$V$}
		\put(-0.81, 0.29){$\varphi$}
			\put(-0.2, 0.29){$\varphi$}
		\put(-0.9, 0.17){$\mathbb{D}$}
			\put(-0.28, 0.17){$\mathbb{D}$}
		\put(-0.73, 0.01){$e^{i\theta}$}
			\put(-0.11, 0.185){$e^{i\alpha}$}
		\caption{\footnotesize Set-up of the proof of {\em\ref{teo-C-llargA}} for PS Fatou components. Given $ e^{i\theta}\in\Theta_\mathbb{C} $, we find a crosscut neighbourhood around it which is mapped conformally onto another crosscut neighbourhood, implying that $ e^{i\theta} $ is not a singularity for $ g $.
		}
	\end{figure}
		
		\item
		Since $ \textrm{Sing} (g)\subset\Theta_\infty $, and  $ \lambda(\Theta_\infty)=0 $,  it follows that $ \lambda(\textrm{Sing}( g))=0 $. Hence, for  $ \lambda $-almost every $ e^{i\theta}$, there exists $ r=r(e^{i\theta})>0 $ such that $ g $ is holomorphic in $ D(e^{i\theta},r) $.

		\item 
		Again, the first statement follows from the same argument than in {\em \ref{teo-C-llargA'}}, applied to the set of full measure $ \Theta_\mathbb{C}$. Indeed, for every point  $e^{i\theta}\in \Theta_\mathbb{C} $, there exists a boundary component $ C $ such that $ Cl_\mathbb{C}(\varphi, e^{i\theta})\subset C $. by the {\ref{technical-lemma}}, there exists a domain $ \Omega_C$  such that  $ C\subset \Omega_C $,   $ \Omega_C $ is simply connected,
		$ \Omega_C\cap U $ is connected,
		and $ \Omega_C $ is disjoint from $ P(f) \cap U$.
		Moreover, $ \varphi^{-1}(\Omega_C\cap U) $ contains a crosscut neighbourhood $ N_\theta $ of $ e^{i\theta} $.
		Since  $ \Omega_C \cap U$  is simply connected and disjoint from $ P(f) \cap U$, all inverse branches $ F^n $ of $ f^n $ are well-defined in $ \Omega_C \cap U$. In particular, for each branch $ F_n $ of $ f^{-n} $, there exists an inverse branch $ G_n $ of $ g^{-n} $ such that \[\varphi^{-1}(F_n(\Omega_C\cap U)) =G_n(\varphi^{-1}(\Omega_C\cap U) ) .\] Thus, all inverse branches $ G^n $ of $ g^n $ are well-defined in $ N_\theta$. By considering $ g $ as its maximal meromorphic extension, and using Schwarz reflection, we get that  there exists $ r\coloneqq r(e^{i\theta})>0 $ such that all branches $ G_n $ of $ g^{-n} $ are well-defined in $ D(e^{i\theta},r) $, for all $ n\geq 0 $. 
		
		The second statement follows from the forward invariance of $ \Theta_\mathbb{C} $, by induction. Indeed, for $ n=1 $, if $ e^{i\theta}\in  \Theta_\mathbb{C}  $, then $  e^{i\theta} $ is not a singularity for $ g $, so there exists a disk around $  e^{i\theta} $ in which $ g $ is holomorphic. Now, for all $ n\geq 2 $, assume $ g^{n-2} $ is holomorphic in a neighbourhood of $ e^{i\theta} $. By (a),  $ g^{n-1}(e^{i\theta}) \in \Theta_\mathbb{C}$, so  $ g^{n-1}(e^{i\theta}) $ is not a singularity of $ g $, meaning that there exists a neighbourhood of $ g^{n-1}(e^{i\theta}) $ in which $ g $ is holomorphic. This already implies the existence of a neighbourhood of $  e^{i\theta} $ in which $ g^n $ is holomorphic.

		\item 
		We let $ e^{i\theta}\in\partial\mathbb{D} $ be a singularity, and assume $ g^*(e^{i\theta}) $ exists. There are two possibilities: either $ g^*(e^{i\theta}) \in\mathbb{D}$, or $ g^*(e^{i\theta}) \in\partial\mathbb{D}$. In the first case, it is clear that $ \varphi(g^*(e^{i\theta})) $ must be an asymptotic value for $ f $. We shall prove that, in the second case, $ g^*(e^{i\theta}) \in\Theta_\infty$. Assume, on the contrary, that  $ g^*(e^{i\theta}) \in\Theta_\mathbb{C}$. Then, by {\em \ref{teo-C-llargC}}, for some $ r>0 $, all branches of $ g^{-1} $ are well-defined in $ D(g^*(e^{i\theta}),r) $. This is a contradiction because $ e^{i\theta} $ was assumed to be a singularity, and hence it cannot be mapped locally homeomorphically to $ g^*(e^{i\theta}) $.
	\end{enumerate}
\end{proof}

\begin{remark}
	We shall make the following remarks on  \ref{teo-C-llarg}.
	
		\begin{enumerate}
		\item On \cite[Prop. 2.7]{bfjk-accesses} it is proved that $ \textrm{Sing} (g)\subset\overline{\Theta_\infty} $, for a general invariant Fatou component (i.e. without the PS assumption). Hence, {\em\ref{teo-C-llargA}} show how the result can be strengthened to $ \textrm{Sing} (g)\subset\Theta_\infty $, for PS Fatou components. Taking into account that $ \lambda(\Theta_\infty)=0 $ and $ \lambda(\overline{\Theta_\infty})=1 $ for a wide class of Fatou components (compare Sect. \ref{sect-ergodic}), we believe that our result is a noteworthy improvement.
		
		\item Regarding  {\em\ref{teo-C-llargC}}, we note that, for almost every $ e^{i\theta} $, inverse branches $ G_n $ of $ g^n $ are well in a disk of fixed radius (depending only on $ e^{i\theta} $, but not on $ n $). However, when iterating forward, we can only ensure that $ g^n $ is holomorphic on a disk whose radius depends on $ n $. In the general case, the result cannot be improved. Indeed, if $ g^* $ is ergodic and $ g|_\mathbb{D} $ has infinite degree, it is shown in  \cite[Lemma 8]{BakerDominguez99}, \cite[Thm. 1.4]{Bargmann} that
		\[\overline{\bigcup_{n\geq 0}\textrm{Sing} (g^n)}=\partial \mathbb{D},\] where $ \textrm{Sing} (g^n) $ stands for the set of singularities of the inner function $ g^n $, as defined in Definition \ref{def-singularity}.
		
		Hence, in general, there is no open disk around a boundary point which is never mapped to a singularity of $ g $.
		
		\item Finally, we note that in the literature, there are two distinct ways of considering iteration in $ \partial \mathbb{D} $ for a given inner function $ g $. On the one hand, the approach followed in \cite{BakerDominguez99, Bargmann} consists of truncating the orbit of a point when it falls into a singularity, as in the iteration of meromorphic functions in $ \mathbb{C} $. 
		
		On the other hand, there is the approach of \cite{DoeringMané1991} of considering iteration on the set \[\left\lbrace e^{i\theta}\in\partial\mathbb{D}\colon (g^*)^n(e^{i\theta}) \textrm{ exists for all }n\geq 0 \right\rbrace ,\] which has full Lebesgue measure in $ \partial\mathbb{D} $. This procedure allows us to iterate at singularities, as long as their radial limit under $ g $ is well-defined. 
		
		Using this approach,  {\em\ref{teo-C-llargD}} tells us that, whenever we can iterate at a singularity,  its orbit either eventually enters  $ \mathbb{D} $ and hence converges to the Denjoy-Wolff point, or its orbit is completely contained in $ \Theta_\infty $. 
	\end{enumerate}
\end{remark}
\section{Dynamics on the boundary of unbounded invariant Fatou components: \ref{teo:D}}\label{sect-strong}
Finally, we use the machinery developed in the previous sections to prove this more general version of  \ref{teo:D}. 

\begin{named}{Theorem D} {\bf (General version)} Let $ f $ be a transcendental entire function, and let $ U $ be  an invariant Fatou component, such that $ \infty $ is accessible from $ U $.
	Assume $ U $ is SPS. Then, the following holds. \begin{enumerate}[label={\em (\alph*)}]
		\item \label{MT1}	Periodic points in $ \partial U $ are accessible from $ U $.
		\item\label{MT2} If a component $ C $ of $ \partial U $ contains a periodic point, then every other point in $ C $ is escaping.
		\item\label{MT3} If, additionally, $ U $ is recurrent, then periodic points and escaping points are dense in $ \partial U $.
	\end{enumerate}
\end{named}

\begin{remark} The previous statement deserves some comments.
	
	\begin{itemize}
		\item {\em (Radial limit at periodic points)}
		
		\ref{teo:D} {\em \ref{MT1}} states that, given a periodic point $p\in \partial U $, there exists $ e^{i\theta} \in\partial \mathbb{D}$ such that $ \varphi^*(e^{i\theta})=p $. If $ U $ is assumed additionally to be ergodic,  such $ e^{i\theta} $ is unique, by \ref{teo:A}, and it is radially periodic under $ g $. The fact that $ e^{i\theta} $ is unique implies that it has exactly the same period as $ p $.
		
		Furthermore, it is well-known that the boundary map $ g\colon\partial\mathbb{D}\to\partial\mathbb{D}  $ of any inner function of finite degree (i.e. of a finite Blaschke product) is semi-conjugate to the shift map $ \sigma_d $ in the space  $ \Sigma_d $  of sequences of $ d $ symbols (see e.g. \cite[Sect. 8.1]{ivrii2023inner}). Therefore, each periodic point for $ \sigma_d $  in $ \Sigma_d $ (that is, periodic sequences) corresponds to a periodic point for $ g $ in $ \partial\mathbb{D} $, except for the ones corresponding to $  \overline{0}$ and $ \overline{d-1} $, which are identified. When $ f|_{U} $ has finite degree, this gives an upper bound on the number of periodic points of a given period in $ \partial U $. Indeed, since $ g|_{\partial\mathbb{D}}$ has exactly $ d^n-1 $ periodic points of period $ n $, there are at most $ d^n -1$ periodic points of period $ n $ in $ \partial U $.
		\item It follows trivially from \ref{teo:D} {\em \ref{MT2}} that any boundary connected component of a SPS Fatou component can have at most one periodic point.
		\item  {\em (Accessibility of escaping points)}
		
		In the case of escaping points, we may ask what can be said about their accessibility from $  U $. In \cite[Thm. B]{Fagella-Jové} it is proven that the Baker domains of $ f(z)=z+e^{-z} $ have no accessible escaping point. However,  the Baker domain of $ f(z)=z+1+e^{-z} $ has accessible escaping points, as it follows easily from \cite{Vasso}.
		Hence, it remains as an open problem to find conditions under which no escaping point is accessible from the Baker domain.
	\end{itemize}
\end{remark}

\begin{proof}[Proof of the \ref{teo:D}]  We prove the different statements separately.
	
	\vspace{0.2cm}
	{\em \ref{MT1}}	{\em Periodic points in $ \partial U $ are accessible from $ U $.}
	
	\vspace{0.2cm}
	First	note that, if $ U $ is SPS,  any periodic point in $ \partial U $ must be repelling. Indeed, attracting and Siegel periodic points lie in the Fatou set, while parabolic and Cremer periodic points lie in $ P(f) $. 
	
	Let $ p $ be a periodic point in $ \partial U $, which is repelling, and assume $ f^n(p)=p $. Let $ C $ be the connected component of $ \partial U $ containing $ p $. By the \ref{technical-lemma2}, there exists a simply connected domain $ \Omega_C$ containing the connected component $ C\subset \partial U $, such that  $ \Omega_C\cap P(f)=\emptyset $, and $ \Omega_C\cap U $ is connected. 
	
	Let $ F_n $ be the branch of $ f^{-n} $ fixing $ p $. It extends conformally to  $ \Omega_C $ and, by the \ref{technical-lemma2}, $ F_n (\Omega_C\cap U)\subset U $. Note that, not only $ F_n $ is well-defined in  $ \Omega_C $, but its iterates $ F^m_n $, for all $ m\geq 0 $.
	
	Let $ r>0 $ be such that $ D(p, r)\subset \Omega_C $. Since $ p $ is repelling, choosing $ r $ smaller if needed, we can assume $ F_n (D(p, r))\subset D(p, r) $. 
	
	Let us choose $ z_0\in D(p, r) \cap U $ and define $ z_m\coloneqq F_n ^m(z_0) \in D(p, r) \cap U $. Since $ \Omega_C\cap U $ is connected, there exists a curve $ \gamma\subset \Omega_C\cap U $ connecting $ z_0 $ and $ z_1 $.
	Observe that $ \left\lbrace F_n^m\right\rbrace _m $ is well-defined and normal in $ \Omega_C$, because $ F_n^m(\Omega_C)\subset W $, for all $ m\geq 0 $ (where $ W=\mathbb{C}\smallsetminus P(f) $, as introduced in Sect. \ref{proof-of-TL}, which is backwards invariant). 
	
	Thus, any limit function $ g $ must be constantly equal to $ p $ in $ D(p, r) \cap\Omega_C $, so $ F_n^m\to g\equiv p $ uniformly on compact subsets of $ \Omega_C $. In particular, $ F_n^m|_{\gamma}\to p $ uniformly.
	Hence, \[\bigcup\limits_{m\geq 0} F_n^m(\gamma)\] is a curve in $ U $ landing at $ p $, showing that $ p $ is accessible from $ U $, as desired.
	
	\vspace{0.2cm}
	{\em \ref{MT2}}	{\em If a component $ C $ of $ \partial U $ contains a periodic point, then every other point in $ C $ is escaping.}
	
	\vspace{0.2cm}
	Let $ p\in \partial U$ be a periodic point of $ f $ (which must be repelling), and denote by $ C $ the connected component of $ \partial U $ for which $ p\in C $. We have to prove that
	\[C\smallsetminus\left\lbrace p\right\rbrace \subset \mathcal{I}(f).\]
	Without loss of generality, assume $ p $ is fixed by $ f $.  By the \ref{technical-lemma2}, there exists an open neighbourhood of $ C $, say $ \Omega_C $, in which the branch $ F_1 $ of $ f^{-1} $ fixing $ p$ is well-defined. In fact, $ \left\lbrace  F_1^n\right\rbrace_n$ is well-defined in $ \Omega_C $ and, as in the proof of {\em \ref{MT1}}, for every compact set $ K\subset\Omega_C$, we have $ F_1^n|_{K}\to p $ uniformly. Moreover, there exists $ r>0 $ small enough so that $ D(p,r)\subset\Omega_C $ and $ F_1(D(p,r))\subset D(p,r) $ (see Fig. \ref{fig1}).
	\begin{figure}[htb!]
		\centering
		\begin{subfigure}[b]{0.46\textwidth}
			\centering
			\includegraphics[width=\textwidth]{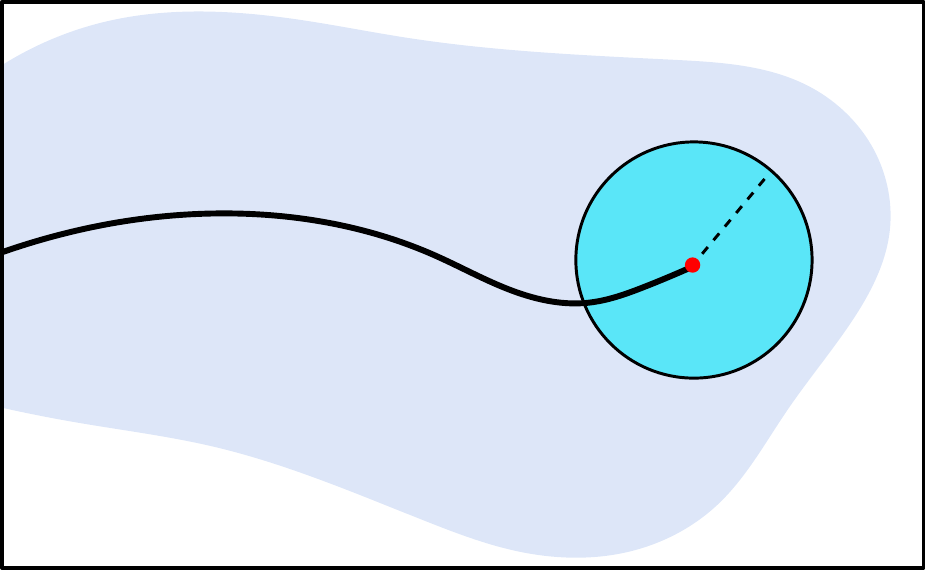}
			\setlength{\unitlength}{\textwidth}
			\put(-0.98, 0.2){$ \Omega_C $}
			\put(-0.23, 0.38){$ r $}
			\put(-0.26, 0.29){$p$}
			\put(-0.83, 0.4){$C$}
			\caption{}
			\label{fig1}
		\end{subfigure}
		\hfill
		\begin{subfigure}[b]{0.46\textwidth}
			\centering
			\includegraphics[width=\textwidth]{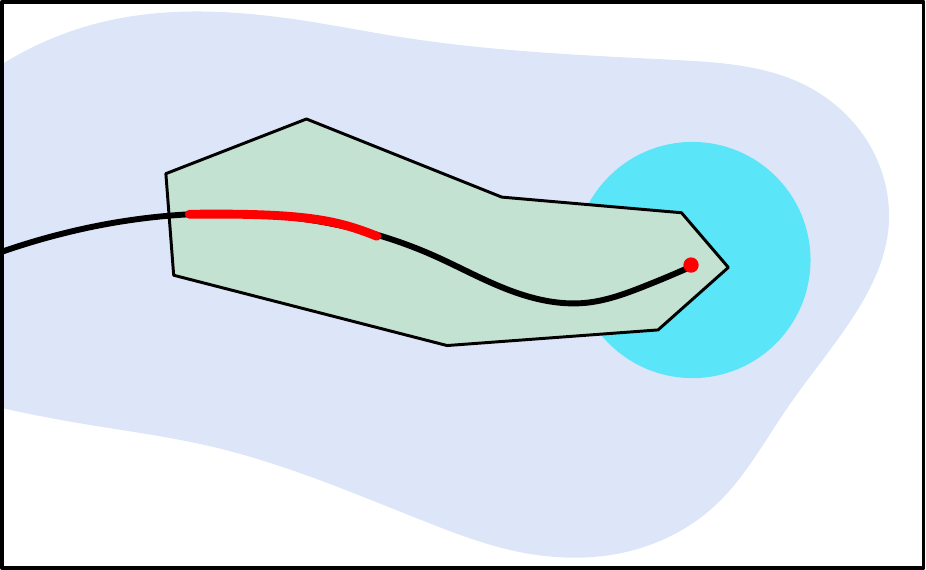}
			\setlength{\unitlength}{\textwidth}
			\put(-0.98, 0.2){$ \Omega_C $}
			\put(-0.29, 0.33){$p$}
			\put(-0.73, 0.39){$K$}
			\put(-0.5, 0.42){$K'$}
			\caption{}
			\label{fig2}
		\end{subfigure}
		\begin{subfigure}[b]{0.46\textwidth}
			\centering
			\includegraphics[width=\textwidth]{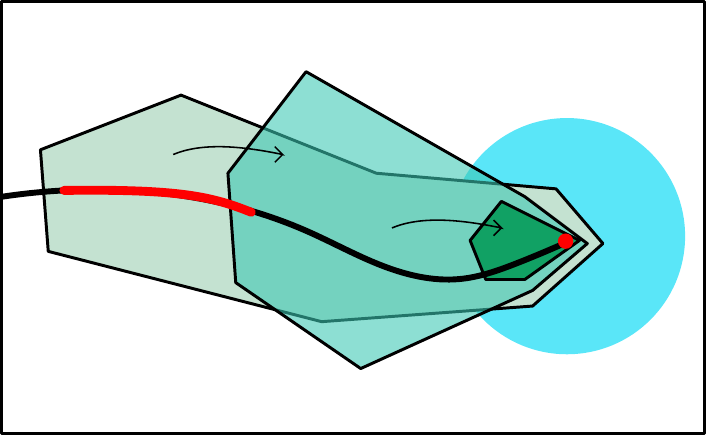}
			\setlength{\unitlength}{\textwidth}
			\put(-0.98, 0.2){$ K' $}
			\put(-0.39, 0.31){$F_1$}
			\put(-0.7, 0.42){$F_1$}
			\caption{}
			\label{fig3}
		\end{subfigure}
		\hfill
		\begin{subfigure}[b]{0.46\textwidth}
			\centering
			\includegraphics[width=\textwidth]{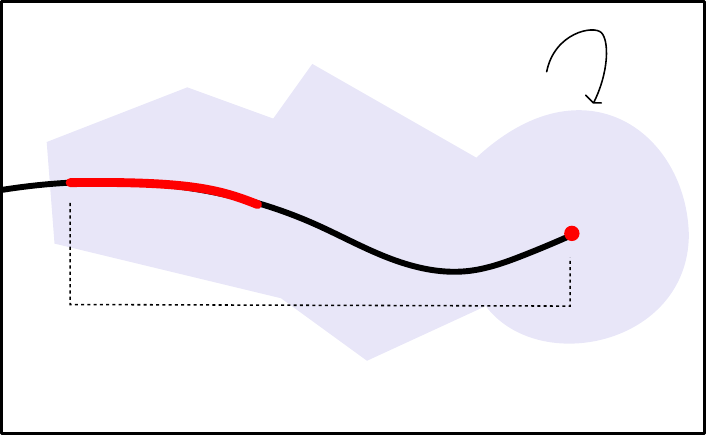}
			\setlength{\unitlength}{\textwidth}
			\put(-0.98, 0.27){$ \Omega'_C$}
			\put(-0.6, 0.13){$ M$}
			\put(-0.22, 0.3){$p$}
			\put(-0.8, 0.37){$K$}
			\put(-0.13, 0.55){$F_1$}
			\caption{}
			\label{fig4}
		\end{subfigure}
		
		\caption{\footnotesize Steps on the proof of {\em \ref{MT2}}.}
	\end{figure}

	Now, let $ z\in C\smallsetminus\left\lbrace p\right\rbrace $. To prove that $ z $ is escaping, we have to see that, for any compact set $ K\subset{C} $, there exists $ n_0 $ such that $ f^n(z)\notin K $, for all $ n\geq n_0 $. Since $ p $ is assumed to be fixed, by continuity it follows that $ f(C)\subset C $. 
	Hence $ f^n(z)\in C $ and, since  $ C $ is unbounded,  it is enough to show that $ z $ escapes from any compact set in $ C $. 
	
	Hence, let us fix $ K$ compact subset of $C $ and let us show that $ z $ escapes from $ K $. To do so, we will construct a domain $ \Omega'_C\subset \Omega_C $, forward invariant under $ F_1 $, containing both $ p $ and $ K $. We will show that, if $ z $ does not escape from $ K $, then  $ \left\lbrace f^n(z)\right\rbrace _n $ should be entirely contained in $ \Omega'_C $. Once we are in this situation, we will reach a contradiction using standard arguments based on Schwarz-Pick lemma.
	
	We start by constructing the set $ \Omega'_C $. To do so, we choose $ K' $ compact connected subset of $ \Omega_C $ such that $ K\subset \textrm{Int } K' $ and $ p\in K' $ (see Fig. \ref{fig2}). Moreover, without loss of generality,  we can choose $ K' $ so that there is a connected component of $C\cap K' $ containing both $ K $ and $ p $. On the other hand, since $ K' $ is a compact subset of $ \Omega_C $, we have $ F_1^n|_{K'}\to p $ uniformly, so there exists $ N $ such that $ F_1^N(K')\subset D(p,r) $. 
	
	Let us define the following sets: \[V\coloneqq \left\lbrace z\in \textrm{Int }K'\colon F_1^n(z)\in\Omega_C, \textrm{ for all }n\leq N\right\rbrace \cup D(p,r), \hspace{0.5cm}\Omega'_C\coloneqq \bigcup\limits_{n= 0}^N F_1^n(V).\]
	We note, on the one hand, that \[C\cap \textrm{Int }K'\subset V,\] because, as $ f(C) \subset C$, points in $ C\cap \textrm{Int }K' $ do not leave $ \Omega_C $ under iteration.  In particular, the connected component of $C\cap K' $ containing both $ K $ and $ p $ is in $ V $.
	Hence, either $ V $ is a connected open set, or $ V $ can be redefined to be the connected component of $ V $ containing $ K $. In both cases, $ K\subset V $ (see Fig. \ref{fig3}). 
	
	Moreover, $ \Omega'_C $ is also an open connected set, since it is the union of open connected sets, all containing $ p $. By definition, it is forward invariant under $ F_1 $, and  $ K $ is compactly contained in $ \Omega'_C $. Observe that both $ F_1 $ is well-defined and univalent in $ \Omega'_C $.

	Since  $ F_1(\Omega'_C)\subset \Omega'_C$, we have, by Schwarz-Pick lemma,\[ \rho_{\Omega'_C}(p, F_1(w))= \rho_{\Omega'_C}(F_1(p), F_1(w))< \rho_{\Omega'_C} (p,w),\] for all $ w\in\Omega'_C $. Equivalently, if $ w, f(w)\in \Omega'_C$, it holds \[\rho_{\Omega'_C} (p,w)< \rho_{\Omega'_C} (p,f(w)) .\] 
	
	Let \[ M\coloneqq \max\limits_{w\in K} \rho_{\Omega'_C} (p,w),\](see Fig. \ref{fig4}). We have $ K\subset \overline{D_{\Omega'_C} (p, M) }$, and $ F_1(D_{\Omega'_C} (p, M))\subset D_{\Omega'_C} (p, M) $. 
	Moreover, since $ D_{\Omega'_C} (p, M) $ is compactly contained in $ \Omega'_C $, there exists $ \lambda_M>1 $ such that \[ \lambda_M \rho_{\Omega'_C} (w, p)\leq \rho_{\Omega'_C} (f(w),p) ,\] for all $ w $ such that $ w,f(w)\in  D_{\Omega'_C} (p, M) $.
	
	Finally, let us show that the point $ z $ should escape from the compact $ K $. Assume, on the contrary, that $  f^n(z)$ belongs to $  K $, for infinitely many $ n $'s. For these $ n $'s it holds $ f^n(z)\in D_{\Omega'_C} (p, M)$. Since  $ D_{\Omega'_C} (p, M) $ is backwards invariant under $ f $, if $ z $ does not escape from $ K $, then
	\[\left\lbrace f^n(z)\right\rbrace _n\subset  D_{\Omega'_C} (p, M) .\] Hence,  \[ \lambda^n_M \rho_{\Omega'_C} (z, p)\leq \rho_{\Omega'_C} (f^n(z),p) ,\] for all $ n\geq 0 $, so $ \rho_{\Omega'_C} (f^n(z),p)\to \infty $, as $ n\to\infty $. In particular, there exists $ n\geq 0 $ such that $ \rho_{\Omega'_C} (f^n(z),p) >M$, implying that $ f^n(z)\notin  D_{\Omega'_C} (p, M)$. This is a contradiction with the fact that $ \left\lbrace f^n(z)\right\rbrace _n\subset  D_{\Omega'_C} (p, M) $. 
	
	Hence, $ z $ escapes from the compact set $ K $, and applying the same argument to any compact set $ K\subset C $, we get that $ z\in\mathcal{I}(f) $, as desired.
	
	\vspace{0.2cm}
	{\em \ref{MT3}} {\em If, additionally, $ U $ is recurrent, then periodic points and escaping points are dense in $ \partial U $.}
	
	\vspace{0.2cm}
	
	Let us start by proving the density of periodic points. Since $ U $ is recurrent, then
	$\omega_U  $-almost every boundary point has dense orbit in $ \partial U $ (Thm. \ref{thm-ergodic-properties}), so it is enough to approximate points in $ \partial U $ having dense orbit by periodic points in $ \partial U $. Hence, we choose $ z_0\in\partial U $ with dense orbit and $ \varepsilon>0 $, and we want to show that there is a periodic point in $ D(z_0, \varepsilon)\cap\partial U $.
	
	The idea of the proof is to see that there exists an appropriate branch $ F_n $ of $ f^{-n} $ defined in the hyperbolic disk $ D_{W}(z_0, r_0) $ satisfying that \[F_n(\overline{D_{W}(z_0, r_0)})\subset {D_{W}(z_0, r_0)} , \] where $ W=\mathbb{C}\smallsetminus P(f) $. Then, it follows straightforward from Brouwer fixed-point theorem that $ F_n $ has a fixed point $ p $ in $ \overline{D_{W}(z_0, r_0)} $. The fact that $ p\in\partial U $ is then due to proper invertibility. 
	
	Assume $ z_0\in\partial U $ has a dense orbit, and choose $ r_0>0 $ small enough so that \[D_{W}(z_0, r_0)\subset   D(z_0, \varepsilon)\]  and $ D_{W}(z_0, r_0) $ is simply connected. By the \ref{technical-lemma2}, $ r_0 $ can be chosen small enough  so that, for all $ n>0 $, any branch $ F_n $ of $ f^{-n} $ is defined in $ D_{W}(z_0, r_0) $ and, if $ F_n(z_0 )\in \partial U $, then $ F_n(D_{W}(z_0, r_0)\cap U) \subset U$. 
	
	Choose $ F^*_n$ branch  of $ f^{-n} $ and $ \lambda\in(0,1)$  such that $ F_n^*(z_0)\in\partial U $ and  \[\rho_W(F^*_n(z),F^*_n(w))\leq \lambda\rho_W(z,w), \hspace{0.3cm}\textrm{ for all } z,w\in D_{W}(z_0, r_0).\] Note that this is possible by Proposition \ref{prop-region-of-expansion}. Without loss of generality, we assume $ n=1 $, so the inverse branch we consider is $ F^*_1$.
	
	Now, consider $W_1\coloneqq F_1^*(D_W(z_0, r_0)) $. 
	Hence, \[F_1^*:D_W(z_0, r_0)\to W_1, \hspace{0.5cm} f:W_1\to D_W(z_0, r_0), \]are  conformal. Moreover, for $ r_0 $ small enough, $ W_1 $ is disjoint from any other preimage of $ D_W(z_0, r_0) $.
	Consider $ r_1>0$ such that $ D_W(F_1^*(z_0), r_1) \subset W_1$, and $ r <\frac{r_1}{2}<r_0  $. Define $ W_2\coloneqq D_W(F_1^*(z_0), r) $.  Observe that $ D_W(z,r)\subset W_1 $, for any $ z\in W_2 $. See Figure \ref{fig-dem-densos}.
	
	\begin{figure}[htb!]\centering
		\includegraphics[width=10cm]{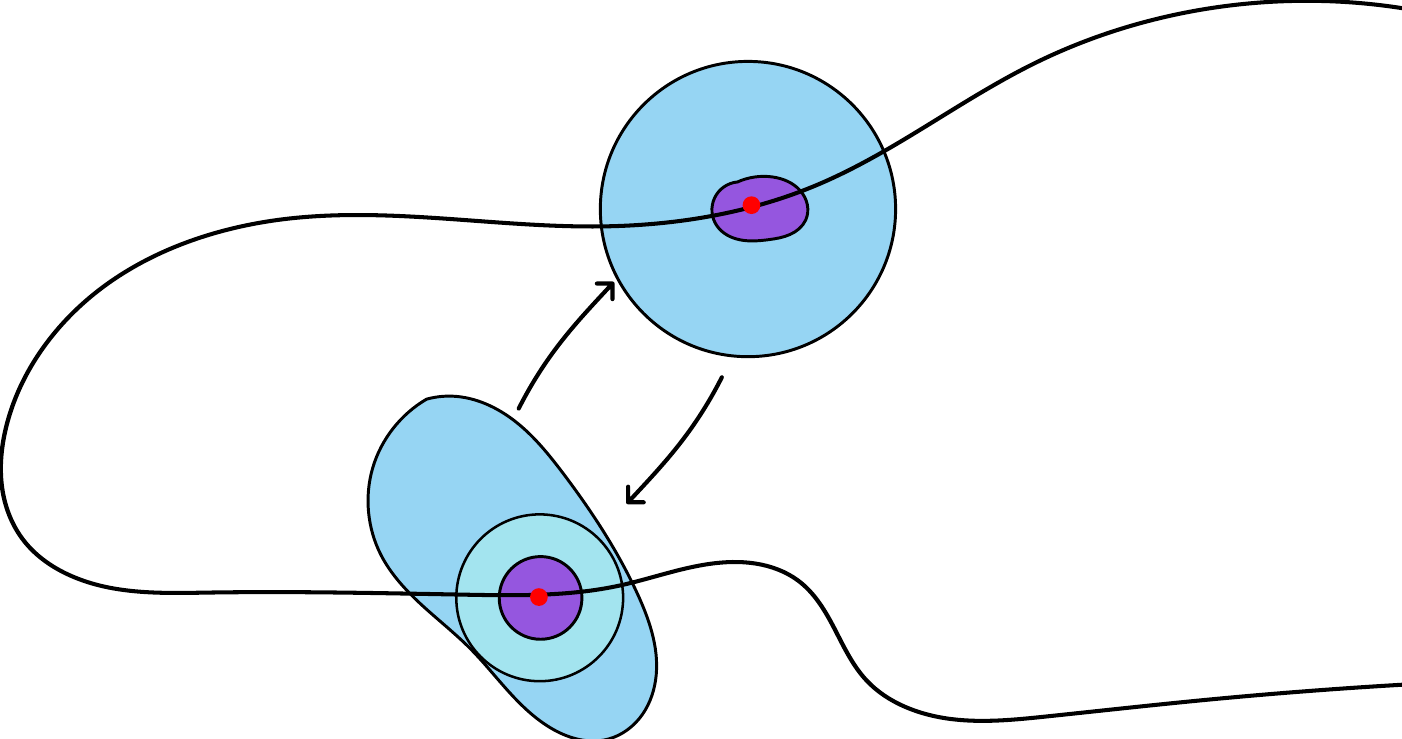}
		\setlength{\unitlength}{10cm}
		\put(-0.64, 0.27){$ f $}
		\put(0.01, 0.21){$ U $}
		\put(-0.6, 0.12){\tiny $W_2$}
		\put(-0.71, 0.14){$W_1$}
		\put(-0.52, 0.2){ $F^*_1$}
		\put(-0.5, 0.41){ $f(W_2)$}
		\put(-0.72, 0.46){ $D_W(z_0,r_0)$}
		\caption{\footnotesize Setting of the proof of {\em\ref{MT3}}. }\label{fig-dem-densos}
	\end{figure}
	
	Since the orbit of $ z_0 $ is dense in $ \partial U $,  $ \left\lbrace f^n(z_0)\right\rbrace _n $ visits infinitely many times $ W_2 $. Hence, we can choose $ n_0 $ such that $ \lambda^{n_0}<\frac{1}{3} $, and $ n_1 $ such that $ f^{n_1+1}(z_0)\in D_W(z_0, r) $ and \[\#\left\lbrace n\leq n_1\colon f^n(z_0)\in W_2 \right\rbrace\geq n_0 . \]  Consider $ \left\lbrace z_n\coloneqq f^n(z_0)\right\rbrace _{n=0}^{n_1+1}\subset W $. Let $ F_{1,n} $ be the unique branch of $ f^{-1} $ with $ F_{1,n}(z_{n+1})=z_{n} $, for $ n=0,\dots, n_1 $ (see Fig. \ref{fig-dem-densos2}). Each of these inverse branches $ F_{1,n} $ is well-defined in $ \Omega_{C_n} $, where $ C_n $ is the boundary component with $ z_n\in C_n $. 
	
	Define \[F_{n_1}\coloneqq F_{1,0}\circ\dots\circ F_{1,n_1}\colon D_{W}(z_0, r)\longrightarrow\mathbb{C}.\] Observe that $ F_{n_1} $ is a branch of $ f^{-{n_1}} $ defined in $ D_W(z_0,r) $, such that $ F_{n_1}(z_0)\in \partial U $.   The \ref{technical-lemma2}  yields that $ F_{n_1}(D_{W}(z_0, r)\cap U)\subset U $.   
	
	\begin{figure}[htb!]\centering
		\includegraphics[width=10cm]{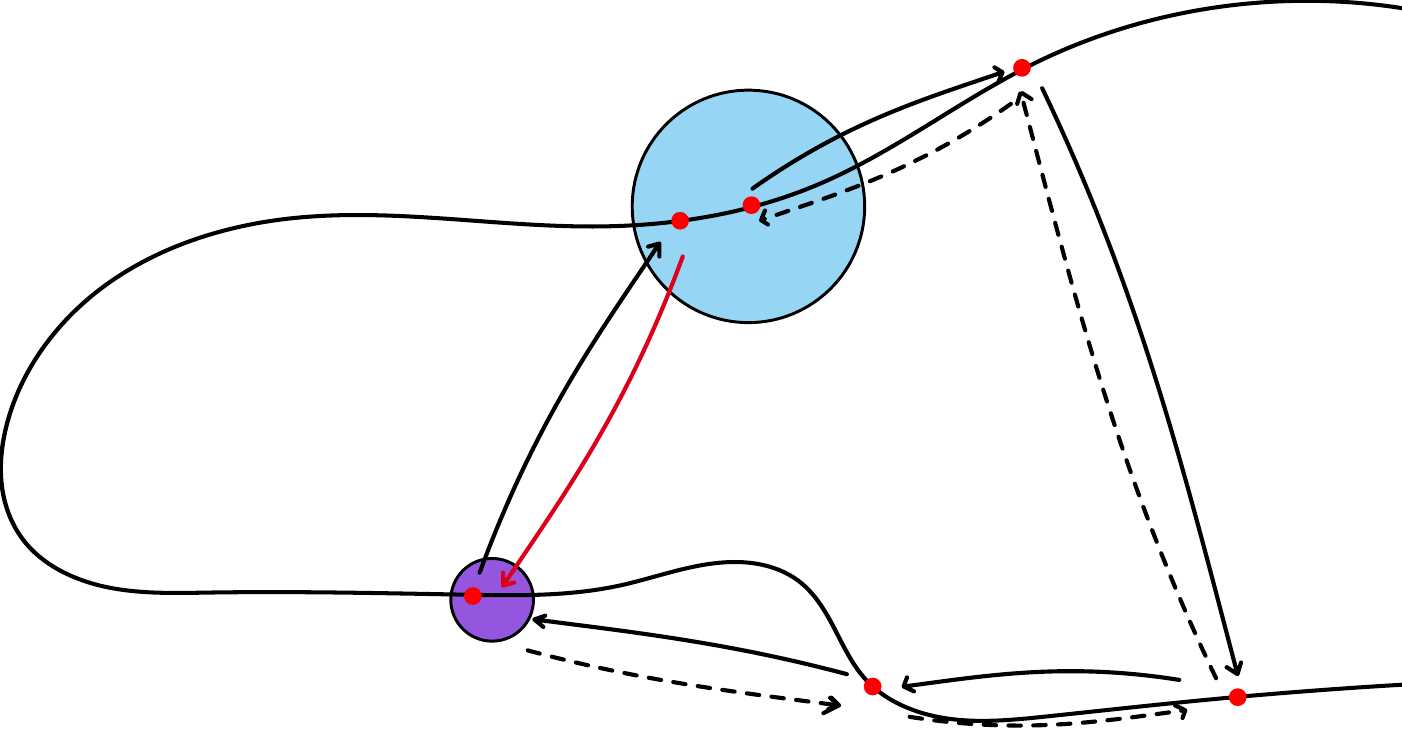}
		\setlength{\unitlength}{10cm}
		\put(-0.48, 0.345){$z_0  $}
		\put(-0.54, 0.385){$z_5 $}
		\put(-0.13, 0){$z_2  $}
		\put(-0.39, 0.05){$z_3 $}
		\put(-0.295, 0.49){$z_1  $}
		\put(-0.73, 0.08){$z_4 $}
		\put(-0.63, 0.24){$ f $}
		\put(-0.16, 0.24){$ f $}
		\put(-0.29, 0.24){$ F_{1,1} $}
		\put(0.01, 0.21){$ U $}
		\put(-0.74, 0.12){$W_2$}
		\put(-0.58, 0.2){ $F_{1,4}=F_1^*$}
		\put(-0.4, 0.45){ $f$}
		\put(-0.38, 0.38){ $F_{1,0}$}
		\put(-0.54, 0.08){ $f$}
		\put(-0.56, 0.005){ $F_{1,3}$}
		\put(-0.27, 0.05){ $f$}
		\put(-0.28, -0.03){ $F_{1,2}$}
		\put(-0.68, 0.44){ $D(z_0,r)$}
		\caption{\footnotesize Schematic representation of $ \left\lbrace z_n\right\rbrace _{n=0}^{n_1+1} $ in $ \partial U $, with $ n_0=1 $ and $ n_1=4 $, and how $ f $ maps these points. We note that, since $ z_4\in W_2 $, then $ F_1^4=F_1^* $.}\label{fig-dem-densos2}
	\end{figure}
	
	\begin{claim*}
		It holds \[F_{n_1}(\overline{D_{W}(z_0, r)})\subset D_{W}(z_0, r) . \] 
	\end{claim*}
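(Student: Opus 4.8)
The plan is to prove that $F_{n_1}$ contracts the $\rho_W$-diameter of $\overline{D_W(z_0,r)}$ by a factor $<\tfrac13$; this suffices, because $F_{n_1}(z_{n_1+1})=z_0$ (since $F_{1,k}(z_{k+1})=z_k$ for every $k$) and $z_{n_1+1}\in\overline{D_W(z_0,r)}$, so $z_0$ lies in the image, and an image containing $z_0$ of $\rho_W$-diameter $<\tfrac{2r}{3}$ is forced inside $D_W(z_0,\tfrac{2r}{3})\subset D_W(z_0,r)$.

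Two observations drive the estimate. First, each elementary branch $F_{1,k}$ is a branch of $f^{-1}$, hence by Proposition \ref{prop-region-of-expansion} it does not increase $\rho_W$ on a neighbourhood of the piece to which it is applied, so the composition $F_{n_1}=F_{1,0}\circ\cdots\circ F_{1,n_1}$ is $\rho_W$-non-expanding on $D_W(z_0,r)$. Second, whenever $z_k\in W_2$ one has $z_{k+1}=f(z_k)\in f(W_2)\subset f(W_1)=D_W(z_0,r_0)$; since $z_k\in W_2\subset W_1$, $f(z_k)=z_{k+1}$, and $f|_{W_1}$ is injective, necessarily $F_1^*(z_{k+1})=z_k$, so the germ of $f^{-1}$ sending $z_{k+1}$ to $z_k$ — namely $F_{1,k}$ — extends to $F_1^*$ on the connected set $D_W(z_0,r_0)$. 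Thus at each of the (at least $n_0$) indices $k\le n_1$ with $z_k\in W_2$, the corresponding elementary branch is $F_1^*$, which is a $\lambda$-contraction of $\rho_W$ on $D_W(z_0,r_0)$.

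Now run the composition from the inside out: set $S_{n_1+1}:=\overline{D_W(z_0,r)}$ and $S_k:=F_{1,k}(S_{k+1})$, so $S_0=F_{n_1}(\overline{D_W(z_0,r)})$; since $z_{n_1+1}\in S_{n_1+1}$, induction gives $z_k\in S_k$, each $S_k$ is a connected compact subset of $W$, and $\textrm{diam}_{\rho_W}(S_k)$ is non-decreasing in $k$ by non-expansion, with $\textrm{diam}_{\rho_W}(S_{n_1+1})=2r$, so $\textrm{diam}_{\rho_W}(S_k)\le 2r$ for all $k$. At a step with $z_k\in W_2$, provided $S_{k+1}\subset D_W(z_0,r_0)$, applying $F_1^*$ multiplies the $\rho_W$-diameter by $\lambda$. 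To secure the proviso, note that $\overline{W_2}$ is compactly contained in $W_1$, so $f(\overline{W_2})$ is a compact subset of $D_W(z_0,r_0)$, say $f(\overline{W_2})\subset D_W(z_0,\rho_2)$ with $\rho_2=\rho_2(r)\to0$ as $r\to0$; hence $z_{k+1}\in f(W_2)$ gives $S_{k+1}\subset D_W(z_{k+1},2r)\subset D_W(z_0,\rho_2+2r)$, and choosing $r$ small enough that $\rho_2(r)+2r<r_0$ (a choice available after $r_0,F_1^*,W_1,r_1$ are fixed and before $n_1$) puts $S_{k+1}$ inside $D_W(z_0,r_0)$. Running through all $n_1+1$ steps, the $\rho_W$-diameter is multiplied by $\lambda$ at least $n_0$ times and never increases otherwise, so $\textrm{diam}_{\rho_W}(S_0)\le\lambda^{n_0}\cdot 2r<\tfrac{2r}{3}$; since $z_0\in S_0$ this yields $S_0\subset D_W(z_0,\tfrac{2r}{3})\subset D_W(z_0,r)$, which is the claim.

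The step requiring the most care is the domain bookkeeping: ensuring simultaneously that at each contracting occurrence the piece $S_{k+1}$ really lies in $D_W(z_0,r_0)$ (so that both the identification $F_{1,k}=F_1^*$ and the $\lambda$-contraction are legitimate), and that at the non-contracting steps $F_{1,k}$ is non-expanding on a set containing $S_{k+1}$. Both are controlled by the a priori bound $\textrm{diam}_{\rho_W}(S_k)\le 2r$ together with the nesting and smallness of the radii $r$, $r_1$, $r_0$ (and the inequality $\rho_2(r)+2r<r_0$); granting this, the conclusion is a bare count of how the diameter evolves along the composition.
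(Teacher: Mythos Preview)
Your proof is correct and follows essentially the same approach as the paper: both track the composed inverse branches step by step, use non-expansion of $\rho_W$ at every step, identify $F_{1,k}$ with the strictly contracting branch $F_1^*$ whenever $z_k\in W_2$, and conclude via the factor $\lambda^{n_0}<\tfrac13$ together with $F_{n_1}(z_{n_1+1})=z_0$ and the triangle inequality. Your framing via the sets $S_k$ and their diameters is a cosmetic repackaging of the paper's pairwise estimate. The one place you are actually more careful than the paper is the domain bookkeeping: the paper asserts that at a contracting step one has $z_{n+1}\in D_W(z_0,r)$ and that the intermediate image lies in $D_W(z_n,r)$, which is not quite justified as written (non-expansion only gives containment in $D_W(z_n,2r)$, and $f(W_2)$ need not lie in $D_W(z_0,r)$). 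Your introduction of $\rho_2(r)$ with $\rho_2(r)+2r<r_0$ cleanly guarantees $S_{k+1}\subset D_W(z_0,r_0)$ at each contracting step, so the identification $F_{1,k}=F_1^*$ and the $\lambda$-contraction are legitimate; this is a genuine (if minor) improvement on the exposition.
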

	\noindent\textit{Proof.}
	Indeed, by {\em \ref{technical-lemma2}}, each time we apply an inverse branch $ F_{1,n} $, the hyperbolic distance $ \rho_W $ does not increase. That is, for all $ z,w\in D_W(z_0,r) $ and $ n\in\left\lbrace0, \dots, n_1 \right\rbrace  $, \[\rho_W(F_{1,n}\circ\dots\circ F_{1,n_1}(z), F_{1,n}\circ\dots\circ F_{1,n_1}(w))\leq\rho_W(F_{1,n+1}\circ\dots\circ F_{1,n_1}(z), F_{1,n+1}\circ\dots\circ F_{1,n_1}(w)).\] 
	Moreover, when the inverse branch we apply is $ F^*_1 $, the hyperbolic distance $ \rho_W $ not only decreases, but it is contracted by the factor $ \lambda $. We claim that this happens each time that $ z_n $ lies in $ W_2 $, so at least $ n_0 $ times.
	Indeed, note that, when $ z_n\in W_2 $,   \[ F_{1,n}\circ\dots\circ F_{1,n_1} (D_W(z_0,r))\subset D_W(z_n,r)\subset W_1,\] and $ z_{n+1}\in D_W(z_0,r) $. Hence, $ F_{1,n} $ coincides with $ F_1^* $, and it acts  as a contraction by $ \lambda $ in $ F_{1,n}\circ\dots\circ F_{1,n_1} (D_W(z_0,r)) $. 
	Then, 
	\[\rho_W(F_{n_1}(z),F_{n_1}(w))\leq\lambda^{n_0}\rho_W(z,w)<\frac{1}{3}r.\] In particular,
	\[\rho_W(F_{n_1}(z_0), z_0)=\rho_W(F_{n_1}(z_0), F_{n_1}(z_{n_1+1}))\leq \lambda^{n_0}\rho_W(z_0,z_{n_1+1})< \frac{1}{3}r.\]
	Therefore, applying the triangle inequality, one deduces that $F_{n_1}(w) \in  \overline{D_W(z_0,r)} $, for any $ w\in D_W(z_0,r) $, as desired. \hfill$ \blacksquare $
	
	\vspace{0.3cm}
	Finally, since  \[F_{n_1}(\overline{D_{W}(z_0, r)})\subset {D_{W}(z_0, r)},\] Brouwer fixed-point theorem ensures the existence of a fixed point $ p $ for $ F_{n_1} $ in $ \overline{D_{W}(z_0, r)} $, which corresponds to a periodic point of $ f $, which must be repelling for $ f $ and hence belongs to the Julia set. Moreover, all $ w\in D_{W}(z_0, r) $ converge to $ p $ under iteration of $ F_{n_1} $. In particular, if we choose $ w\in D_{W}(z_0, r)\cap U $, we have $ w_m\coloneqq F_{n_1}^m(w) \in  D_{W}(z_0, r)\cap U$ with $ w_m\to p $ as $ n\to\infty $, leading to a sequence of points in $ U $ approximating $ p $, so $ p\in\partial U $, as desired.

	Finally, to see  that escaping points are dense in $ \partial U $, note that, {\em\ref{MT2}} implies that every periodic point in $ \partial U $ is approximated by escaping points in $ \partial U $. Hence, since periodic points are dense in $ \partial U $ (under the assumption of $ U $ recurrent), escaping points are also dense.
\end{proof}

\section{Extension of the results to parabolic basins}\label{sect-parabolic}
Parabolic basins are always excluded when considering postsingulary separated Fatou components, since the parabolic fixed point $ p $ is always in the postsingular set. However, if this is the only point of $ P(f) $ in $ \partial U $, i.e. if \[P(f)\cap \partial U=\left\lbrace p\right\rbrace ,\]  then we shall see that we are in a similar situation than the one considered in the sections above and, with minor modifications, the proofs go through. 

Indeed, on the one hand, note that the construction in Sections \ref{proof-of-TL}, \ref{sect-top-hyp} and \ref{sect-strong} is done independently for each connected component of $ \partial U $ (except proving that periodic points are dense in $ \partial U $, Thm. D {\em\ref{MT3}}). On the other hand, parabolic basins are ergodic, so $ \partial U $ consists of uncountably many components (Thm. A), and only one of them contains the parabolic fixed point. Hence, for the rest of components of $ \partial U $, the statements in Theorems C and D hold. Moreover, as we will see, the connected component of $ \partial U $ containing the parabolic fixed point can be treated separately, so in fact the statements in Theorems C and D hold for every component of $ \partial U $. This is the content of Theorems C' and D' which are analogous to \ref{teo:C} and \ref{teo:D}, respectively.

Next, we define PS and SPS parabolic basins, and we state Theorems C' and D'. Finally, we give an idea of the proof.
\begin{defi}{\bf (Postsingularly separated  parabolic basins)}\label{defi1parab}
	Let $ f $ be a transcendental entire function, and let $ U $ be an invariant attracting basin of a parabolic point $ p\in\partial U $. We say that $ U $ is {\em postsingularly separated} (PS)   if  there exists a domain $ V $, such that $ \overline{V}\subset U\cup \left\lbrace p\right\rbrace  $ and \[P(f)\cap U\subset V.\]

	\noindent We say that $ U $ is  {\em strongly postsingularly separated} (SPS)   if there exists a simply connected domain $ \Omega $ and a domain $ V $ such that $ \overline{V}\subset U\cup \left\lbrace p\right\rbrace  $, $ \overline{U}\subset \Omega $,  and \[{P(f)}\cap\Omega\subset V .\] 
\end{defi}
The parabolic basin of $ f(z)=\exp(\frac{z^2}{2}-2z) $ considered in \cite[Ex. 4]{FagellaHenriksen} is  SPS, as well as the one of $ f(z) =ze^{-z}$, considered in \cite{BakerDominguez99, Fagella-Jové}.

\begin{named}{Theorem C'}\label{teo:C'} {\bf (Singularities for the associated inner function)} 
	Let $ f $ be a transcendental entire function, and let $ U $ be  an invariant parabolic basin, such that $ \infty $ is accessible from $ U $.  Let $ \varphi\colon\mathbb{D}\to U $ be a Riemann map, and let $ g\coloneqq\varphi^{-1}\circ f\circ\varphi $ be the corresponding associated inner function. Assume $ U $ is PS. 
	
	\noindent Then, the set of singularities of $ g $ has zero Lebesgue measure in $ \partial \mathbb{D} $. Moreover, if $ e^{i\theta}\in\partial \mathbb{D} $ is a singularity for $ g $, then  $ \varphi^*(e^{i\theta})=\infty $.
\end{named}

\begin{named}{Theorem D'}\label{teo:D'} {\bf (Boundary dynamics)} Let $ f $ be a transcendental entire function, and let $ U $ be  an invariant parabolic basin, such that $ \infty $ is accessible from $ U $.
	Assume $ U $ is SPS. Then, periodic points in $ \partial U $ are accessible from $ U $.
	Moreover, both periodic and escaping points in $ \partial U $ are dense in $ \partial U $.
\end{named}

To prove Theorems C' and D' it is also left to deal with the component of $ \partial U $ containig the parabolic fixed point, and to explain how to adapt the proof  periodic points are dense in $ \partial U $, Thm. D {\em\ref{MT3}}.
In the sequel, we denote by $ p $ the parabolic fixed point of $ U $, and 
we fix the Riemann map that satisfies $ \varphi^*(1)=p $. Note that $ Cl_\mathbb{C}(\varphi, 1) $ is a connected component of $ \partial U $.

First note that, if we define the set of expansion $ W $ as in \ref{technical-lemma2}, i.e.	\[W\coloneqq \mathbb{C}\smallsetminus P(f),\]
 it follows that $ p\notin W $, since $ p\in P(f) $. Hence, $ W $ is no longer a neighbourhood of $ \partial U $, but of $ \partial U \smallsetminus \left\lbrace p\right\rbrace $, and we only have the expanding metric on $ \partial U \smallsetminus \left\lbrace p\right\rbrace $. This is not a problem because the expanding metric $ \rho_W $ is only needed in the proof of density of periodic points (Thm. D {\em\ref{MT3}}), but in fact we do not need $ \rho_W $ defined at $ p $. Indeed, it is enough to have $ \rho_W $ defined in a neighbourhood of points whose orbit is dense in $ \partial U $, and the point $ p $ does not have a dense orbit.

\begin{figure}[htb!]\centering
	\includegraphics[width=15cm]{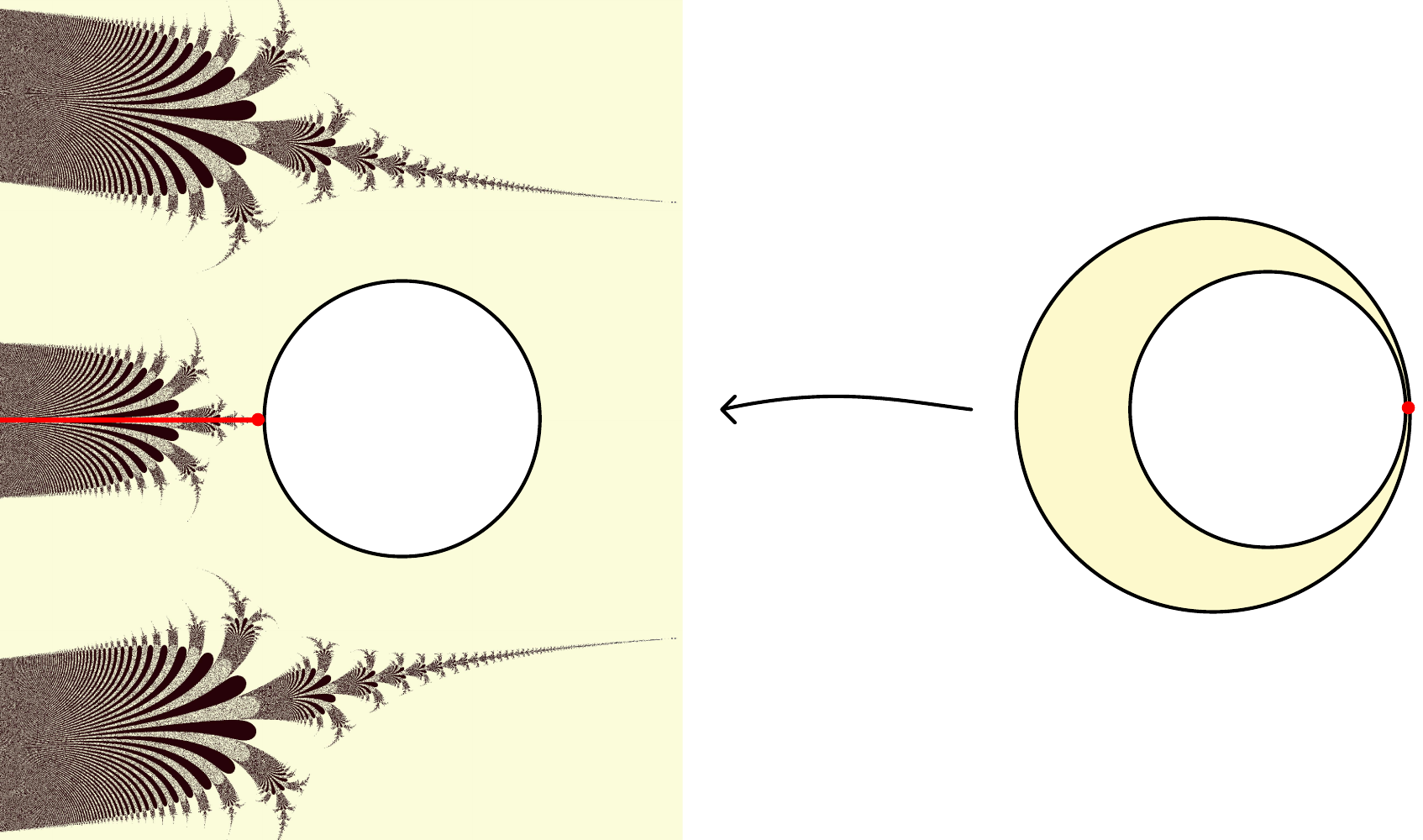}
	\setlength{\unitlength}{15cm}
	\put(-0.41,0.32){$  \varphi $}
	\put(0,0.3){$  1=\varphi^*(0) $}
	\put(-0.805,0.285){$ 0 $}	
	\put(-1.035,0.285){$ \mathbb{R}_- $}
	\put(-0.65,0.285){$ \mathcal{P} $}
	\put(-0.76,0.29){\tiny$ \times$}
	\put(-0.76,0.27){\tiny$\frac{1}{e}$}
		\put(-0.79,0.29){\tiny$ \cdots$}
	\caption{\footnotesize Dynamical plane of $ f(z)=ze^{-z} $, which has a SPS parabolic basin $ U $ with parabolic fixed point 0. The function has only one asymptotic value (0) which is fixed, and one critical value $ (1/e) $, which converges to 0. Hence $ P(f)\subset \mathcal{P}\cup \left\lbrace 0\right\rbrace  $, where $ \mathcal{P} $ is a parabolic petal containing $ 1/e $, and we can take $ \Omega=\mathbb{C} $ in Definition \ref{defi1parab}. This shows that $ U $ is SPS.
	}\label{fig-parab}
\end{figure}

We remark that it may not be possible to find an open neighbourhood $ \Omega_p $ of $ Cl_\mathbb{C}(\varphi, 1)  $ disjoint from $ P(f) $ such that  $ \Omega_p\cap U $ is connected. As a counterexample,  see Figure \ref{fig-parab}. However, we prove that we can find a neighbourhood of $ Cl_\mathbb{C}(\varphi, 1)  $ where one inverse branch is well-defined. This is enough to prove, in the PS case, that $ 1 $ is not a singularity for the associated inner function ending the proof of \ref{teo:C'}; and,  in the SPS case, that every point in $ Cl_\mathbb{C}(\varphi, 1)\smallsetminus\left\lbrace p\right\rbrace   $ is escaping. Hence, there are no periodic boundary points nor points with dense orbit in $ Cl_\mathbb{C}(\varphi, 1)  $. This would finish the proof of  \ref{teo:D'}.

	\begin{lemma}{\bf (Cluster set of 1)}\label{lemma-cluster-set-1} 	Let $ f $ be a transcendental entire function, and let $ U $ be  an invariant parabolic basin, such that $ \infty $ is accessible from $ U $.
	Let $ \varphi\colon\mathbb{D}\to U $ be a Riemann map, and let $ \varphi^*(1)=p $ be the parabolic fixed point. Assume $ U $ is PS. Then, 
 $ 1 $ is not a singularity for the associated inner function.
	
	In addition, if $ U $ is SPS,  
	\[Cl_\mathbb{C}(\varphi, 1)\smallsetminus\left\lbrace p\right\rbrace \subset \mathcal{I}(f).\]
\end{lemma}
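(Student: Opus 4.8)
The plan is to produce a neighbourhood $\mathcal N$ of $C\coloneqq Cl_\mathbb{C}(\varphi,1)$ together with a single branch $F$ of $f^{-1}$ defined on $\mathcal N$ that fixes $p$ and satisfies $F(\mathcal N\cap U)\subset U$; the first assertion then follows by repeating the argument in the proof of part (a) of \ref{teo-C-llarg}, and the second by repeating the argument in the proof of \ref{teo:D} {\em \ref{MT2}}, the only new feature being that $p$ is a parabolic (rather than repelling) fixed point. First note that $C$ really is one connected component of $\partial U$: parabolic basins are ergodic, so \ref{teo:A} applies and, since $\varphi^*(1)=p\neq\infty$, the cluster set $Cl_\mathbb{C}(\varphi,1)$ is connected. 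Since $p$ is a parabolic fixed point, $f'(p)$ is a root of unity, in particular $f'(p)\neq0$, so $f$ has a holomorphic local inverse $F$ with $F(p)=p$ on a disk $D(p,\varepsilon_0)$; shrinking $\varepsilon_0$ so that it contains no critical point of $f$, this $F$ is holomorphic throughout $D(p,\varepsilon_0)$. By the Leau--Fatou flower theorem (see e.g.\ \cite{milnor}), every orbit in $U$ converges to $p$ tangentially to an attracting direction, so $U$ accesses $p$ through an attracting petal $\mathcal P^+\subset U$ with $f(\mathcal P^+)\subset\mathcal P^+$, and $F$ maps $\mathcal P^+$ near $p$ into a larger attracting petal of $f$ that is still contained in $U$. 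Moreover, using Definition \ref{defi1parab} together with $\overline U\subset\Omega$ in the SPS case (resp.\ directly from $P(f)\cap U\subset V$ and $P(f)\cap\partial U=\{p\}$ in the PS case), all points of $P(f)$ in $D(p,\varepsilon_0)$ other than $p$ lie in $V\subset U$, hence in the attracting petals of $f$ — this localisation is the crux of the whole proof.

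Away from $p$ I build $\mathcal N$ as in the \ref{technical-lemma} (resp.\ \ref{technical-lemma2}): take $V$ as in Definition \ref{defi1parab} with $\infty$ accessible from $V$, and let $\Omega_C'$ be the connected component of $\mathbb{C}\smallsetminus\overline V$ (resp.\ of $\Omega\smallsetminus\overline V$) containing $C\smallsetminus\{p\}$ — which makes sense since $\overline V\cap\partial U=\{p\}$, so $C\smallsetminus\{p\}$ is disjoint from $\overline V$. As in the proof of the \ref{technical-lemma}, $\Omega_C'$ is simply connected, $\Omega_C'\cap U$ is connected, $\varphi^{-1}(\Omega_C'\cap U)$ contains a crosscut neighbourhood of $1$, and $\Omega_C'\cap U$ is disjoint from $P(f)$ (resp.\ $\Omega_C'\subset W\coloneqq\mathbb{C}\smallsetminus P(f)$), so all branches of $f^{-1}$ are single-valued on $\Omega_C'\cap U$ (resp.\ on $\Omega_C'$). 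I set $\mathcal N\coloneqq\Omega_C'\cup D(p,\varepsilon_0)$ and let $F$ be the branch obtained by analytically continuing the local inverse fixing $p$; $F$ is single-valued on $\mathcal N\cap U$ because on the overlap $\Omega_C'\cap D(p,\varepsilon_0)\cap U$, which sits inside attracting petals of $f$ near $p$, it agrees in each component with the local inverse fixing $p$, while $\Omega_C'\cap U$ is simply connected. Finally $F(\mathcal N\cap U)\subset U$, since this set is connected, contained in $\mathcal F(f)$ by total invariance, and meets $U$ near $p$ through $F(\mathcal P^+)$. The main obstacle is precisely this step: because $p\in P(f)\cap\partial U$ the postsingular set touches $C$ at $p$, so a single application of the \ref{technical-lemma} does not give an inverse branch near $C$, and one must glue with the local inverse at $p$ using $f'(p)\neq0$ and the confinement of $P(f)$ to the attracting petals.

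\emph{First assertion.} With $\mathcal N$ and $F$ as above, $\varphi^{-1}(\mathcal N\cap U)$ contains a crosscut neighbourhood of $1$, the map $G\coloneqq\varphi^{-1}\circ F\circ\varphi$ is a branch of $g^{-1}$ fixing $1$, and $G(\varphi^{-1}(\mathcal N\cap U))=\varphi^{-1}(F(\mathcal N\cap U))$ again contains a crosscut neighbourhood of $1$. Exactly as in the proof of part (a) of \ref{teo-C-llarg}, this forces a sufficiently small crosscut neighbourhood $\mathbb{D}_D$ of $1$ to be mapped conformally by $g$ onto a proper crosscut neighbourhood of $1$, so $\overline{g(\mathbb{D}_D)}\neq\mathbb{D}$; since a singularity of $g$ has cluster set $\overline{\mathbb{D}}$ \cite[Thm.~II.6.6]{garnett}, it follows that $1\notin\textrm{Sing}(g)$.

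\emph{Second assertion.} Assume $U$ is SPS. Then $\Omega_C'\subset W$, and near $p$ the set $\mathcal N\smallsetminus\{p\}$ lies in the repelling petals of $f$ (equivalently, the attracting petals of $F$), which by the first paragraph avoid $P(f)$; hence $\mathcal N\smallsetminus\{p\}\subset W$. Consequently all iterates $F^n$ are well-defined on $\mathcal N$ — near $p$ because $F$ maps the attracting-for-$F$ petal neighbourhood strictly into itself, and away from $p$ because $W$ is backwards invariant — and $\{F^n\}$ is normal on $\mathcal N$ with $F^n\to p$ uniformly on compacts. One also checks, as in the \ref{technical-lemma2}, that $f$ is properly invertible at every $z\in C$ with respect to $U$. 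With these ingredients the proof of \ref{teo:D} {\em \ref{MT2}} goes through essentially verbatim: $f(C)\subset C$; given $z\in C\smallsetminus\{p\}$ and a compact $K\subset C$, one builds a forward-$F$-invariant simply connected $\Omega'\subset\mathcal N$ with $\{p\}\cup K\subset\Omega'$, uses that $F|_{\Omega'}$ is not an automorphism to get strict Schwarz--Pick contraction towards $p$, produces $\lambda_M>1$ on the compact $\overline{D_{\Omega'}(p,M)}$ with $M=\max_{w\in K}\rho_{\Omega'}(p,w)$ such that $\lambda_M\,\rho_{\Omega'}(p,w)\leq\rho_{\Omega'}(p,f(w))$ whenever $w,f(w)\in\overline{D_{\Omega'}(p,M)}$, and concludes from $\rho_{\Omega'}(p,f^n(z))\geq\lambda_M^n\,\rho_{\Omega'}(p,z)$ that $f^n(z)$ leaves every compact subset of $C$; since $C$ is unbounded and $f(C)\subset C$, this gives $z\in\mathcal I(f)$, i.e.\ $Cl_\mathbb{C}(\varphi,1)\smallsetminus\{p\}\subset\mathcal I(f)$.
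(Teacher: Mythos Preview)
Your overall strategy—construct a neighbourhood of $C=Cl_\mathbb{C}(\varphi,1)$ on which the branch $F$ of $f^{-1}$ fixing $p$ is well-defined, then feed this into the proofs of \ref{teo-C-llarg}(a) and \ref{teo:D}{\em\ref{MT2}}—matches the paper's. The construction itself, however, is genuinely different, and yours has a gap.

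The paper does not glue. Instead it observes that one only needs to avoid the singularities of the \emph{specific} branch $F_1$ fixing $p$, and introduces
\[
SV(f,p)\coloneqq\{v\in\mathbb{C}\colon v\text{ is a singularity for }F_1\}.
\]
Since $f'(p)\neq 0$, $F_1$ is defined on a disk $D(p,r)$, so $SV(f,p)\cap D(p,r)=\emptyset$; and of course $SV(f,p)\subset P(f)$, hence $SV(f,p)\cap U\subset V$. Thus $SV(f,p)\cap U$ does not accumulate at any point of $C$ (in particular not at $p$), and the construction of \ref{technical-lemma} applied with $SV(f,p)$ in place of $P(f)$ produces directly a simply connected $\Omega_p$ with $C\subset\Omega_p$, $\Omega_p\cap U$ connected, and $F_1$ single-valued on $\Omega_p\cap U$ (resp.\ on $\Omega_p$ in the SPS case). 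No gluing, no monodromy discussion.

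Your construction, by contrast, takes $\Omega_C'$ to be a component of $\mathbb{C}\smallsetminus\overline V$ and then sets $\mathcal N=\Omega_C'\cup D(p,\varepsilon_0)$. Two issues arise. First, you assert that $C\smallsetminus\{p\}$ lies in a single component of $\mathbb{C}\smallsetminus\overline V$; this is plausible from petal geometry but is not justified, and removing the point $p$ from the continuum $C$ may well disconnect it locally. Second, and more seriously, the claim that $\varphi^{-1}(\Omega_C'\cap U)$ contains a crosscut neighbourhood of $1$ is not correct as stated: since $p\in\overline V$ but $p\notin\Omega_C'$, the boundary curves of $V$ that bound $\Omega_C'$ inside $U$ land at $p$, hence their $\varphi^{-1}$-images land at $1$, so the region $\varphi^{-1}(\Omega_C'\cap U)$ is pinched at $1$ rather than containing a full crosscut neighbourhood there. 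The argument in \ref{technical-lemma} you invoke uses crucially that $\Omega_C$ is a neighbourhood of \emph{all} of $C$, which fails for your $\Omega_C'$. You may be able to rescue the crosscut claim for $\mathcal N$ itself (once $D(p,\varepsilon_0)$ is adjoined) and to verify single-valuedness of $F$ on $\mathcal N\cap U$, but this requires controlling the singular values of $f$ in the petal $D(p,\varepsilon_0)\cap U\subset\mathcal N\cap U$, which may contain points of $V$; the paper's device of replacing $P(f)$ by $SV(f,p)$ is precisely what makes this step automatic.
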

\begin{proof}
	In the sequel, we let  $ F_1 $ be the branch of $ f^{-1} $ defined in $ D(p,r) $, $ r>0 $, such that $ F_1(p)=p $. We shall prove the existence of a domain $ \Omega_p $ analogous to the one of  \ref{technical-lemma}. 
	
	Indeed, the construction of the domain $ \Omega_p $ follows the procedure of {\ref{technical-lemma}}, applied not to $ P(f) \cap U$, but to the following set of singular values
	\[SV(f, p)=\left\lbrace v\in\mathbb{C}\colon v\textrm{ is a singularity for }F_1 \right\rbrace .\]
	We note that $ SV(f, p)\subset P(f) $, and $ SV(f, p)\cap D(p,r)=\emptyset $. Therefore, $ SV(f, p) \cap U$ does not accumulate at any point of $ Cl_\mathbb{C}(\varphi, 1)  $, and the  arguments of {\ref{technical-lemma}}  apply. 
	
	Note that $ \Omega_p $ is a simply connected domain with $ Cl_\mathbb{C}(\varphi, 1)  \subset \Omega_p$, and $  \Omega_p\cap U$ connected and disjoint from $ SV(f, p)\cap U $. Hence, $ F_1 $ is well-defined in $  \Omega_p\cap U$,  and we apply the same arguments as in  \ref{teo-C-llarg} to the function $ F_1|_{\Omega_p\cap U} $ to prove  that 1 is not a singularity for the associated inner function. 
	
	In the SPS case, $ \Omega_p $ is disjoint from  $ SV(f, p) $, and hence $ F_1 $ is well-defined in $  \Omega_p$. To prove that every point in the cluster set is escaping (except for the parabolic point), we  follow the proof of \ref{teo:D} {\em \ref{MT2}}, considering $ \Omega_p $ as defined above.
\end{proof}

%\begin{remark}{\bf (On the definition of topologically hyperbolic parabolic basin)}	Note that the previous lemma is sharp in the sense that any other branch of $ f^{-1} $ may fail to be defined in a neighbourhood of $ Cl_\mathbb{C}(\varphi, 1) $. As an example, consider the map $ f(z)=ze^{-z} $ \cite{BakerDominguez99, Fagella-Jové}, for which 0 is a parabolic fixed point and an asymptotic value. The only branch of $ f^{-1} $ defined on a neighbourhood of $ 0 $, and hence in a neighbourhood of $ Cl_\mathbb{C}(\varphi, 1) $ is the one fixing 0. Another possible (more restrictive) approach is to allow the parabolic fixed point $ p $ only to be an accumulation point of $ P(f)\cap U $, not a singular value itself. However, this is not the approach we followed, since  the less restrictive definitions \ref{defi1parab} and \ref{defi2parab} are enough to prove \ref{teo:C'} and \ref{teo:D'}.\end{remark}

\printbibliography

\end{document}